\newcommand{\wkl}{{\sf{WKL}}_0}
\newcommand{\rca}{{\sf{RCA}}_0}
\newcommand{\aca}{{\sf{ACA}}_0}
\newcommand{\seq}[2]{{\langle #1 _ {#2} \rangle_{#2 \in \nat}}}
\newcommand{\nat}{{\mathbb N}}
\newcommand{\rat}{{\mathbb Q}}
\newcommand{\lth}{{\text{lh}}}
\newcommand{\nor}[1]{{\sf { NOR{#1}}}}
\newcommand{\gal}{{\sf {GAL}}}
\newcommand{\gfp}{{\sf{GF}}(p)}
\newcommand{\gfpn}{{\sf{GF}}(p^n)}
\newcommand{\clo}{\overline}
\newcommand{\la}{\langle}
\newcommand{\ra}{\rangle}
\newcommand{\Nb}{\mathbb{N}}
\newcommand{\Qb}{\mathbb{Q}}
\newcommand{\bQb}{\clo{\mathbb{Q}}}
\newcommand{\imp}{\rightarrow}
\newcommand{\biimp}{\leftrightarrow}
\newcommand{\Aut}{\mathop{\mathrm{Aut}}\nolimits}
\newcommand{\Sym}{\mathop{\mathrm{Sym}}\nolimits}
\DeclareMathOperator{\ran}{ran}
\theoremstyle{plain}
\newtheorem{thm}{Theorem}
\newtheorem{cor}[thm]{Corollary}  
\newtheorem{lemma}[thm]{Lemma}
\newtheorem*{lemma*}{Lemma}
\newtheorem*{thm*}{Theorem}
\theoremstyle{definition}
\newtheorem{defn}[thm]{Definition}
\newtheorem{question}[thm]{Question}
\begin{document}             

\title{Reverse Mathematics and Algebraic Field Extensions}
\author{
Fran\c{c}ois G. Dorais
\and 
Jeffry Hirst
\and
Paul Shafer
}
\date{September 3, 2012\\{\small(Revised May 10, 2013)}}
\maketitle

\begin{abstract}
This paper analyzes theorems about algebraic field extensions using the techniques of reverse mathematics.
In section \S\ref{sectionauto}, we show that $\wkl$ is equivalent to the ability to extend $F$-automorphisms of field extensions to
automorphisms of $\clo F$, the algebraic closure of $F$.  Section \S\ref{sectioniso} explores finitary conditions for embeddability.
Normal and Galois extensions are discussed in section \S\ref{sectionnorm}, and the Galois
correspondence theorems for infinite field extensions are treated in section \S\ref{sectiongalcor}.
\end{abstract}

Reverse mathematics is a foundational program in which mathematical theorems are analyzed
using a hierarchy of subsystems of second order arithmetic.  This paper uses three such subsystems.  The
base system $\rca$ includes $\Sigma^0_1$-$\sf{IND}$ (induction for $\Sigma^0_1$ formulas) and set comprehension
for $\Delta^0_1$ definable subsets of $\nat$.  The stronger system $\wkl$ appends K\"o{}nig's theorem restricted
to binary trees (subtrees of $2^{<\Nb}$).  The even stronger system $\aca$ adds comprehension for
arithmetically definable subsets of $\nat$.  For a detailed formulation of these subsystems and related analysis
of many mathematical theorems, see Simpson's book \cite{simpson}. 

Reverse mathematics of countable algebra, including topics from group theory, ring theory, and field theory, can be found in the paper of
Friedman, Simpson, and Smith \cite{fss}.  Further discussion appears throughout Simpson's book \cite{simpson}.  A field
is a set of natural numbers with operations and constants satisfying the field axioms.  Field embeddings and isomorphisms
can be defined as sets of (codes for) ordered pairs of field elements.  Polynomials can be encoded by finite
strings of coefficients, so polynomial rings are sets of (codes for) finite strings, with related ring operations.
For details pertaining to any of these definitions, see either of the references above.

Our study of fields begins in the next section with the definition of an algebraic field extension.  To simplify the exposition in
sections \S\ref{sectionext} through \S\ref{sectioniso}, we restrict our discussion to characteristic 0 fields.
Consequently, in these sections all irreducible polynomials are separable.  We indicate how to extend
results of earlier sections to fields of other characteristics in section \S\ref{sectionotherchar}.

\section {Algebraic extensions and algebraic closures}\label{sectionext}
We provide a definition of algebraic field extension in the context of second order arithmetic and give a few examples of fields and extensions which $\rca$ proves exist.
Our definition of an algebraic extension extends the
definition of algebraic closure in Simpson's book \cite{simpson}*{Definition II.9.2}.
The definition uses the following notational shorthand.
Given a field $F$, $a\in F$, $f(x)= \sum_{i \in I} c_i x^i$ a polynomial in $F[x]$, and $\varphi$ a field embedding of $F$,
we write $\varphi (f) = \sum_{i \in I} \varphi(c_i)x^i$ and $\varphi (f)(a) = \sum_{i \in I} \varphi(c_i)a^i$.

\begin{defn} $(\rca )$\label{defnae}\label{defnac}
An {\sl algebraic extension} of a countable field $F$ is a pair $\langle K , \varphi \rangle$ where
$K$ is a countable field,
$\varphi$ is an embedding of $F$ into $K$, and for every $a \in K$ there is a nonzero $f(x) \in F[x]$ such that $\varphi(f)(a) = 0$.
When appropriate, we drop the mention of $\varphi$ and denote the extension by $K$ alone.

If $K$ is an algebraic extension of $F$ that is algebraically closed, we say $K$ is an {\sl algebraic closure} of $F$, and often write $\clo F$ for $K$.
\end{defn}

$\rca$ can prove the existence of algebraic closures, as shown in Theorem 2.5 of Friedman, Simpson, and Smith \cite{fss}.
However, the notation $\clo F$ in the preceding definition is somewhat misleading, since $\rca$ does not
prove the uniqueness of algebraic closures up to isomorphism.  To be specific, Theorem 3.3 of Friedman, Simpson, and Smith \cite{fss}
shows  that the statement ``for every field $F$, the algebraic closure of $F$ is unique up to isomorphism'' is equivalent
to $\wkl$.
As for other algebraic extensions, we often drop $\varphi$ and simply
denote an algebraic closure by $\clo F$.

In order to describe the images of fields under embeddings, Friedman, Simpson, and Smith \cite{fss} introduce
the notion of a $\Sigma^0_1$-subfield.

\begin{defn}$(\rca )$\label{defnssf}
Suppose $K$ is a countable field.  A $\Sigma^0_1$ formula $\theta (x)$
defines a $\Sigma^0_1${\sl-subfield} of $K$ if
\begin{enumerate}
\item  $\forall x ( \theta (x) \to x \in K )$,
\item  $\theta ( 0_K)$ and $\theta (1_K )$ (where $0_K$ and $1_K$ are the additive and multiplicative identities of $K$), and
\item  $\forall x \forall y ((\theta (x) \land \theta (y ) ) \to
(\theta (x+y) \land \theta (x-y ) \land \theta (x\cdot y )\land \theta(x / y ) ))$.
\end{enumerate}
Additionally, if $\langle K , \varphi \rangle$ is an algebraic extension of $F$ and $ \theta (\varphi (x))$ for all $x \in F$, we say $\theta (x)$ defines a
$\Sigma^0_1$-$F$-subfield of $K$.
\end{defn}

As noted by Friedman, Simpson, and Smith \cite{fss}, $\rca$ proves that every $\Sigma^0_1$-subfield is the isomorphic image of some field.  The
following transport of structure lemma shows that every $\Sigma^0_1$-$F$-subfield of an algebraic extension of $F$ is the isomorphic image of some algebraic extension
of $F$.  The lemma simplifies the construction of a wide variety of useful field extensions in $\rca$.

\begin{lemma}$(\rca )$\label{sigma}
If $\langle K , \varphi \rangle$ is an algebraic extension of $F$ and $\theta (x)$ defines  a $\Sigma^0_1$-$F$-subfield of $K$, then there is an algebraic
extension $\langle G , \psi \rangle$ of $F$ and an embedding $\tau$ of $G$ into $K$ such that
\begin{enumerate}
\item  $(\forall x \in F) ( \varphi (x) = \tau ( \psi (x)))$, and
\item  $(\forall x \in K )(\theta (x) \leftrightarrow (\exists y \in G)( \tau (y) = x))$.
\end{enumerate}
\end{lemma}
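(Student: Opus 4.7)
The plan is to perform a transport of structure: build $G$ as a $\Delta^0_1$-coded proxy for the $\Sigma^0_1$-subfield of $K$ defined by $\theta$, and then pull the field operations back through the resulting bijection. The subfield itself need not be $\Delta^0_1$, so it cannot be taken directly as a coded field in $\rca$; introducing $G$ as a set of witnesses circumvents this.

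First I will rewrite $\theta(x)$ as $(\exists s)\theta_0(x,s)$ for some $\Sigma^0_0$ formula $\theta_0$ and set
$$G = \{\la x,s\ra : \theta_0(x,s) \andd (\forall \la y,t\ra < \la x,s\ra)(\theta_0(y,t) \imp y \neq x)\}.$$
Since the defining condition uses only bounded quantifiers, $G$ is a set in $\rca$. The map $\tau : G \to K$ given by $\tau(\la x,s\ra)=x$ is an injection with range exactly $\{x \in K : \theta(x)\}$, by the minimality clause in the definition of $G$.

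Next I will install a field structure on $G$ by pullback. For $a,b \in G$, the element $\tau(a)+\tau(b) \in K$ lies in the subfield since $\theta$ defines a subfield, so exactly one $c \in G$ satisfies $\tau(c) = \tau(a)+\tau(b)$; I set $a +_G b = c$, and similarly for the other operations and the constants $0_G, 1_G$. Each such operation is total and $\Sigma^0_1$-definable via an unbounded search for a witness to $\theta$, hence $\Delta^0_1$ and a genuine function in $\rca$. The field axioms then transfer along the injection $\tau$. I define $\psi : F \to G$ by letting $\psi(y)$ be the unique $c \in G$ with $\tau(c)=\varphi(y)$; this is well-defined and an embedding because $\theta(\varphi(y))$ for all $y \in F$, and it satisfies $\tau \circ \psi = \varphi$, which is conclusion (1). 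Conclusion (2) is immediate from $\ran(\tau) = \{x \in K : \theta(x)\}$.

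For the algebraic extension property, given $c \in G$ I fix a nonzero $f \in F[x]$ with $\varphi(f)(\tau(c)) = 0$, which exists because $\langle K, \varphi\rangle$ is algebraic over $F$. Since $\tau$ is a field embedding, $\tau(\psi(f)(c)) = \varphi(f)(\tau(c)) = 0 = \tau(0_G)$, and injectivity of $\tau$ forces $\psi(f)(c) = 0_G$, so $\langle G, \psi\rangle$ is algebraic over $F$. The one point deserving care is the verification that the pulled-back operations are $\Delta^0_1$, and this is exactly where the $\Sigma^0_1$-form of $\theta$ is genuinely used: it reduces to the standard observation that a total $\Sigma^0_1$-definable function is $\Delta^0_1$ in $\rca$. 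Everything else is formal bookkeeping.
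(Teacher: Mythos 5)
Your proof is correct and takes essentially the same transport-of-structure route as the paper: the paper enumerates the (infinite) $\Sigma^0_1$-subfield injectively by a map $\tau:\nat\to K$ and pulls the operations back (treating the finite case separately), while you build $G$ as the $\Delta^0_1$ set of least witness pairs, which handles finite and infinite subfields uniformly. In both versions the operations and $\psi$ exist by $\Delta^0_1$ comprehension (totality coming from the subfield and $F$-subfield clauses of $\theta$), and the remaining verifications, including algebraicity of $\langle G,\psi\rangle$ over $F$, are routine.
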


\begin{proof}
If the subfield defined by $\theta$ is finite, then the theorem is trivial.
Let $\langle K , \varphi \rangle$ be an algebraic extension of $F$ and suppose $\theta$ defines an infinite $\Sigma^0_1$-$F$-subfield of $K$.  Since $\theta$
is a $\Sigma^0_1$ formula, $\rca$ proves the existence of an injective function $\tau : \nat \to K$ that enumerates all those elements of $K$ for which $\theta$
holds.  Without loss of generality, we may assume that $\tau ( 0 ) = 0_K$ and $\tau (1) = 1_K$.  Define field operations $+$ and $\cdot$ on $\nat$ by
$i+j = \tau^{-1}( \tau (i) + \tau (j))$ and
$i\cdot j =\tau^{-1}(\tau (i) \cdot \tau (j))$.
Let $G$ denote $\nat$ with these operations.  Define $\psi : F \to G$ by letting
$\psi (x) = \tau^{-1} ( \varphi (x) )$ for each $x \in F$.  Since $\theta$ defines a $\Sigma^0_1$-$F$-subfield of $K$, $\rca$ proves that $\psi$ and the
field operations of $G$ all exist and are all total.  Routine verifications show that $\langle G , \psi \rangle$ and $\tau$ satisfy the conclusions of the theorem.
\end{proof}

In later constructions, it is convenient to have ready access to familiar field extensions of $\rat$.
Working in $\rca$, we can fix a representation of $\rat$, for example that in Theorem II.4.2 of Simpson \cite{simpson}.
By Theorem 2.5 of Friedman, Simpson, and Smith \cite{fss}, we can find $\clo \rat$, an algebraic closure of $\rat$.
As a concrete example of a specific extension, we can locate the first element of $\clo \rat$ satisfying $x^2 -2 = 0$,
and denote it by $\sqrt{2}$.
The collection of terms of the form $q_0 + q_1 \sqrt{2}$ with $q_0  , q_1 \in \rat$
is a $\Sigma^0_1$-subfield of $\clo \rat$.  By Lemma \ref{sigma}, $\rca$ proves that there is an algebraic extension
of $\rat$ that is isomorphic to this $\Sigma^0_1$-subfield;  we denote it by $\rat(\sqrt{2} )$.
In the minimal model of $\rca$ consisting of $\omega$ and the computable sets, this field is a computable presentation
of $\rat ( \sqrt{2} )$; in this case, an algebraist might say it {\emph {is}} $\rat (\sqrt{2})$.
Similarly, for any sequence $\langle \alpha_i \mid i \in \nat \rangle$
of elements in $\clo \rat$, $\rca$ proves the existence of the algebraic extension $\rat( \alpha_i \mid i \in \nat )$.
If we like, we can apply Theorem 2.12 of \cite{fss}, take the algebraic closure of the real closure of $\rat$, and
adjoin a real (or non-real) cube root of $2$ to $\rat$ in the same fashion.
Similar constructions can be carried out over other base fields.

Besides proving the existence of all these field extensions, $\rca$ can prove many useful results about them.  The following two examples play an important
role in the next section.

\begin{lemma}\label{roththm}
$(\rca )$
Let $p _1 , \dots , p _n$ and $q_1 , \dots , q_r$ be disjoint lists of distinct primes.  Then $\sqrt {q_1 \dots q_r} \notin \rat (\sqrt {p_1} , \dots , \sqrt {p_n} )$.
\end{lemma}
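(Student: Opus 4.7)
The plan is to argue by induction on $n$, with $r$, the primes, and the target element all universally quantified inside the inductive statement. Since $\rat(\sqrt{p_1}, \ldots, \sqrt{p_n})$ is explicitly a $\Sigma^0_1$-subfield of $\clo\rat$, the non-membership $\sqrt{q_1 \cdots q_r} \notin \rat(\sqrt{p_1}, \ldots, \sqrt{p_n})$ is the negation of a $\Sigma^0_1$ formula, hence $\Pi^0_1$. The $\Sigma^0_1$-induction available in $\rca$ therefore suffices.

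For the base case $n = 0$, the claim reduces to the irrationality of $\sqrt{q_1 \cdots q_r}$. The standard argument works in $\rca$: writing $\sqrt{q_1 \cdots q_r} = a/b$ in lowest terms yields $a^2 = b^2 q_1 \cdots q_r$, so $q_1 \mid a$ and then $q_1 \mid b$, contradicting $\gcd(a,b) = 1$.

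For the inductive step, I would first apply the hypothesis to the singleton list $p_{n+1}$ (disjoint from $p_1, \ldots, p_n$ by distinctness) to obtain $\sqrt{p_{n+1}} \notin \rat(\sqrt{p_1}, \ldots, \sqrt{p_n})$. Hence $x^2 - p_{n+1}$ is the minimal polynomial of $\sqrt{p_{n+1}}$ over the smaller field, and every element of $\rat(\sqrt{p_1}, \ldots, \sqrt{p_{n+1}})$ has a unique representation $a + b\sqrt{p_{n+1}}$ with $a, b \in \rat(\sqrt{p_1}, \ldots, \sqrt{p_n})$. Assuming for contradiction that $\sqrt{q_1 \cdots q_r} = a + b\sqrt{p_{n+1}}$, squaring and invoking uniqueness yields $2ab = 0$ together with $a^2 + b^2 p_{n+1} = q_1 \cdots q_r$. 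If $b = 0$, then $a = \pm\sqrt{q_1 \cdots q_r}$ sits in $\rat(\sqrt{p_1}, \ldots, \sqrt{p_n})$, contradicting the hypothesis. If $a = 0$, then $(b p_{n+1})^2 = p_{n+1} q_1 \cdots q_r$, so $\sqrt{p_{n+1} q_1 \cdots q_r} = \pm b p_{n+1}$ sits in the smaller field; since $p_{n+1}, q_1, \ldots, q_r$ form a list of $r+1$ distinct primes disjoint from $p_1, \ldots, p_n$, this also contradicts the hypothesis.

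The main obstacle is justifying the unique-representation step inside $\rca$. Linear independence of $1$ and $\sqrt{p_{n+1}}$ over $\rat(\sqrt{p_1}, \ldots, \sqrt{p_n})$ is immediate from the inductive hypothesis, since any nontrivial dependence would place $\sqrt{p_{n+1}}$ in the smaller field. Existence of the representation amounts to the observation that every element of the larger $\Sigma^0_1$-subfield is a rational function in $\sqrt{p_{n+1}}$ with coefficients in the smaller field, which reduces via $(\sqrt{p_{n+1}})^2 = p_{n+1}$ and rationalization of denominators to the form $a + b\sqrt{p_{n+1}}$. This is elementary finite-dimensional bookkeeping, well within $\rca$.
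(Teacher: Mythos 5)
Your proof is correct and follows essentially the same route as the paper: the authors defer to Roth's argument, which (as they spell out in the generalization, Lemma \ref{rothalt}) is exactly your induction on $n$ of a $\Pi^0_1$ statement with all lists quantified inside, the same base case, and the same $a+b\sqrt{p_{n+1}}$ representation with the cases $ab\neq 0$, $b=0$, $a=0$ each contradicting the inductive hypothesis. Your remarks on the $\Pi^0_1$ complexity and the unique-representation bookkeeping match the justification the paper gives via Corollary II.3.10 of Simpson, so nothing further is needed.
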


\begin{proof}
This is a formalization of the main theorem in the paper of Roth \cite{roth}.  His argument is essentially an application of $\Pi^0_1$-$\sf{IND}$, which is
provable in $\rca$ by Corollary II.3.10 of Simpson \cite{simpson}.  For a sketch of a generalization of this result to fields other than $\rat$, see Lemma \ref{rothalt} in section \S\ref{sectionotherchar}.
\end{proof}

\begin{lemma}\label{rothcoro}
$(\rca )$
Let $p _1 , \dots , p _n$ and $q_1 , \dots , q_r$ be disjoint lists of distinct primes.  Then $\sqrt {q_1} \notin \rat (\sqrt {p_1} , \dots , \sqrt {p_n}, \sqrt {q_1 q_2} ,\dots ,\sqrt{q_1 q_r} )$.
\end{lemma}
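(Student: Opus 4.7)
The plan is to induct on $r$ and reduce each step to Roth's theorem (Lemma~\ref{roththm}). The base case $r=1$ is literally Lemma~\ref{roththm} applied to $p_1,\dots,p_n$ and the single prime $q_1$, since then the list $\sqrt{q_1 q_2},\dots,\sqrt{q_1 q_r}$ is empty. The induction is formally valid in $\rca$ via $\Pi^0_1$-$\sf{IND}$ (as in the proof of Lemma~\ref{roththm}), because ``$\sqrt{q_1}$ is not in the image of the embedding of $K$ into $\clo{\rat}$'' is $\Pi^0_1$ in the finite tuple of prime parameters.

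For the inductive step, I would set $K = \rat(\sqrt{p_1},\dots,\sqrt{p_n},\sqrt{q_1 q_2},\dots,\sqrt{q_1 q_r})$ and $K' = \rat(\sqrt{p_1},\dots,\sqrt{p_n},\sqrt{q_1 q_2},\dots,\sqrt{q_1 q_{r-1}})$, so that $K = K'(\sqrt{q_1 q_r})$. If $\sqrt{q_1 q_r} \in K'$ then $K = K'$ and the inductive hypothesis finishes the argument. Otherwise, $\{1,\sqrt{q_1 q_r}\}$ is a $K'$-basis of $K$, so writing $\sqrt{q_1} = a + b\sqrt{q_1 q_r}$ with $a,b \in K'$ and squaring gives $q_1 = a^2 + b^2 q_1 q_r + 2ab\sqrt{q_1 q_r}$. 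The $K'$-linear independence of $\{1,\sqrt{q_1 q_r}\}$ forces $2ab = 0$; the sub-case $b = 0$ yields $\sqrt{q_1} = a \in K'$, directly contradicting the inductive hypothesis.

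The main obstacle is the remaining sub-case $a = 0$, which I would dispatch by a second application of Roth. From $\sqrt{q_1} = b\sqrt{q_1 q_r}$ one computes $b = 1/\sqrt{q_r}$, so $b \neq 0$ and $\sqrt{q_r} = 1/b \in K'$. Since $\sqrt{q_1 q_i} \in \rat(\sqrt{q_1},\sqrt{q_i})$ for each $2 \le i \le r-1$, we obtain the inclusion $K' \subseteq \rat(\sqrt{p_1},\dots,\sqrt{p_n},\sqrt{q_1},\dots,\sqrt{q_{r-1}})$. But Lemma~\ref{roththm}, applied to the disjoint prime lists $p_1,\dots,p_n,q_1,\dots,q_{r-1}$ and $q_r$, says $\sqrt{q_r}$ is not in the larger field, producing the contradiction that closes the induction.
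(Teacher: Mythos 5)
Your argument is correct, but it follows a different route from the paper. The paper's proof is a single direct manipulation: assuming $\sqrt{q_1}$ lies in the field, it expands $\sqrt{q_1}$ as a $\rat$-linear combination of products of the generators, splits the monomials according to whether $\sqrt{q_1}$ occurs an even or odd number of times (using $\sqrt{q_1 q_i}=\sqrt{q_1}\sqrt{q_i}$) to get $\sqrt{q_1}=\alpha+\beta\sqrt{q_1}$ with $\alpha,\beta\in\rat(\sqrt{p_1},\dots,\sqrt{p_n},\sqrt{q_2},\dots,\sqrt{q_r})$, and then derives two contradictions with Lemma~\ref{roththm} (from $\beta\neq 1$ and from $\beta=1$ by solving for some $\sqrt{q_i}$). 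You instead run an external induction on $r$, peeling off the single generator $\sqrt{q_1 q_r}$ and exploiting that $K=K'(\sqrt{q_1 q_r})$ is at worst a quadratic extension with basis $\{1,\sqrt{q_1 q_r}\}$; the two sub-cases $b=0$ and $a=0$ are then dispatched by the inductive hypothesis and by a clean second application of Lemma~\ref{roththm} via the inclusion $K'\subseteq\rat(\sqrt{p_1},\dots,\sqrt{p_n},\sqrt{q_1},\dots,\sqrt{q_{r-1}})$. Both approaches bottom out in Lemma~\ref{roththm}; yours trades the paper's global even/odd bookkeeping over arbitrary monomials for a sequence of elementary degree-two steps, at the modest cost of an extra appeal to $\Pi^0_1$-$\sf{IND}$ (harmless in $\rca$, and your observation that non-membership in a $\Sigma^0_1$-subfield is $\Pi^0_1$ justifies it), plus a classical case split on whether $\sqrt{q_1 q_r}\in K'$, which is unproblematic since you are proving a statement rather than constructing an object. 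Minor point: since only designated square roots are fixed in $\clo\rat$, identities like $\sqrt{q_1 q_r}=\sqrt{q_1}\sqrt{q_r}$ hold only up to sign, but this affects nothing, as field membership is preserved under negation (e.g., from $\sqrt{q_1}=b\sqrt{q_1 q_r}$ one gets $(1/b)^2=q_r$, so some square root of $q_r$ lies in $K'$, which already contradicts Lemma~\ref{roththm}).
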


\begin{proof}
Suppose $p _1 , \dots , p _n$ and $q_1 , \dots , q_r$ are as specified and the lemma fails.  Write $\sqrt {q_1 }$ as a linear combination of
products of elements of $\{ \sqrt {p_1} , \dots , \sqrt {p_n}, \sqrt {q_1 q_2} ,\dots ,\sqrt{q_1 q_r} \}$ with coefficients in $\rat$.  Separating the summands in which $\sqrt {q_1}$ appears an
even number of times from those in which it appears an odd number of times, we may write $\sqrt {q_1} = \alpha + \beta \sqrt{q_1}$ where $\alpha$ and $\beta$
are elements of $F = \rat (\sqrt{p _1} , \dots , \sqrt{p _n}, \sqrt{q_2} , \dots , \sqrt{q_r })$ and $\beta$ contains some $\sqrt {q_i }$ for $2 \le i \le r$.
Since $\beta \neq 1$ implies $\sqrt{q _1 } \in F$,
contradicting Lemma \ref{roththm}, we must have $\beta = 1$.  Since $\beta$ contains some $\sqrt{q_i }$ for $2 \le i \le r$, we can separate and solve for
$\sqrt{q_i }$, showing that $\sqrt{q_i } \in \rat ( \sqrt {p_1} , \dots , \sqrt {p_n} , \sqrt{q_2 } , \dots , \sqrt{q_{i-1} } , \sqrt {q_{i+1}} , \dots , \sqrt {q_r} )$, again
contradicting Lemma \ref{roththm}.  Thus, the lemma must hold.
\end{proof}

\section{Extensions of isomorphisms}\label{sectionauto}

We analyze the strength required to extend an isomorphism between two fields to an isomorphism between their algebraic closures.  If $K$ and $J$ are isomorphic fields, then the isomorphism extends to an isomorphism of $\clo K$ and $\clo J$.  This type of extension can be
used to show that if $F$ is not algebraically closed, then there is an automorphism of $\clo F$ that fixes $F$ but is not the identity.  As $F$ is not algebraically closed, there is a irreducible polynomial in $F[x]$ with distinct roots $\alpha$ and $\beta$ in $\clo F$.  The fields $F(\alpha)$ and $F(\beta)$ are isomorphic by an isomorphism that fixes $F$ and sends $\alpha$ to $\beta$, and this isomorphism extends to an automorphism of $\clo F$ that fixes $F$ but is not the identity.  We show that in general $\wkl$ is required to extend an isomorphism between two fields to their algebraic closures and to produce a nonidentity automorphism of $\clo F$ that fixes $F$ when $F$ is not algebraically closed.

\begin{defn}\label{defniso}
$(\rca )$
Suppose $\langle K, \varphi \rangle$ and $\langle J, \psi \rangle$ are
algebraic extensions of $F$.  We say $K$
{\sl is embeddable in $J$ over F}
(and write $K \preceq _F J$) if there is an embedding $\tau : K \to J$ such that for all
$x \in F$, $\tau (\varphi (x) ) = \psi (x)$.  We also say that $\tau$ {\sl fixes} $F$ and
call $\tau$ an $F$-embedding.
If $\tau$ is also bijective, we say $K$
{\sl is isomorphic to $J$ over $F$}, write $K \cong _F J$, and call $\tau$ an $F$-isomorphism.
\end{defn}

Informally, when $\langle K, \varphi \rangle$ and $\langle J, \psi \rangle$ are algebraic extensions of $F$, one identifies
$F$ both with its image in $K$ under $\varphi$ and also with its image  in $J$ under $\psi$.
Given such identifications, if $\tau$ fixes $F$ as in the
preceding definition, then $F$ is in the domain of $\tau$ and for all $x \in F$, $\tau(x)=x$.
In the formal setting, the preceding definition describes the relationship between $K$ and $J$
without asserting that $F$ is a subset of $K$ or $J$.  Similarly in the following definition,
the phrases ``$\theta$ extends $\tau$'' and ``$\theta$ restricts to $\tau$'' do not imply that $F$ is a subset of $K$ or that $G$ is
a subset of $H$.

\begin{defn}\label{defnext}
$(\rca )$
Suppose $\tau : F \to G$ is a field embedding, $\langle K, \varphi \rangle$ is an extension of $F$,
$\langle H, \psi \rangle$ is an extension of $G$, and $\theta :K \to H$ satisfies
$\theta (\varphi (v)) = \psi (\tau (v))$ for all $v \in F$.  Then we say $\theta$ {\sl extends}
$\tau$, $\theta$ {\sl is an  extension of} $\tau$, $\theta$ {\sl restricts to} $\tau$, and $\tau$ {\sl is a restriction of} $\theta$.
\end{defn}

Using the preceding definitions, we can formalize the following version of Theorem 1.8
of Hungerford \cite{hungerford} and prove it in $\rca$.

\begin{thm}\label{hungerford18}
$( \rca )$
If $\tau$ is an isomorphism from a field $F$ onto a field $G$ and
$\alpha \in \clo F$ is a root of an irreducible polynomial $p(x)$ of $F[x]$, then for any
root $\beta$ of $\tau (p)(x)$ in $\clo G$, there is an isomorphism of $F(\alpha )$
onto $G( \beta )$ which extends $\tau$.  In particular, taking $F = G$, we have that
if $p(x)$ is
an irreducible polynomial over $F$ with roots $\alpha$ and $\beta$ then
$F( \alpha ) \cong_F F(\beta )$.
\end{thm}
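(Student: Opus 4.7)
The plan is to mimic the standard proof that $F(\alpha) \cong F[x]/(p(x))$, while working directly inside $\clo F$ and $\clo G$, and invoking Lemma \ref{sigma} to turn $\Sigma^0_1$ descriptions into honest algebraic extensions. Set $n = \deg p$. Since $\tau$ is an isomorphism, its coefficient-wise extension to $F[x] \to G[x]$ is also an isomorphism, so $\tau(p)$ is irreducible of degree $n$ over $G$.

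First I would realize $F(\alpha)$ as an algebraic extension of $F$. Writing $\varphi : F \to \clo F$ and $\psi : G \to \clo G$ for the given embeddings, the $\Sigma^0_1$ formula
\[
\theta_\alpha(y) \;\equiv\; (\exists c_0, \dots, c_{n-1} \in F)\Big(y = \sum_{i<n} \varphi(c_i)\alpha^i\Big)
\]
defines a $\Sigma^0_1$-$F$-subfield of $\clo F$: closure under $+$ and $\cdot$ amounts to ordinary polynomial arithmetic modulo $p$, and closure under division follows from the extended Euclidean algorithm applied to $p$ and any nonzero representative. Lemma \ref{sigma} then produces an algebraic extension $\langle F(\alpha), \varphi_\alpha \rangle$ of $F$ together with an embedding $\iota_\alpha : F(\alpha) \to \clo F$ whose image is exactly the set defined by $\theta_\alpha$. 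The analogous construction, using $\tau(p)$ and $\beta$, yields $\langle G(\beta), \psi_\beta \rangle$ and $\iota_\beta : G(\beta) \to \clo G$.

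Next I would define the desired isomorphism $\sigma : F(\alpha) \to G(\beta)$ by
\[
\sigma(u) \;=\; \iota_\beta^{-1}\!\Big(\sum_{i<n} \psi(\tau(c_i))\beta^i\Big),
\]
where $(c_0, \dots, c_{n-1}) \in F^n$ is chosen so that $\iota_\alpha(u) = \sum_{i<n} \varphi(c_i)\alpha^i$. Verification that $\sigma$ is a well-defined $F$-isomorphism extending $\tau$ then breaks into routine pieces: uniqueness of the representation of $u$, which makes $\sigma$ a $\Delta^0_1$ total function in $\rca$; preservation of $+$ and $\cdot$, which amounts to the observation that the coefficient-wise action of $\tau$ on $F[x]$ intertwines reduction modulo $p$ with reduction modulo $\tau(p)$; bijectivity, where surjectivity uses that $\tau$ is onto and injectivity uses the analogous uniqueness of representation in $G(\beta)$; and the extension condition $\sigma(\varphi_\alpha(c)) = \psi_\beta(\tau(c))$ for $c \in F$, which is immediate from how Lemma \ref{sigma} defines $\varphi_\alpha$ and $\psi_\beta$.

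The main obstacle is the uniqueness of the representation $u = \sum_{i<n}\varphi(c_i)\alpha^i$, since both the well-definedness and the injectivity of $\sigma$ hinge on it. If $\sum_{i<n} \varphi(c_i)\alpha^i = 0$ with some $c_i \neq 0$, then $\alpha$ is a root of a nonzero $q(x) \in F[x]$ of degree $< n$; the Euclidean algorithm in $F[x]$ (primitive recursive, hence available in $\rca$) applied to $q$ and $p$ yields a gcd, which irreducibility of $p$ forces to be a unit, so B\'ezout produces $a, b \in F[x]$ with $1 = a(x)p(x) + b(x)q(x)$, and evaluation at $\alpha$ in $\clo F$ yields $1 = 0$, a contradiction. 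Once this uniqueness is secured, everything else is bookkeeping.
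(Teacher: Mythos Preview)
Your proposal is correct and follows essentially the same approach as the paper: construct $F(\alpha)$ and $G(\beta)$ via Lemma~\ref{sigma}, then define the isomorphism by applying $\tau$ to the coefficients of a polynomial representative and invoking irreducibility of $p$ for well-definedness. The only cosmetic difference is that you use the unique degree-$<n$ representative $\sum_{i<n}\varphi(c_i)\alpha^i$, whereas the paper uses the first quotient $\varphi(q)(\alpha_e)/\varphi(r)(\alpha_e)$ found by search; both reduce to the same irreducibility argument, and your choice makes the single-valuedness check slightly cleaner.
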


\begin{proof}
Suppose $F$, $G$, $\tau$, $p$, $\alpha$, $\beta$
are as in the hypothesis of the theorem, and let $\langle F(\alpha ) , \varphi \rangle$ and $\langle G(\beta ) , \psi \rangle$
be the associated algebraic extensions.
In order to define a map $\theta: F(\alpha ) \to G( \beta )$ extending $\tau$, we need to characterize a typical element of
$F(\alpha )$.  Recall that $F(\alpha )$ is isomorphic to a $\Sigma^0_1$-$F$-subfield of $\clo F$ containing $\alpha$, so
let $\alpha_e \in F(\alpha )$ be the pre-image of $\alpha$ under this isomorphism.  Define $\beta_e \in G(\beta )$ similarly.
Then for every element $\gamma \in F(\alpha )$ we can uniformly find polynomials $q(x)$ and $r(x)$ in $F[x]$ such
that $\gamma = \frac {\varphi(q) (\alpha_e )}{\varphi (r)(\alpha _ e )}$.  For any such $\gamma$, define
$\theta (\gamma ) = \frac {\psi(\tau(q))(\beta_e ) }{\psi (\tau(r))(\beta _ e)}$.  Using the fact that $p(x)$ is
irreducible over $F$, one can prove that if $\varphi (r ) ( \alpha_e ) \neq 0$, then $\psi (\tau (r)) ( \beta _e ) \neq 0$.
Thus $\theta (x)$ is well-defined.  The subset of $F(\alpha ) \times G(\beta)$ defining $\theta$ exists by
$\Delta^0_1$ comprehension.  Verification of the remaining properties of $\theta$ can be
proved without further uses of comprehension or induction.  In particular, the proof that $\theta$ is single-valued relies on
the fact that $p(x)$ is irreducible over $F$.
The proofs that $\theta$ preserves operations and is onto $G(\beta )$
rely on the fact that $\tau$ is an isomorphism of $F$ onto $G$.
Given that $\theta$ is single-valued and that the isomorphisms
map multiplicative identities to multiplicative identities, one can prove that $\theta$ extends $\tau$.
\end{proof}

Ordinarily, one can iterate Hungerford's theorem to create automorphisms of algebraic closures.  Proving the existence of such
extensions inherently demands greater logical strength than Hungerford's theorem alone, as shown by the following result.  Other
results related to iteration of Hungerford's theorem appear as Theorems \ref{tower1} and \ref{tower2} in section \S\ref{sectioniso}.

\begin{thm}\label{diamond}
$ ( \rca )$
The following are equivalent:
\begin{enumerate}
\item  $\wkl$.
\item  \label{diamond2} Let $F$ be a field with algebraic extensions $K$ and $ K^\prime$.
If $\varphi$ is an isomorphism witnessing
$K \cong_F K^\prime$, then $\varphi$ extends to an isomorphism witnessing $\clo K \cong_F \clo {K^\prime}$.
In the case when $\clo K = \clo {K^\prime}$, $\varphi$ extends to an $F$-automorphism of $\clo K$.
\item \label{diamond3} Let $F$ be a field with an algebraic closure $\clo F$.  If $\alpha \in \clo F$ and
$\varphi : F(\alpha ) \to F(\alpha)$ is an $F$-automorphism of $F(\alpha )$,
then $\varphi$ extends to an $F$-automorphism of $\clo F$.
\end{enumerate}
Furthermore, if $K$ is a subset of $\clo K$ fixed by its embedding, then (2) is provable in $\rca$.  Similarly, if $F(\alpha )$ is a subset of $\clo F$ fixed by its embedding,
then (3) is provable in $\rca$.
\end{thm}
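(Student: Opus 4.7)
The proof proceeds as a cycle $(1) \Rightarrow (2) \Rightarrow (3) \Rightarrow (1)$, with the furthermore clauses handled separately. For $(1) \Rightarrow (2)$, working in $\wkl$, enumerate $\clo K$ as $\seq{a}{n}$ and $\clo{K'}$ as $\seq{c}{n}$, and build a tree of finite partial $F$-isomorphisms that map $\{a_0,\dots,a_n\}$ into $\clo{K'}$ while also covering $\{c_0,\dots,c_n\}$ in the range. Compatibility at each node is decidable from finite field-theoretic data, so the tree is $\Delta^0_1$; iterated applications of Hungerford's Theorem \ref{hungerford18}, in both directions for back-and-forth, guarantee nodes at every level. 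Each added element has only finitely many candidate images or preimages, bounded by the degree of the relevant minimal polynomial, so the tree has computably bounded branching and can be recoded as a subtree of $2^{<\Nb}$. A $\wkl$ path assembles into an $F$-isomorphism $\theta:\clo K \to \clo{K'}$ extending $\varphi$; when $\clo K = \clo{K'}$, the same $\theta$ is an $F$-automorphism. The implication $(2) \Rightarrow (3)$ is immediate: specialize (2) to $K = K' = F(\alpha)$, and note that $\clo F$ is an algebraic closure of both, so the $\clo K = \clo{K'}$ clause of (2) supplies the required $F$-automorphism of $\clo F$ extending $\varphi$.

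The direction $(3) \Rightarrow (1)$ is the main obstacle. Given an infinite subtree $T \subseteq 2^{<\Nb}$ with no path, I plan to construct a field $F$, an algebraic closure $\clo F$, an element $\alpha \in \clo F$, and an $F$-automorphism $\varphi$ of $F(\alpha)$ so that no $F$-automorphism of $\clo F$ extends $\varphi$, contradicting (3). Using distinct primes $\{p_\sigma : \sigma \in 2^{<\Nb}\}$ and their square roots in $\clo F$, I generate $F$ over $\rat$ by adjoining square roots of carefully chosen products encoding the structure of $T$. By the Roth-type independence of Lemmas \ref{roththm} and \ref{rothcoro}, an $F$-automorphism of $\clo F$ is determined by a sign assignment $\{\sqrt{p_\sigma}\} \to \{\pm 1\}$ subject to combinatorial constraints whose global solvability will be made equivalent to $T$ having an infinite path. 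With $\alpha$ chosen so that $F(\alpha)$ carries an $F$-automorphism unconditionally, an extension to $\clo F$ produces the forbidden path. All fields are produced in $\rca$ via Lemma \ref{sigma}. The key technical work is engineering the sign-compatibility architecture so that it precisely codes the path-existence problem for $T$.

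For the furthermore clauses, when $K \subseteq \clo K$ via the identity embedding, membership in $K$ is a $\Delta^0_1$ predicate on $\clo K$. I then enumerate $\clo K \setminus K$ as $\seq{a}{n}$ and inductively define $\theta_0 = \varphi$ and $\theta_{n+1}$ by extending $\theta_n$ via Hungerford's theorem, sending $a_n$ to the least root in $\clo{K'}$ of the image of its minimal polynomial over the currently defined domain; the union of the $\theta_n$ is the required $\theta$. This stage-by-stage construction is $\Sigma^0_1$-recursive and thus available in $\rca$ without any appeal to $\wkl$. The analogous argument handles (3) when $F(\alpha) \subseteq \clo F$.
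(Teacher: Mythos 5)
Your $(1)\Rightarrow(2)$ and $(2)\Rightarrow(3)$ are fine in outline (the paper instead gets $(1)\Rightarrow(2)$ immediately from the $\wkl$-provable uniqueness of algebraic closures, FSS Theorem 3.3, rather than re-running the tree argument, but inlining that argument is legitimate), and your furthermore-clause construction is essentially the paper's, though you should say why the minimal polynomial of $a_n$ over the current finitely generated domain is computable in $\rca$: this is exactly where the hypothesis that $K$ is a \emph{set} inside $\clo K$ is used, via Lemmas 2.7 and 2.8 of Friedman--Simpson--Smith (in general, irreducibility over $K$ is not decidable, which is the whole point of the theorem).

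The genuine gap is the reversal $(3)\Rightarrow(1)$, which is the heart of the theorem and which you only sketch as a plan: ``engineering the sign-compatibility architecture so that it precisely codes the path-existence problem for $T$'' is precisely the proof that is missing, and as stated it is unlikely to work. An $F$-automorphism of $\clo\rat$ acting on the square roots $\sqrt{p_\sigma}$ is constrained only by parity conditions (one $\mathbb{F}_2$-linear equation for each product of primes whose square root lies in $F$), so the class of admissible sign assignments is an affine ($\mathbb{F}_2$-coset) $\Pi^0_1$ class; the path set of an arbitrary binary tree has no such structure, so you cannot ``precisely'' code path existence for an arbitrary $T$ this way, and you also never exhibit the $\alpha$ and the unconditional nontrivial $F$-automorphism of $F(\alpha)$ (which itself needs a Roth-type independence argument). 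What these sign-flip constructions naturally deliver is $\Sigma^0_1$ separation, which is equivalent to $\wkl$ over $\rca$, and that is the paper's route: given injections $f,g$ with disjoint nonzero ranges, set $F=\rat(\sqrt{p_{f(i)}},\sqrt{2p_{g(i)}}\mid i\in\nat)$ via Lemma \ref{sigma}, use Lemma \ref{rothcoro} to see $\sqrt2\notin F$ while $\sqrt2\in\clo F=\clo\rat$, let $\varphi(a+b\sqrt2)=a-b\sqrt2$ on $F(\sqrt2)$, extend by (3) to $\clo\varphi$, and check that $S=\{i\mid\clo\varphi(\sqrt{p_i})=\sqrt{p_i}\}$ separates the ranges: $f(i)\in S$ because $\sqrt{p_{f(i)}}\in F$, while $\clo\varphi(\sqrt{2p_{g(i)}})=\sqrt{2p_{g(i)}}$ together with $\clo\varphi(\sqrt2)=-\sqrt2$ forces $\clo\varphi(\sqrt{p_{g(i)}})=-\sqrt{p_{g(i)}}$, so $g(i)\notin S$. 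Either adopt this separation argument or supply the full tree-coding construction you allude to; without one of these, the equivalence with $\wkl$ is not established.
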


\begin{proof}
We will work in $\rca$ throughout.  To prove that (1) implies (2), assume $\wkl$ and let
$F$, $K$, $K^\prime$, and $\varphi$ be as in the hypothesis of (2).  Let $\langle \clo K , \tau \rangle$
and $\langle \clo {K^\prime} , \tau^\prime \rangle$ be algebraic closures of $K$ and $K^\prime$.
Then $\langle \clo {K^\prime }, \tau^\prime \circ \varphi \rangle$ is an algebraic closure of $K$.
By Theorem 3.3 of Friedman, Simpson, and Smith \cite{fss}, $\wkl$ implies the uniqueness
of algebraic closures.  (This theorem also appears as Lemma IV.5.1 in Simpson \cite{simpson}
in a formulation that serves our purposes particularly well.)  Thus there is an isomorphism
$\psi : \clo K \to \clo{K ^\prime}$ such that for all $x \in K$, $\psi (\tau (x)) = \tau^\prime ( \varphi (x))$.  By
Definition \ref{defnext}, $\psi$ extends $\varphi$.  Since $\varphi$ fixes $F$, so does $\psi$.  Thus $\psi$
witnesses $\clo K \cong_F \clo{K^\prime}$.

Since (\ref{diamond3}) is a restriction of (\ref{diamond2}), we can complete the proof of the theorem by showing that (\ref{diamond3}) implies
$\wkl$.  It suffices to use (\ref{diamond3}) to separate the ranges of two injections with no common values.
Let $f$ and $g$ be injections such that for all $i$ and $j$, $f(i) \neq g(j)$.  Without loss of generality,
we may assume that $0$ is not in the range of either function.  Let $p_i$ denote the
$i^{\text{th}}$ prime, where $2$ is the $0^{\text{th}}$ prime.  By Lemma \ref{sigma} the field
$F = \rat ( \sqrt{\mathstrut p_{f(i)}} , \sqrt{\mathstrut 2 p_{g(i)}} \mid i \in \nat )$ exists.
By Lemma \ref{rothcoro}, $\sqrt {\mathstrut 2} \notin F$.
On the other hand, we may chose $\clo F =\clo \rat$, so
$\sqrt{\mathstrut 2} \in \clo F$.
Define $\varphi$ on $F(\sqrt{\mathstrut 2})$
by $\varphi ( a+ b \sqrt{\mathstrut 2} ) = a-b \sqrt{\mathstrut 2} $.  Note that every value of $F(\sqrt{\mathstrut 2} )$ can
be written uniquely in the form $a+ b \sqrt{\mathstrut 2} $.  By (\ref{diamond3}), $\varphi$ can be extended to an automorphism
$\clo \varphi$ of $\clo F$ that fixes $F$.  By recursive comprehension, the set
$S = \{ i \mid \clo\varphi ( \sqrt {\mathstrut p_i} ) = \sqrt {\mathstrut p_i} \}$ exists.  For any $i$,
$\sqrt{\mathstrut p_{f(i)}} \in F$, so $f(i) \in S$.  Also, $\sqrt{\mathstrut 2 p_{g(i)}} \in F$, so
$\clo \varphi ( \sqrt{\mathstrut 2 p_{g(i)}}) = \sqrt{\mathstrut 2p_{g(i)}} = \sqrt{\mathstrut 2}\sqrt{ \mathstrut p_{g(i)}}$.
Since $\clo\varphi$ is a homomorphism,
$\clo\varphi ( \sqrt{\mathstrut 2 p_{g(i)}}) = \clo\varphi ( \sqrt{\mathstrut 2})\clo\varphi (\sqrt{ \mathstrut p_{g(i)}})=-\sqrt{\mathstrut 2}\clo\varphi (\sqrt{ \mathstrut p_{g(i)}})$.
Thus $\clo\varphi ( \sqrt {\mathstrut p_{g(i)}})= - \sqrt{\mathstrut p_{g(i)}}$, so $g(i) \notin S$.  Thus $S$ is the desired
separating set.  This completes the proof of the equivalence results.

To prove the final two sentences of the theorem, consider item (2) and suppose $K$ is a subset of $\clo K$.
By Lemma 2.7 and Lemma 2.8
of Friedman, Simpson, and Smith \cite{fss}, given any finite extension of $K$, we can uniformly find all the irreducible
polynomials of the extension.  In particular, we can locate the first such polynomial in some enumeration
of all the polynomials in $\clo K [x]$.  Let $\seq {p}{i}$ and $\langle {{\langle r_{ij} \rangle _{j \le j_i}}} \rangle_{i \in \nat}$ be sequences such that
for each $i$, $p_i$ is the first irreducible polynomial of $K ( r_{tj} \mid t<i \land j \le j_t )[x]$, and $\langle r_{ij} \rangle_{j \le j_i }$
are the roots of $p_i$ in $\clo K$.  Let $\langle r_{0j}^\prime \rangle_{j\le j_0}$ be the roots of $\varphi (p_0 )$.
Any $k$ in $K(r_{0j} \mid j \le j_0 )$ can be written as $q( r_{00}, \dots , r_{0j_0} )$ for some $q \in K[x_0 , \dots , x_{j_0}]$.  Define
$\varphi^*(k) = \varphi (q)(r_{00}^\prime , \dots , r_{0j_0}^\prime )$.  In general, if $\varphi^*$ is defined on
$K (r_{tj} \mid t<i \land j \le j_t )$, let
$\langle r_{ij}^\prime \rangle_{j \le j_i }$ be the roots of $\varphi^* (p_i )$ and for
$k \in K (r_{tj} \mid t<i \land j \le j_t )(r_{ij} \mid j \le j_i )$, let
$\varphi (k) = \varphi^* (q) (r_{i0}^\prime , \dots , r_{i j_i} ^\prime )$.
Routine arguments verify that $\varphi^*$ witnesses $\clo K \cong_F \clo{K^\prime}$ and extends $\varphi$.  As noted before, item (3)
is a special case of item (2), so $\rca$ also suffices to prove (3) when $F(\alpha ) \subset \clo F$.
\end{proof}

In section 5 of their paper \cite{mn}, Metakides and Nerode
construct a computably presented field $F$ in an extension $K$ such that the only computable $F$-automorphism
of $K$ is the identity.  Their proof gradually constructs $F$ while diagonalizing to avoid computable nontrivial automorphisms.  The reversal
of the following theorem may be viewed as the construction of a computably presented field such that every nontrivial $F$-automorphism
of $\clo F$ encodes a separating set for computably inseparable computably enumerable sets.

\begin{thm}\label{mn}
$ ( \rca )$
The following are equivalent:
\begin{enumerate}
\item  $\wkl$.
\item  \label{mn2} Let $K$ be a proper algebraic extension of $F$ and let $\clo K$ be
an algebraic closure of $K$.  Then
there are at least two  $F$-embeddings of $K$ into $\clo K$.
\item  \label{mn3p}  Let $\langle K , \psi \rangle$ be an algebraic extension of $F$. Suppose that
every irreducible polynomial over $F$ that has a root in $K$ splits into linear factors in $K$.
(This is called $\nor{1}$ in Definition \ref{defnnormal}.)   If $\alpha \in K$  and $\alpha$ is not in the range of $\psi$, then there
is an $F$-automorphism $\varphi$ of $K$ such that $\varphi(\alpha) \neq \alpha$.
\item  \label{mn3} If $F$ is not algebraically closed, then there is an $F$-automorphism
of $\clo F$ that is not the identity.
\end{enumerate}
\end{thm}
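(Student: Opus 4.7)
The plan is to prove the equivalence via the cycle $(1) \Rightarrow (3) \Rightarrow (2) \Rightarrow (4) \Rightarrow (1)$, with $(4) \Rightarrow (1)$ being the main reversal.

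For $(1) \Rightarrow (3)$, assume $\wkl$ and let $\langle K, \psi\rangle$ be a $\nor{1}$ extension of $F$ with $\alpha \in K\setminus\ran(\psi)$. In characteristic $0$ the minimal polynomial $p$ of $\alpha$ over $F$ is separable of degree at least $2$, and $\nor{1}$ forces it to split in $K$, so there is a second root $\beta\neq\alpha$ in $K$. Hungerford's Theorem~\ref{hungerford18} produces an $F$-isomorphism $\tau\colon F(\alpha)\to F(\beta)$ with $\tau(\alpha)=\beta$, and Theorem~\ref{diamond} extends $\tau$ to an $F$-automorphism $\sigma$ of an algebraic closure $\clo K$ of $K$. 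By $\nor{1}$, each $a\in K$ lies in the finite set of roots of its minimal polynomial, all of which sit in $K$; $\sigma$ permutes this finite set, so $\sigma(K) = K$. Hence $\sigma|_K$ is the desired $F$-automorphism of $K$ with $\sigma(\alpha)=\beta$.

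For $(3) \Rightarrow (2)$, given a proper algebraic extension $K$ of $F$ with algebraic closure $\clo K$, form the normal closure $K^*$ of $K$ inside $\clo K$, i.e.\ the subfield generated by $K$ together with all $F$-conjugates in $\clo K$ of elements of $K$. This is a $\Sigma^0_1$-$F$-subfield of $\clo K$ and, by Lemma~\ref{sigma}, is realized as an algebraic extension of $F$ that is $\nor{1}$ by construction. Pick $\alpha\in K\setminus F$; applying (3) to $K^*$ yields an $F$-automorphism $\varphi$ of $K^*$ moving $\alpha$, and $\varphi|_K$ is an $F$-embedding of $K$ into $\clo K$ distinct from the inclusion. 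The implications $(2)\Rightarrow(4)$ and $(3)\Rightarrow(4)$ are immediate: take $K = \clo F$, a proper algebraic extension of any non-algebraically-closed $F$ and trivially $\nor{1}$.

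The main obstacle is $(4)\Rightarrow(1)$, which I approach by adapting the reversal in Theorem~\ref{diamond}. Given injections $f, g$ with disjoint ranges, set $F = \rat(\sqrt{p_{f(i)}}, \sqrt{2p_{g(i)}} : i\in\nat)$, which exists by Lemma~\ref{sigma}; Lemma~\ref{rothcoro} gives $\sqrt 2\notin F$, so $F$ is not algebraically closed. By (4), $\clo F$ admits a non-identity $F$-automorphism $\clo\varphi$, and the set $S = \{i : \clo\varphi(\sqrt{p_i})=\sqrt{p_i}\}$ is a $\Delta^0_1$ candidate separating set: automatically $\ran(f)\subseteq S$, and $S\cap\ran(g) = \emptyset$ precisely when $\clo\varphi(\sqrt 2) = -\sqrt 2$. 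The key difficulty is that (4) provides only \emph{some} non-identity automorphism, which a priori could fix $\sqrt 2$ while moving only some $\sqrt{p_k}$ with $k\notin\ran(f)\cup\ran(g)$, or acting nontrivially only on non-Kummer algebraic elements. The plan to circumvent this is to enrich $F$ by adjoining further algebraic generators---in a $\Sigma^0_1$ definable way---so that the Kummer part of $\Aut(\clo F/F)$ reduces to $\mathbb{Z}/2$, generated by the involution $\sqrt 2\mapsto -\sqrt 2$; any non-identity $F$-automorphism will then act correctly on $\sqrt 2$, and the separation argument concludes as in Theorem~\ref{diamond}. Realizing this enrichment within $\rca$ (given that the relevant set of indices outside $\ran(f)\cup\ran(g)$ is only $\Pi^0_1$) is the core technical hurdle.
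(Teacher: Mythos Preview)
Your forward implications $(1)\Rightarrow(3)\Rightarrow(2)\Rightarrow(4)$ are fine and roughly parallel the paper's cycle $(1)\Rightarrow(2)\Rightarrow(3)\Rightarrow(4)$; the normal-closure maneuver in $(3)\Rightarrow(2)$ is a legitimate alternative to the paper's direct $(2)\Rightarrow(3)$.

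The genuine gap is in $(4)\Rightarrow(1)$. You have correctly diagnosed the obstacle: the field $F=\rat(\sqrt{p_{f(i)}},\sqrt{2p_{g(i)}}\mid i\in\nat)$ from Theorem~\ref{diamond} does not work here, because $(4)$ hands you an arbitrary nontrivial $F$-automorphism of $\clo\rat$, and nothing forces it to move $\sqrt 2$. But your proposed fix---``enrich $F$ so that the Kummer part of $\Aut(\clo F/F)$ reduces to $\mathbb Z/2$''---is not a plan, it is a restatement of the goal. Making $\Aut(\clo\rat/F)$ literally of order $2$ would force $F$ to be real closed, which already contains every $\sqrt{p_i}$ and destroys the separation mechanism; restricting attention to a ``Kummer part'' does not help, since the unknown automorphism may act trivially on all square roots while moving, say, a cube root or a root of unity. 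You have not indicated any $\Sigma^0_1$ enrichment that simultaneously keeps $\sqrt 2\notin F$, keeps the $\sqrt{p_{f(i)}}$/$\sqrt{2p_{g(i)}}$ structure intact, and forces \emph{every} nontrivial $F$-automorphism of $\clo\rat$ to move $\sqrt 2$.

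The paper's solution abandons the idea of funneling everything through a single element like $\sqrt 2$. Instead it builds, in $\rca$, an explicit $\rat$-basis $\langle v_i\rangle$ of $\clo\rat$ as a tower of primitive elements, arranged via a pairing $i=(j,n)$ so that for each $i$ and each $n>0$ the product $v_{(i,n)}\cdot v_i$ is again a basis element. One then sets $F=\rat(v_{(i,f(j))},\, v_{(i,g(j))}\cdot v_i\mid i,j\in\nat)$. Because the $v_i$ generate $\clo\rat$, any nontrivial $F$-automorphism must move \emph{some} $v_{i_0}$; for that particular $i_0$ the set $\{k\mid\varphi(v_{(i_0,k)})=v_{(i_0,k)}\}$ separates $\ran f$ from $\ran g$. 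The point is not to control which element gets moved, but to arrange that whichever one does, a separating set can be read off. Proving $\sqrt 2\notin F$ in this setting requires a linear-independence argument over the constructed basis rather than a direct appeal to Lemma~\ref{rothcoro}.
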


\begin{proof}
To see that (1) implies (\ref{mn2}), assume $\wkl$ and let $\langle K , \psi \rangle$
be an algebraic extension of $F$ and let $\langle \clo K , \tau \rangle$ be an algebraic closure of $K$.
Let
$\alpha$ be an element of $K$ that is not in the range of $\psi$.
By the separability of $F$, the minimal polynomial of $\alpha$ in $F[x]$ has a root $\beta\in \clo K$ such that $\tau(\alpha) \neq \beta$.
By Theorem \ref{hungerford18}, there is an isomorphism $\varphi$ of $F(\alpha )$ onto
$F(\beta )$.  Using $\wkl$, we can apply item (\ref{diamond2}) of Theorem \ref{diamond} and extend $\varphi$
to an $F$-automorphism of $\clo K$.  Restricting this extended map to $K$ yields an $F$-embedding of $K$ into
$\clo K$ which is distinct from $\tau$.

Since $F$-embeddings must map any roots of a polynomial over $F$ to roots of the same polynomial,
adding the splitting hypothesis to (\ref{mn3p}) insures that the $F$-embedding of (\ref{mn2}) is also an automorphism on
$K$.  Thus (\ref{mn2}) implies (\ref{mn3p}).  Since $\clo F$ satisfies the splitting hypothesis of (\ref{mn3p}) and the
automorphism of  (\ref{mn3p}) is not the identity, (\ref{mn3p}) implies
(\ref{mn3}).  It remains only to show that (\ref{mn3}) implies $\wkl$.

As in the proof of the reversal of Theorem \ref{diamond}, it suffices to use (\ref{mn3}) to separate the ranges of
injections $f$ and $g$ satisfying $0 \neq f(s) \neq g(t)\neq 0$ for all $s$ and $t$.  As a notational convenience, we identify the ordered
pair $(i,j)$ with its integer code $(i+j)^2 + i$.  (This coding of pairs is described in Section II.2 of Simpson's book \cite{simpson}.)
Enumerate the polynomials in $\Qb [x]$, with $x^2 - 2$ occurring first in the ordering.
Because we will be working with finite extensions of $\Qb$, Lemma 2.8 of
Friedman, Simpson, and Smith \cite{fss} shows that $\rca$ suffices to determine which polynomials are
irreducible over any of these extensions.  Their Lemma 2.6 \cite{fss} proves the existence of primitive elements in $\rca$.
Define sequences
$\seq{v}{i}$ of algebraic numbers and $\seq{d}{i}$ of degrees of polynomials as follows.
If $i = (j,0)$ for some $j$, let $g(x)$ be the next irreducible polynomial which does not split into linear factors
over $\Qb (v_k \mid k < i )$.  Let $G$ be the splitting field of $g(x)$ over $\Qb ( v_k \mid k<i )$.  Let $v_i$ be a primitive
element for $G$ over $\Qb (v_k \mid k<i )$, and let $d_i$ be the degree of $v_i$ over
$\Qb (v_k \mid k < i )$.  Since $x^2-2$ is the first polynomial and $(0,0) = 0$, $v_0 = \sqrt{2}$
(or some other primitive element for $\Qb (\sqrt {2} )$) and $d_i = 2$.  If $i = (j,n)$ and $n>0$, let
$d_j$ be the degree of $v_j$ over $\Qb$ and let $p$ be the first prime such that $x^{d_j} - p$ is irreducible over
$\Qb ( v_k \mid k < i )$.  Let $v_i = \frac{p^{1/{d_j}}}{v_j}$ and let $d_i$ be the degree of $v_i$ over $\Qb (v_k \mid k <i )$.
Note that the degree of $v_j v_i$ over $\Qb (v_k \mid k<i )$ is $d_j$ and $d_j \le d_i$.
By Lemma 2.6 and Lemma 2.8 of
Friedman, Simpson, and Smith \cite{fss}, the
sequences $\seq {v}{i}$ and $\seq{d}{i}$ can be constructed in $\rca$.
By our construction, for each $i$ the set of products
$\{ \prod_{j<i} v_j^{e_j} \mid \forall j ( 0 \le e_j < d_j )\}$ is a vector space basis
for $\Qb (v_k \mid k<i )$ over $\Qb$.  Also, $\{ 1 , v_i , \dots , v_i ^{d_i - 1} \}$ is a basis
for $\Qb (v_k \mid k \le i )$ over $\Qb ( v_k \mid k<i )$.  These claims can be proved in $\rca$
by imitating the proof of Proposition 1.2 in Lang \cite{lang}.

In order to apply (\ref{mn3}), use 
Lemma \ref{sigma} and let
$F= \Qb ( v_{(i,f(j))} , v_{(i,g(j))} \cdot v_i \mid i,j \in \Nb )$ and $\clo F = \clo \rat$.
Assume for a moment that $\bQb$ is a nontrivial extension; details are given below.
Applying (\ref{mn3}), there is a nontrivial $F$-automorphism $\varphi$ of $\bQb$.
If $\varphi$ fixed every $v_i$, then $\varphi$ would be the identity on $\bQb$, so we can
fix some $i$ such that $\varphi(v_i ) \neq v_i $.  Since $\varphi$ fixes $F$, for every $j \in \Nb$,
$\varphi ( v_{(i,f(j))})= v_{(i,f(j))}$, and
$\varphi (v_{(i,g(j))} \cdot v_i ) = v_{(i,g(j))} \cdot v_i $.
Since $\varphi(v_{(i,g(j))} ) = v_{(i,g(j))}$ implies $\varphi(v_i ) = v_i$, we must
have $\varphi (v_{(i,g(j))}) \neq v_{(i,g(j))}$.  By $\Delta^0_1$ comprehension,
the separating set $\{k \mid \varphi (v_{(i,k)})= v_k \}$ exists.

To complete the proof of the reversal and the proof of the theorem, it remains only to show
that the field $F$ defined above is a proper subfield of $\bQb$.  Suppose by way of contradiction
that $\sqrt {2} \in F$.  Since $F$ is generated by elements of the bases we constructed, we may write
$\sqrt{2}$ as a linear combination of products of generators of $F$.  We will use $j_0$ and $j_1$ to denote
components of the pair encoded by $j$, so $j = (j_0 , j_1 )$.  Let
\[
\sqrt{2} = \sum_{i \in I } q_i \prod _{j \in J_i } ( v_{j_0} v_{(j_0 , g(j_1)) })^{e_j} \prod _{k \in K_i } v_{(k_0 , f(k_1)) }^{e_k}
\]
where $I$, $J_i$, and $K_i$ denote finite sets of integers, $0 < e_j < d_{j_0}$, and $0 < e_k < d_{(k_0 , f(k_1 ) ) }$.
For a sufficiently large value of $i$, all the products on the right are elements of the basis
$B_i = \{ \prod_{j<i} v_j^{e_j} \mid \forall j ( 0\le e_j < d_j )\}$ for $\Qb (v_k \mid k<i )$ over $\Qb$, as is $v_0 =\sqrt{2}$.
By linear independence of $B_i$, there must be some $i_0$ and some $q \in \Qb$ such that:
\[
\sqrt{2} = q \prod _{j \in J_{i_0} } ( v_{j_0} v_{(j_0 , g(j_1)) })^{e_j} \prod _{k \in K_{i_0}} v_{(k_0 , f(k_1)) }^{e_k} 
\]
Let $s$ be the largest subscript appearing on a $v$ in this product.  Since $g$ is nonzero, $g(j_1)>0$, so by
the definition of the pairing function we
have $j_0 < (j_0 , g(j_1))$.  Thus $s$ is of the form $(j_0 , g(j_1 ))$ or $(k_0 , f(k_1 ) )$.
Since the ranges of $f$ and $g$ are disjoint, only one of these may hold.  Thus for
some $0 < e < d_s$, $v_s^e \in \Qb (v_i \mid i<s )$, contradicting our construction of $F$.
This shows that $\sqrt{2} \notin F$ and completes the proof.
\end{proof}

As noted before the presentation of the preceding theorem, it has an immediate corollary in computable field theory.

\begin{cor}
Given any pair of disjoint computably enumerable sets,
there is a computable field $F$ that is not algebraically closed and has
a computable algebraic closure $\clo F$
such that any nontrivial $F$-automorphism of $\clo F$ computes a separating
set for the computably enumerable sets.  In particular, if the computably enumerable sets are computably
inseparable, then any nontrivial
$F$-automorphism is noncomputable.  Additionally,
every computable field $F$ that is not algebraically closed has a computable
algebraic closure $\clo F$, and any such closure has a nontrivial $F$-automorphism $\varphi$
such that $\varphi ^\prime \le_T {\bf 0}^\prime$.
\end{cor}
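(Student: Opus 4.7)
I would obtain the first two claims by running the reversal in Theorem~\ref{mn} as an effective construction, and obtain the third by combining the forward implication $\wkl \Rightarrow (\ref{mn3})$ of Theorem~\ref{mn} with the Low Basis Theorem of Jockusch and Soare.

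For the first claim, given disjoint c.e.\ sets $A,B$, replace them by computable injections $f,g$ whose ranges are disjoint and avoid $0$; finite cases are handled ad hoc. Execute the construction from the proof of $(\ref{mn3}) \Rightarrow \wkl$ verbatim: the sequences $\seq{v}{i}$ and $\seq{d}{i}$ are built by primitive recursion using the effective primitive-element and irreducibility-testing routines in Lemmas 2.6 and 2.8 of Friedman--Simpson--Smith, and $\clo{\rat}$ is computable by their Theorem 2.5. This yields a computable field $F = \rat(v_{(i,f(j))}, v_{(i,g(j))} v_i \mid i,j \in \Nb)$ together with a computable algebraic closure $\clo F := \clo{\rat}$. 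Any nontrivial $F$-automorphism $\varphi$ of $\clo F$ then computes $\{k \mid \varphi(v_{(i,k)}) = v_k\}$ by $\Delta^0_1$ comprehension relative to $\varphi$, and the argument in Theorem~\ref{mn} shows this set separates $\ran(f)$ from $\ran(g)$. The second claim is immediate: if $A,B$ are computably inseparable, no such $\varphi$ is computable.

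For the third claim, let $F$ be any computable field that is not algebraically closed. Theorem 2.5 of Friedman--Simpson--Smith produces a computable algebraic closure $\clo F$. Because $F$ is not algebraically closed, effectively locate an irreducible $p(x) \in F[x]$ of degree at least two together with two distinct roots $\alpha \ne \beta \in \clo F$, so that Theorem~\ref{hungerford18} yields a computable $F$-isomorphism $\sigma : F(\alpha) \to F(\beta)$. The proof of Theorem~\ref{diamond}(\ref{diamond2}) extends $\sigma$ to an $F$-automorphism of $\clo F$ via the uniqueness of algebraic closures (Lemma IV.5.1 of Simpson), whose standard proof builds a computable infinite binary tree whose paths code coherent extensions of $\sigma$ to all of $\clo F$. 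The Low Basis Theorem then supplies a path $P$ with $P^\prime \le_T \mathbf{0}^\prime$, and the $F$-automorphism $\varphi$ coded by $P$ inherits $\varphi^\prime \le_T \mathbf{0}^\prime$.

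The main obstacle is verifying that the binary tree used in the third claim is genuinely computable, so that the Low Basis Theorem applies without relativization. This amounts to inspecting Simpson~IV.5.1 and checking that, starting from the computable data $F,\sigma,\clo F$, the nodes at each level enumerate finite coherent partial extensions of $\sigma$ whose compatibility is decided by the irreducibility routine of Lemma 2.8 of FSS. Uniformity in the first claim, in particular that the construction of $\seq{v}{i}$ makes only computable choices from $f,g$, is likewise delivered by Lemmas 2.6 and 2.8 of FSS, with no additional computability input required.
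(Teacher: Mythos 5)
Your plan coincides with the paper's. For the first two claims the paper does exactly what you propose: run the reversal of Theorem~\ref{mn} with computable enumerations of the two c.e.\ sets in the roles of $f$ and $g$, so that the fixed-field set computed from a nontrivial $F$-automorphism of $\clo F=\bQb$ is the separating set. For the lowness claim the paper's primary argument instead invokes the $\omega$-model of $\wkl$ consisting only of low sets (Theorem VIII.2.17 of Simpson), which contains $F$ and $\clo F$ and hence, by Theorem~\ref{mn}, a nontrivial $F$-automorphism, automatically low; but the paper explicitly offers, as an alternative, precisely your route: apply the Jockusch--Soare low basis theorem to a computably bounded computable tree of partial extensions built as in the proof of Theorem~\ref{diamond}. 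So the decomposition is the same; the model-theoretic route just spares one the verification that the tree is computable.

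That verification is where your justification has a real flaw. For an arbitrary computable field $F$ that is not algebraically closed you cannot ``effectively locate'' an irreducible $p(x)\in F[x]$, and you cannot decide node compatibility ``by the irreducibility routine of Lemma 2.8'' of Friedman--Simpson--Smith: that lemma provides splitting algorithms only for finitely generated extensions of $\rat$ (which is how the paper uses it inside the reversal of Theorem~\ref{mn}), whereas a general computable $F$ has no splitting algorithm and irreducibility over $F$ is only $\Pi^0_1$. Indeed, fields lacking splitting algorithms are the interesting case, since with such an algorithm the final paragraph of the proof of Theorem~\ref{diamond} already produces a computable nontrivial $F$-automorphism. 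The repair is standard and needs no irreducibility testing over $F$: since the statement is purely existential, choose $p$, distinct roots $\alpha\neq\beta$, and the isomorphism $\sigma$ of Theorem~\ref{hungerford18} nonuniformly. Then let a node of length $n$ assign images to the first $n$ elements of $\clo F$, subject to: preservation of all atomic field facts coded below $n$; agreement with $\sigma$ and with fixing the image of $F$ pointwise, imposed only on the finitely many instances visible below $n$; and the requirement that each element $z$ be sent into the finite set of roots in $\clo F$ of some polynomial over $F$ annihilating $z$, found by an unbounded search that terminates because $\clo F$ is algebraic over $F$, and whose roots can all be listed because $\clo F$ is computable and algebraically closed. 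This tree is computable and computably bounded (not literally binary, but that is all the low basis theorem needs), and its infinite paths are exactly the $F$-embeddings of $\clo F$ into itself extending $\sigma$, which are nontrivial $F$-automorphisms; a low path gives the desired $\varphi$ with $\varphi'\le_T {\bf 0}'$.
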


\begin{proof}
To prove the first part of the corollary, imitate the construction from
Theorem \ref{mn}, using computable enumerations of the disjoint
c.e.~sets as the functions with disjoint ranges.  To prove the last sentence,
note that Theorem VIII.2.17 of \cite{simpson} proves the
existence of a model of $\wkl$ consisting of only low sets.
This model contains all the computable fields, an
algebraic closure of each one, and by Theorem \ref{mn}, the
desired nontrivial automorphism.  One could avoid the discussion of models by applying
the Jockusch/Soare low basis theorem, Theorem 2.1 of \cite{js}, to a computably
bounded computable tree constructed as in the proof of Theorem \ref{diamond}.
\end{proof}

The constructions of this section can be used to find computable
binary trees whose infinite
paths can be matched in a degree preserving fashion with the $F$-automorphisms of $K$ for
appropriately chosen fields $F$ and $K$.  Since the degree of $K$ over $F$ is either finite
or countable, the number of $F$-automorphisms of $K$ is either finite or the continuum.
Many computable binary trees have countably many infinite paths.  Thus, given an arbitrary
computable binary tree, we cannot expect to be able to construct fields so that the automorphisms
match the infinite paths.  This is reminiscent of the argument for why Remmel's result on $3$-colorings of
graphs \cite{remmel} does not extend to $2$-colorings.  It would be nice to know if some analog of Remmel's
result holds in an algebraic setting.

\begin{question}
Is there a nice characterization of those computable binary trees whose infinite paths can be matched
via a degree preserving bijection to the $F$-automorphisms of $K$ for some computable extension $K$
of a computable field $F$?  How does this class of trees compare with similar classes for automorphisms
of other computable algebraic structures?
\end{question}

\section{Extensions of embeddings}\label{sectioniso}

Informally, if $J$ and $K$ are algebraic extensions of $F$,
and both $F(j) \preceq_F K$ for every $j\in J$
and $F(k) \preceq_F J$ for every $k \in K$, then
$J$ is $F$-isomorphic to $K$.  The proof that $J \cong_F K$ can
be carried out in two steps:  First prove that $J \preceq_F K$ and
$K \preceq_F J$ and second deduce the existence of the isomorphism.
This second step can be carried out in $\rca$.

\begin{thm}\label{antisym}
$(\rca )$  If $ J \preceq_F K$ and $K \preceq_F J$, then $J \cong_F K$.
\end{thm}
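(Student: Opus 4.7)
The plan is to show that the $F$-embedding $\sigma \colon J \to K$ witnessing $J \preceq_F K$ is already surjective; since it is an injective field homomorphism fixing $F$, this makes $\sigma$ itself an $F$-isomorphism. Let $\tau \colon K \to J$ be the $F$-embedding witnessing $K \preceq_F J$, and let $\varphi$, $\psi$ be the embeddings of $F$ into $J$ and $K$, respectively, so that $\sigma \circ \varphi = \psi$ and $\tau \circ \psi = \varphi$.

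To verify surjectivity, fix an arbitrary $k \in K$. Because $K$ is algebraic over $F$, there is a nonzero $p(x) \in F[x]$ of some degree $n$ with $\psi(p)(k) = 0$. By $\Delta^0_1$ comprehension form the sets of roots
\[
R_J = \{j \in J : \varphi(p)(j) = 0\}, \qquad R_K = \{k' \in K : \psi(p)(k') = 0\}.
\]
Since $\sigma$ and $\tau$ are field homomorphisms compatible with $\varphi$ and $\psi$, a direct computation gives $\sigma(\varphi(p)(j)) = \psi(p)(\sigma(j))$ and $\tau(\psi(p)(k')) = \varphi(p)(\tau(k'))$, so $\sigma$ restricts to an injection $R_J \to R_K$ and $\tau$ restricts to an injection $R_K \to R_J$. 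Because a polynomial of degree $n$ has at most $n$ roots in a field, both $R_J$ and $R_K$ contain at most $n$ elements.

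Now consider the composition $\sigma \circ \tau$ restricted to $R_K$: it is an injection from $R_K$ into itself. By the finite pigeonhole principle applied to a subset of $\nat$ of cardinality at most $n$, this restriction is a bijection on $R_K$. In particular, the given $k \in R_K$ equals $\sigma(\tau(k''))$ for some $k'' \in R_K$, so $k$ lies in the image of $\sigma$. Since $k$ was arbitrary in $K$, the map $\sigma$ is surjective, and $\sigma$ therefore witnesses $J \cong_F K$.

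The main delicate step is the pigeonhole argument, because $\rca$ cannot in general compute the exact cardinality of a $\Delta^0_1$-defined subset of $\nat$. What is available, however, is the uniform bound $n = \deg p$, and the statement that every injection from a subset of $\nat$ of cardinality at most $n$ into itself is surjective is provable in $\rca$ by $\Sigma^0_1$ induction on $n$. So no principle beyond $\rca$ is needed.
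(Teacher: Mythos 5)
Your proof is correct and follows essentially the same route as the paper's: fix $k \in K$, compare the finite root sets of an annihilating polynomial in $J$ and in $K$ under the two $F$-embeddings, and conclude surjectivity by the finite pigeonhole principle, which $\rca$ proves. The only cosmetic differences are that you compose the two injections into a self-injection of $R_K$ rather than comparing the counts $m=n$, and you use an arbitrary annihilating polynomial where the paper invokes the minimal polynomial (which is in fact unnecessary, so your variant is, if anything, slightly cleaner).
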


\begin{proof}
Suppose $\langle J, \psi \rangle$ and $\langle K , \varphi \rangle$ are algebraic extensions
of $F$, $\theta: J \to K$ embeds $J$ into $K$, and $\tau: K \to J$ embeds $K$ into $J$.
We need only show that $\theta$ is onto.  Fix $k_0 \in K$.  Let $p \in F[x]$ be
the minimal polynomial for $k_0$ over $F$ and let $k_0 , \dots , k_n$ be the roots
of $\varphi(p)$ in $K$.  Let $j_0 , \dots , j_m $ be the roots of $\psi (p)$ in $J$.
Since $\theta$ maps $j_0 , \dots , j _m$ one-to-one into $k_0 , \dots, k_n$ and
$\tau$ maps $k_0 , \dots , k_n$ one-to-one into $j_0 , \dots , j_m$, by the
finite pigeonhole principle (which is provable in $\rca$) we must have that $m=n$ and
$k_0$ is in the range of $\theta$.
\end{proof}

In light of Theorem \ref{antisym}, our next goal is to formulate existence theorems
for embeddings.
Of course, 
in any embedding $K \preceq _F J$, each element $k \in K$ must map to a root in $J$ of
its irreducible polynomial.  The next two definitions describe functions that are helpful for
bounding the search for acceptable images of roots.
Eventually, we will prove embedding existence theorems with
bounds (Theorem \ref{tower1}) and without bounds (Theorem \ref{tower2}).

\begin{defn}\label{defnrm}\label{rootmod}
$(\rca )$
Suppose $\langle K , \varphi \rangle$ is an algebraic extension of $F$.  A function $r: F[x] \to K ^{< \nat}$ is a
{\sl root modulus for $K$ over $F$} if for every $p \in F[x]$, $r(p)$ is (a code for) the finite set of all the
roots of $\varphi (p) $ in $K$.
We code finite sets as in Theorem 11.2.5 of
Simpson \cite{simpson}, so the integer code for the set is always greater than the maximum element.
Thus $r(p)$ is also an upper bound on the roots of $\varphi (p)$ in $K$.
\end{defn}

\begin{defn}\label{feb}
$(\rca )$
Suppose $\langle K , \varphi \rangle$ and $\langle J , \psi \rangle$ are algebraic extensions of $F$.  An $F$ {\sl embedding bound}
of $K$ into $J$ is a function $f : K \to J^{< \nat }$ such that for each $k\in K$, $f(k)$ contains all the roots in $J$
of the minimal polynomial of $k$ over $F$.
Equivalently, for $k \in K$
and $j\in J$,
if $\forall p \in F[x]( \psi (p)(j)=0 \to \varphi (p)(k)= 0 )$ then $j \in f(k)$.
By  our choice of coding, $f(k)$ is also an upper bound on the roots in $J$ of the minimal polynomial of $k$ over $F$.
\end{defn}

Suppose $K$ and $J$ are fields, $f$ is an $F$ embedding bound, and $p$ is the minimal polynomial of $k$ over $F$.
Under our definition,
$f(k)$ may contain a finite number of elements that are not roots of $\psi (p)$ in $J$.  Also, $f(k)$ might be empty
if $K$ is not embeddable into $J$.
The next two theorems explore relationships between root moduli and $F$ embedding bounds.
The first theorem shows that a root modulus can act as a sort of universal $F$ embedding bound.

\begin{lemma}\label{rootmodtofeb}
$(\rca)$  Suppose $J$ is an algebraic extension of $F$.  $J$ has a root modulus over $F$ if and only if for every algebraic extension $K$ of $F$, there
is an $F$ embedding bound of $K$ into $J$.  If there is an $F$ embedding bound of $\clo F$ into $J$, then $J$ has a root modulus.
\end{lemma}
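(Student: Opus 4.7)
The plan is to prove three implications: (A) $\Rightarrow$ (B), (B) $\Rightarrow$ (C), and (C) $\Rightarrow$ (A), where (A) is ``$J$ has a root modulus over $F$'', (B) is the universal embedding bound statement, and (C) is the hypothesis of the last sentence. Since (B) $\Rightarrow$ (C) is immediate by taking $K = \clo F$, the real work lies in (A) $\Rightarrow$ (B) and (C) $\Rightarrow$ (A).

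For (A) $\Rightarrow$ (B), given a root modulus $r$ for $J$ over $F$ and an arbitrary algebraic extension $\langle K, \varphi \rangle$ of $F$, I would use $\Delta^0_1$ comprehension to define $P \colon K \to F[x]$ by letting $P(k)$ be the least nonzero polynomial (in some fixed enumeration of $F[x]$) satisfying $\varphi(P(k))(k) = 0$. Such a polynomial exists for each $k$ by the algebraicity of $K$, and taking the least witness to a $\Sigma^0_1$ property makes the graph of $P$ a $\Delta^0_1$ set. Setting $f(k) := r(P(k))$ then gives an $F$ embedding bound of $K$ into $J$, since the minimal polynomial $q_k$ of $k$ over $F$ divides $P(k)$, so every root of $\psi(q_k)$ in $J$ is a root of $\psi(P(k))$ and therefore lies in $r(P(k))$.

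For (C) $\Rightarrow$ (A), suppose $f \colon \clo F \to J^{<\nat}$ is an $F$ embedding bound. Given $p \in F[x]$, I would first produce the finite set $R(p) \subseteq \clo F$ of all roots of $p$ in $\clo F$, and then set
\[
r(p) = \{\, j \in J : \psi(p)(j) = 0 \text{ and } j \in f(\alpha) \text{ for some } \alpha \in R(p) \,\}.
\]
The filtering step is $\Delta^0_1$, so $r$ exists in $\rca$. Correctness reduces to showing that every root $j \in J$ of $\psi(p)$ lies in $f(\alpha)$ for some $\alpha \in R(p)$: if $q \in F[x]$ is the minimal polynomial of $j$ over $F$, then $q \mid p$; since $\clo F$ is algebraically closed, $q$ has a root $\alpha \in \clo F$, which is automatically in $R(p)$; and $q$ is also the minimal polynomial of $\alpha$ over $F$, so $j \in f(\alpha)$ by the defining property of an embedding bound.

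The main technical obstacle is constructing $R(p)$ inside $\rca$. For this I would factor $p$ into distinct $F$-irreducible factors $q_1, \dots, q_\ell$ using Lemma 2.8 of Friedman, Simpson, and Smith \cite{fss}; since we are in characteristic $0$, each $q_i$ is separable of degree $d_i$ with exactly $d_i$ distinct roots in $\clo F$, so a bounded $\Sigma^0_1$ search through $\clo F$ suffices to collect all $\sum d_i$ roots, and their union is $R(p)$.
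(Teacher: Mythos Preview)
Your argument follows the paper's exactly: for (A)$\Rightarrow$(B) you set $f(k)=r(P(k))$ where $P(k)$ is the first annihilating polynomial of $k$, and for (C)$\Rightarrow$(A) you take $r(p)$ to be the union of the $f(\alpha)$ over roots $\alpha$ of $p$ in $\clo F$, filtered down to actual roots of $\psi(p)$ in $J$. The logical structure and the constructions are the same.

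There is one technical slip in your construction of $R(p)$. You propose to factor $p$ into $F$-irreducible factors using Lemma~2.8 of \cite{fss}, but that lemma presupposes a splitting algorithm for the base field, and an arbitrary $F$ need not have one in $\rca$. The paper sidesteps this entirely: since $\clo F$ is algebraically closed, $p$ splits into linear factors over $\clo F$, so one simply searches $\clo F$ for a root, divides it out, and repeats---a process that terminates in at most $\deg p$ steps and never consults irreducibility over $F$. With that replacement your proof is complete and coincides with the paper's; the separability remark and the factoring over $F$ are both unnecessary detours.
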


\begin{proof}
Suppose $\langle J , \psi \rangle$ is an algebraic extension of $F$.  First, let $r$ be a root modulus for $J$ and let $\langle K ,\varphi \rangle$ be an extension of $F$.
For each $k \in K$, let $p_k$ be the first polynomial in some
enumeration of $F[x]$ such that $\varphi(p_k ) (k) = 0$.  Define $f: K \to J^{< \nat }$ by $f(k) = r(p_k)$.
For $k \in K$, the minimal polynomial of $k$ over $F$ divides $p_k$, so all of its roots are in $f(k)$.  Thus $f$ is an $F$ embedding bound of $K$ into $J$.

Since $\clo F$ is an algebraic extension of $F$, the remaining implication of the second sentence follows from the third sentence.  To prove the third sentence,
suppose $f$ is an $F$ embedding bound of $\clo F$ into $J$.  Given any polynomial $p \in F[x]$, let $q_0 , \dots , q_n$
be a list of all the roots of $p(x)$ in $\clo F$, and define
$r(p) = \{ j \in f(q_0 ) \cup f(q_1) \cup \dots \cup f(q_n)\mid \varphi(p)(j)=0\}$.  $\rca$ proves that $r$ exists and is a root modulus
for $J$.
\end{proof}

General assertions of the
existence of $F$ embedding bounds and root moduli require additional set comprehension.

\begin{thm}
$( \rca )$ The following are equivalent:
\begin{enumerate}
\item   $\aca$.
\item  If $J$ is an algebraic extension of $F$, then $J$ has a root modulus.
\item  If $K$ and $J$ are algebraic extensions of $F$, then there is an $F$ embedding bound of $K$ into $J$.
\end{enumerate}
\end{thm}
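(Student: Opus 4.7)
The plan is to verify the cycle $(1) \Rightarrow (2) \Rightarrow (3) \Rightarrow (1)$, observing that the equivalence of (2) and (3) is almost free: Lemma \ref{rootmodtofeb} already gives $(2) \Rightarrow (3)$ directly, and its final sentence shows that an $F$ embedding bound of $\clo F$ into $J$ suffices to construct a root modulus for $J$, so $(3) \Rightarrow (2)$ by specializing to $K = \clo F$. The substantive content is therefore the equivalence of (1) with (2).

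For $(1) \Rightarrow (2)$, I would work in $\aca$. Given an algebraic extension $\langle J, \psi \rangle$ of $F$ and $p \in F[x]$ of degree $n$, the set $R_p = \{j \in J : \psi(p)(j) = 0\}$ is arithmetically definable and, by elementary field theory provable in $\rca$, has at most $n$ elements. Arithmetic comprehension yields $R_p$ as a set; its finiteness then allows us to locate its maximum and hence its canonical finite-set code $c_p$, and a further application of arithmetic comprehension gathers the ordered pairs $(p, c_p)$ into a total function $r : F[x] \to J^{<\nat}$, which is the required root modulus.

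For $(2) \Rightarrow (1)$, I would reverse by extracting the range of an arbitrary injection $f : \nat \to \nat$ from a suitable root modulus; this is a standard equivalent of $\aca$ over $\rca$. Let $p_n$ denote the $n$-th prime. Using Lemma \ref{sigma}, realize as an algebraic extension $\langle G, \psi \rangle$ of $\rat$ the $\Sigma^0_1$-subfield of $\clo \rat$ generated by $\{\sqrt{p_{f(i)}} : i \in \nat\}$, and apply (2) to obtain a root modulus $r$ for $G$ over $\rat$. For each $n$, inspect $r(x^2 - p_n)$. If $n = f(i)$ lies in the range of $f$, then $\sqrt{p_n}$ belongs to the subfield, its $\tau$-preimage is a root of $\psi(x^2 - p_n)$ in $G$, and $r(x^2 - p_n)$ codes a nonempty set. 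Conversely, if $n \notin \ran(f)$ but some $g \in G$ satisfies $g^2 = \psi(p_n)$, then $\tau(g)^2 = p_n$ forces $\tau(g) = \pm \sqrt{p_n}$, so $\sqrt{p_n}$ lies in the subfield; the $\Sigma^0_1$ witness for this membership exhibits $\sqrt{p_n}$ as a $\rat$-rational combination of field operations applied to finitely many generators $\sqrt{p_{f(i_1)}}, \ldots, \sqrt{p_{f(i_k)}}$, contradicting Lemma \ref{roththm}. Hence $n \in \ran(f)$ if and only if $r(x^2 - p_n)$ codes a nonempty set, and $\ran(f)$ exists by $\Delta^0_1$ comprehension.

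The main obstacle I anticipate is the finite-support step in the reversal: formally, in $\rca$, one must translate membership in the $\Sigma^0_1$-subfield (a statement about a $\Sigma^0_1$ formula $\theta$) into a finitely-supported algebraic expression in the distinguished generators, in a form that lets Lemma \ref{roththm} be applied at a specific finite stage. This is morally clear, since any $\Sigma^0_1$ witness is itself a finite object encoding finitely many generators, but the argument requires careful bookkeeping about how $\theta$ and Lemma \ref{sigma} jointly represent elements of $G$.
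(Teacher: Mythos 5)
Your proposal is correct, and its two substantive steps coincide with the paper's: for $(1)\Rightarrow(2)$ the root set of $\psi(p)$ in $J$ is uniformly arithmetically definable, and for $(2)\Rightarrow(1)$ one realizes $\rat(\sqrt{p_{f(i)}}\mid i\in\nat)$ via Lemma \ref{sigma}, applies the root modulus to the polynomials $x^2-p_n$, and reads off $\ran(f)$ by $\Delta^0_1$ comprehension, with Lemma \ref{roththm} guaranteeing that $\sqrt{p_n}$ lies in the subfield only when $n\in\ran(f)$ (the paper leaves this last appeal implicit). Where you differ is in how item (3) enters the equivalence. The paper proves $(3)\Rightarrow(1)$ directly, by a second reversal: it introduces a second field $K$ with image $\rat(\sqrt{p_i}\mid i\in\nat)$, applies (3) to get an $F$ embedding bound $f$ of $K$ into $J$, and decides $\exists t\,(g(t)=k)$ by checking whether some $a\in f(\tau^{-1}(\sqrt{p_k}))$ satisfies $\psi(a)^2=p_k$. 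You instead close the loop entirely inside $\rca$ by specializing (3) to $K=\clo F$ and invoking the last sentence of Lemma \ref{rootmodtofeb} to recover a root modulus, so only one coding construction is needed; this is legitimate since $\rca$ proves the existence of $\clo F$ and that final sentence of the lemma is proved in $\rca$. Your route is slightly more economical, while the paper's direct reversal has the side benefit of exhibiting explicitly how an embedding bound by itself computes the range of an injection, which is the content behind the remark about Miller and Shlapentokh after Theorem \ref{tower1}. Finally, the ``finite-support'' worry you flag in the reversal is not a real obstacle: the $\Sigma^0_1$-subfield is defined exactly as the collection of elements of $\clo\rat$ given by rational expressions in finitely many designated generators, so a $\Sigma^0_1$ witness for membership already supplies the finite list of generators to which Lemma \ref{roththm} is applied.
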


\begin{proof}
Working in $\rca$, we begin by proving the equivalence of (1) and (2).
To prove that (1) implies (2), suppose $\langle J, \psi \rangle$ is an algebraic extension of $F$.  Since the finite set of all roots of $\psi (p)$ in $J$ is uniformly
arithmetically definable using $p$ as a parameter, $\aca$ proves the existence of a root modulus for $J$.

To prove that (2) implies (1), let $g: \nat \to \nat$ be an injection.  $\aca$ follows from the existence of the range of $g$.
Let $F = \rat$.  Let $p_i$ denote the $i^{\text{th}}$ prime and
consider $\rat ( \sqrt{p_{g(i)}} \mid i \in \nat )$ as a $\Sigma^0_1$-subfield of some algebraic closure $\clo \rat$ of the rationals.
We can find $\langle J , \psi \rangle $, a field extension of $\rat$, such that $\rat ( \sqrt{p_{g(i)}} \mid i \in \nat )$ is an isomorphic image of $J$ in $\clo \rat$.
Apply (2) to find a root modulus for $J$.  Note that for every natural number $k$,
\[ \exists t  (g(t) = k) \leftrightarrow r(x^2 - p_k ) \neq \emptyset .\]
Since $r (x^2 - p_k )$ is a code for a finite set, $\{ k \mid r(x^2 - p_k ) \neq \emptyset \} $ exists by
$\Delta^0_1$-comprehension.  Thus $\rca$ and (2) suffice to prove the existence of the range of $g$.

Now we turn to the equivalence of (1) and (3).  Since (1) implies (2), by Lemma \ref{rootmodtofeb}, (1) also implies (3).
To prove that (3) implies (1), let $g$, $F=\rat$, and $\langle J , \psi \rangle$ be as in the preceding paragraph.
Let $\langle K , \varphi \rangle$ be a field extension of $F$ such that $\rat ( \sqrt{p_i } \mid i \in \nat )$ is an isomorphic image
of $K$ in $\clo \rat$;  let $\tau$ be that isomorphism.  Apply (3) to find $f:K \to J^{< \nat }$, an $F$ embedding bound  of $K$ into $J$.  Note that
for every natural number $k$,
\[
\exists t (g(t) = k ) \leftrightarrow \exists a (a \in f(\tau^{-1} ( \sqrt{p_k})) \land \psi (a)^2 = p_k ).
\]
Since $f(\tau^{-1} ( \sqrt{p_k}))$ is a finite set, the range of $g$ exists by $\Delta^0_1$-comprehension, completing the proof.
\end{proof}

Despite the fact that root moduli and embedding bounds are not interchangeable, they both can serve to formulate bounded
versions of an embedding theorem.

\begin{thm}\label{tower1}
$(\rca )$  The following are equivalent:
\begin{enumerate}
\item  $\wkl$.
\item  Suppose $K$ and $J$ are algebraic extensions
of $F$ and $f_K$ is an $F$ embedding bound of $K$ into $J$.
If  $F ( k) \preceq _F J$ for all $k \in K$,
then $K \preceq_F J$. 
\item  Suppose $K$ and $J$ are algebraic extensions
of $F$
and
$r_J$ is a root modulus of $J$ over $F$.
If  $F ( k) \preceq _F J$ for all $k \in K$,
then $K \preceq _F J$. 
\end{enumerate}
\end{thm}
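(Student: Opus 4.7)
The plan is to establish the cycle (1) $\Rightarrow$ (2) $\Rightarrow$ (3) $\Rightarrow$ (1) in $\rca$. The middle implication is immediate from Lemma \ref{rootmodtofeb}: a root modulus $r_J$ of $J$ over $F$ produces an $F$ embedding bound $f_K \colon K \to J^{<\nat}$ (for instance $f_K(k) = r_J(p_k)$ for the first $p_k \in F[x]$ killing $k$), and then (2) delivers $K \preceq_F J$.

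For (1) $\Rightarrow$ (2), I would work in $\wkl$ and assemble a bounded tree of compatible partial embeddings. Enumerate $K = \{k_0, k_1, \dots\}$, and using the primitive element theorem (Lemma 2.6 of \cite{fss}) together with the algorithm for minimal polynomials (Lemma 2.8 of \cite{fss}), produce in $\rca$ a sequence $\seq{\gamma}{n}$ in $K$ with $F(\gamma_n) = F(k_0, \dots, k_{n-1})$, minimal polynomials $\seq{p}{n}$ in $F[x]$, and transition polynomials $\seq{q}{n}$ in $F[x]$ with $\gamma_n = q_n(\gamma_{n+1})$. Let $T$ consist of $\sigma = \langle j_0^*, \dots, j_{n-1}^*\rangle$ such that $j_i^* \in f_K(\gamma_i)$ and $\psi(p_i)(j_i^*) = 0$ for every $i < n$, and $\psi(q_i)(j_{i+1}^*) = j_i^*$ for every $i < n-1$. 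Membership in $T$ is $\Delta^0_1$, and $\sigma(i)$ is bounded by the code of the finite set $f_K(\gamma_i)$, so $T$ is a bounded tree. The hypothesis that $F(\gamma_n) \preceq_F J$ produces an $F$-embedding $\theta_n$ whose image tuple $(\theta_n(\gamma_0), \dots, \theta_n(\gamma_{n-1}))$ lies at level $n$ of $T$, so $T$ is infinite. Bounded K\"onig's lemma, available from $\wkl$ by Lemma IV.1.4 of \cite{simpson}, furnishes an infinite path; for each $n$ this codes an $F$-embedding $F(\gamma_n) \to J$, the coherence $\theta_{n+1}|_{F(\gamma_n)} = \theta_n$ is built into the transition condition, and $\Delta^0_1$ comprehension assembles the union $K = \bigcup_n F(\gamma_n) \to J$ into the desired $F$-embedding.

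For (3) $\Rightarrow$ (1), I would derive item (\ref{diamond3}) of Theorem \ref{diamond}, which is already known to imply $\wkl$. Given $\alpha \in \clo F$ and an $F$-automorphism $\varphi$ of $F(\alpha)$, view $\clo F$ as an algebraic extension of $F(\alpha)$ twice: once via the inclusion (source) and once via $\varphi$ followed by the inclusion (target). A root modulus of the target over $F(\alpha)$ exists in $\rca$ because $\clo F$ is algebraically closed and the roots of any $F(\alpha)$-polynomial there can be located with the standard algorithms. The hypothesis $F(\alpha)(k) \preceq_{F(\alpha)} J$ holds for every $k \in \clo F$ for the same reason. Apply (3) to obtain $\theta \colon \clo F \to \clo F$ with $\theta|_{F(\alpha)} = \varphi$. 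Reversing source and target and applying (3) to the automorphism $\varphi^{-1}$ yields a companion embedding $\theta' \colon \clo F \to \clo F$ in the opposite direction, so Theorem \ref{antisym} forces $\theta$ to be surjective. The resulting bijection is the $F$-automorphism of $\clo F$ extending $\varphi$ demanded by Theorem \ref{diamond}(\ref{diamond3}).

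The chief obstacle will be the $\rca$ bookkeeping in (1) $\Rightarrow$ (2): making sure the sequences $\seq{\gamma}{n}$, $\seq{p}{n}$, and $\seq{q}{n}$ can all be produced uniformly, that $T$ really is $\Delta^0_1$ and bounded in the sense of Simpson, and that the path's data determines an embedding on the whole of $K$ via $\Delta^0_1$ comprehension. The remaining two implications reduce routinely to Lemma \ref{rootmodtofeb}, Theorem \ref{antisym}, and Theorem \ref{diamond}.
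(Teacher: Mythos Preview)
Your proposal is correct, and for (1)$\Rightarrow$(2) and (2)$\Rightarrow$(3) it matches the paper closely: the paper packages your bounded-tree argument as an application of Simpson's compactness Lemma~VIII.2.4.1 rather than building $T$ explicitly, but the content is the same.

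For (3)$\Rightarrow$(1) you take a genuinely different route. The paper simply applies (3) with $K$ and $J$ any two algebraic closures of $F$: both have root moduli (being algebraically closed), the hypothesis $F(k)\preceq_F J$ holds for every $k$ by Theorem~\ref{hungerford18}, so (3) gives embeddings both ways, Theorem~\ref{antisym} yields $\clo F_1 \cong_F \clo F_2$, and uniqueness of algebraic closures is equivalent to $\wkl$ by Theorem~3.3 of \cite{fss}. Your route instead targets item~(\ref{diamond3}) of Theorem~\ref{diamond}, viewing $\clo F$ as an $F(\alpha)$-extension in two ways (via the inclusion and via $\varphi$). This works---any $F(\alpha)$-isomorphism between those two structures automatically restricts to $\varphi$ on $F(\alpha)$, so Theorem~\ref{antisym} gives exactly the automorphism you need---but it is more elaborate than the paper's version, since you must track the twisted $F(\alpha)$-structure and apply (3) twice with that twist. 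The paper's approach avoids the twist entirely and reaches a known reversal in one step.
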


\begin{proof}
To prove that $\wkl$ implies (2), let $K$, $J$, $F$, and $f_K$ be as in (2) and suppose
$F(k) \preceq_F J$ for all $k \in K$.  Consider the formula $\theta ( \varphi , k )$ that asserts:
\begin{list}{$\bullet$}{}
\item  $\varphi$ is a subset of $K \times J$.
\item  $\varphi$ preserves field operations.
\item  $\varphi$ is one-to-one.
\item  If $k \in K$, then there is some $j \in f_K (k)$ such that $(k,j) \in \varphi$.
\end{list}
Because $f_K(k)$ is always finite, $\theta (\varphi , k )$ is a $\Pi^0_1$ formula.  For any $n$,
we can find a primitive element $k_0$ for $F( k \mid k \in K \land k < n )$.  Any $\varphi$ witnessing
$F(k_0 ) \preceq_F J$ will also witness $\exists \varphi \forall k < n ~\theta (\varphi , k )$.
By Lemma VIII.2.4.1 of Simpson \cite{simpson}, $\wkl$ proves
$\exists \varphi \forall k ~\theta (\varphi , k )$.  Any $\varphi$ satisfying this formula
$F$-embeds $K$ into $J$.

The proof that (2) implies (3) is immediate from Lemma \ref{rootmodtofeb}.
To prove that (3) implies (1),  note that given two algebraic closures of a field, $\rca$
can prove the existence of the root moduli and embeddings as in (3).  The conclusion of (3) shows
that
each algebraic closure is embeddable in the other.  By Theorem \ref{antisym}, the algebraic closures are $F$-isomorphic.
This implies $\wkl$ by Theorem 3.3 of Friedman, Simpson,
and Smith \cite{fss}.
\end{proof}

The construction used by Miller and Shlapentokh \cite{ms} to prove their Proposition 4.3 can be used
as an interesting alternative proof that (2) implies (1) in the preceding theorem.  The fields in their construction have computable embedding
bounds, but do not have computable root moduli.

In the absence of root moduli and embedding bounds,  the theorem is much stronger.

\begin{thm}\label{tower2}
$(\rca )$  The following are equivalent:
\begin{enumerate}
\item  $\aca$.
\item  Suppose $K$ and $J$ are algebraic extensions
of $F$.
If $F (k ) \preceq _F J$ for all $k \in K$ then $K \preceq _F J$.
\end{enumerate}
\end{thm}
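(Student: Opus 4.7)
The proof splits into $(1)\Rightarrow(2)$ and the reversal $(2)\Rightarrow(1)$. For $(1)\Rightarrow(2)$, I would assume $\aca$ and apply the preceding theorem to extract a root modulus $r_J$ for $J$ over $F$. Since $\aca$ implies $\wkl$, Theorem~\ref{tower1}(3) then converts the hypothesis ``$F(k)\preceq_F J$ for every $k\in K$'' into the desired embedding $K\preceq_F J$. This direction is essentially bookkeeping given the two preceding theorems.

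For $(2)\Rightarrow(1)$, I plan to work in $\rca$, fix an arbitrary injection $g:\nat\to\nat$, and derive the existence of $\mathrm{range}(g)$ as a set, which suffices for $\aca$. The plan is to use $g$ to build algebraic extensions $K$ and $J$ of $F=\rat$ with two properties: (a)~$F(k)\preceq_F J$ for every $k\in K$, so the hypothesis of~(2) is in force; (b)~every $F$-embedding $\theta:K\to J$ produced by~(2) encodes $\mathrm{range}(g)$ as a $\Delta^0_1$ set in $\theta$. Letting $p_i$ denote the $i$th prime, both $K$ and $J$ would be obtained via Lemma~\ref{sigma} from $\Sigma^0_1$-subfields of an algebraic closure of $\rat$; generators would be indexed via the pairing function so that the only consistent embeddings are those whose images reveal, for each $n$, whether $n$ lies in the range of $g$. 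Lemma~\ref{roththm} and Lemma~\ref{rothcoro} would supply the linear-independence tools needed to verify that $K$ and $J$ are genuinely distinguished by these choices, following the pattern of the reversals in Theorems~\ref{diamond} and~\ref{mn}.

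The main obstacle is item (b): naive encodings produce only a $\Sigma^0_1$ description of $\mathrm{range}(g)$ (for instance, via the range of $\theta$ restricted to selected generators), which is insufficient to produce the range as a set in $\rca$ alone and would only recover $\wkl$-strength information. My plan is to arrange the construction so that for each $n$, the question ``$n\in\mathrm{range}(g)$'' is decided by the image under $\theta$ of a single, uniformly specified element of $K$. Since $\theta$ exists as a set of ordered pairs, such pointwise evaluations are $\Delta^0_1$ in $\theta$, and then $\Delta^0_1$-comprehension produces $\mathrm{range}(g)$. Realizing this uniform-witness property likely requires an auxiliary layer of generators analogous to the pairing/doubling device used in the reversal of Theorem~\ref{mn}, whose values under any valid embedding are forced regardless of which finite piece of $K$ one is looking at.
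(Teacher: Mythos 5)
Your forward direction is fine and matches the paper: under $\aca$ the needed bound/root modulus is arithmetically definable, and Theorem~\ref{tower1} (which only needs $\wkl$) finishes. The gap is in the reversal, where you only describe a target property (each ``$n\in\ran(g)$'' decided by the image under $\theta$ of one uniformly specified element of $K$) without a construction that realizes it, and the device you point to cannot realize it. Fields built from square roots of rationals in the style of the reversals of Theorems~\ref{diamond} and~\ref{mn} are $\nor{1}$-normal extensions of the base (any quadratic with a root splits), and by Theorem~\ref{normaltower} the instances of statement~(2) for normal $K$ and $J$ are already provable from $\wkl$; so no encoding using only such fields can force $\aca$. Concretely, the obstruction is that with quadratic generators the only ``twist'' of $\sqrt{p}$ with the same minimal polynomial is $-\sqrt{p}$, and $-\sqrt{p}\in J$ iff $\sqrt{p}\in J$: you cannot simultaneously keep the hypothesis $F(k)\preceq_F J$ for all $k\in K$ (which requires $J$ to contain a root of $x^2-p$) and force every $F$-embedding to move $\sqrt{p}$ when $n\notin\ran(g)$ (nothing prevents the embedding from fixing everything, which reveals only $\Sigma^0_1$ information). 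Replacing $\sqrt{p}$ by a product like $\sqrt{2p}$, as in Theorem~\ref{mn}, changes the minimal polynomial and so destroys the hypothesis instead.

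The paper's reversal escapes this by using non-normal Kummer extensions of odd prime degree together with roots of unity: with $\la p_m\ra$ the odd primes and $\zeta_{p_n}$ a primitive $p_n$-th root of unity, it takes (images of) $F=\Qb(\{2^{1/p_n},\zeta_{p_n}\mid n\in\ran(g)\})$, $K=\Qb(\{\zeta_{p_n}\mid n\in\ran(g)\}\cup\{2^{1/p_m}\mid m\in\Nb\})$, and $J=\Qb(\{\zeta_{p_n}\mid n\in\ran(g)\}\cup\{\zeta_{p_m}2^{1/p_m}\mid m\in\Nb\})$, inside an algebraic closure of a real closure of $\Qb$ so that ``real'' is a $\Delta^0_1$ predicate. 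Here $\zeta_{p_n}2^{1/p_n}$ is a root of the same polynomial $x^{p_n}-2$, so the hypothesis $F(k)\preceq_F J$ holds for every $k\in K$ (verified in $\rca$ using bounded $\Pi^0_1$ comprehension for the finitely many indices involved in $k$), yet when $n\notin\ran(g)$ the only root of $x^{p_n}-2$ in $J$ is the non-real one, while for $n\in\ran(g)$ the generator $2^{1/p_n}$ lies in $F$ and is fixed. Thus $n\in\ran(g)$ iff the image of $\tau_K^{-1}(2^{1/p_n})$ under the embedding is real, which is $\Delta^0_1$ in the embedding, yielding $\ran(g)$ and hence $\aca$. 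You would need to supply a construction with these features (or an equivalent one); your current plan, as described, stalls exactly at the point you flag as the main obstacle.
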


\begin{proof}
To show that $\aca$ implies (2), it suffices to note that given $K$ and $J$ as in (2),  a root moduli for $K$ over $F$ is arithmetically definable.  Since $\aca$ implies $\wkl$, we may apply Theorem \ref{tower1} to find the desired isomorphism.

To prove the converse, let $g \colon \Nb \imp \Nb$ be an injection.  We prove that the range of $g$ exists.  First, extend $\Qb$ to a real closure, then extend the real closure to an algebraic closure $\bQb$.  Since the algebraic closure is a finite separable extension of the real closure, the image of the real closure exists inside the algebraic closure by Friedman, Simpson, and Smith \cite{fss} Lemma 2.6.  This allows us to distinguish the real elements of $\bQb$ from the complex elements of $\bQb$.  Fix an enumeration of $\bQb$, let $\la p_m \ra_{m \in \Nb}$ enumerate the odd primes, and for each $m > 0$, let $\zeta_m \in \bQb$ be the first enumerated primitive $m$\textsuperscript{th} root of unity.  The fields $\Qb(\{2^{1/p_n}, \zeta_{p_n} \mid \exists m (g(m) = n)\})$, $\Qb(\{\zeta_{p_n} \mid \exists m (g(m) = n)\} \cup \{2^{1/p_m} \mid m \in \Nb\})$, and $\Qb(\{\zeta_{p_n} \mid \exists m (g(m) = n)\} \cup \{\zeta_{p_m}2^{1/p_m} \mid m \in \Nb\})$ are all $\Sigma^0_1$-$\Qb$-subfields of $\bQb$.  By Lemma \ref{sigma}, let $F$, $K$, and $J$, be algebraic extensions of $\Qb$ together with embeddings $\tau_F$, $\tau_K$, and $\tau_J$ of $F$, $K$, and $J$, respectively, into $\bQb$ such that
\begin{align*}
\ran(\tau_F) &= \Qb(\{2^{1/p_n}, \zeta_{p_n} \mid \exists m (g(m) = n)\});\\
\ran(\tau_K) &= \Qb(\{\zeta_{p_n} \mid \exists m (g(m) = n)\} \cup \{2^{1/p_m} \mid m \in \Nb\});\\ 
\ran(\tau_J) &= \Qb(\{\zeta_{p_n} \mid \exists m (g(m) = n)\} \cup \{\zeta_{p_m}2^{1/p_m} \mid m \in \Nb\}).
\end{align*}

The field $\Qb(\{2^{1/p_n}, \zeta_{p_n} \mid \exists m (g(m) = n)\})$ is a subfield of both $\Qb(\{\zeta_{p_n} \mid \exists m (g(m) = n)\} \cup \{2^{1/p_m} \mid m \in \Nb\})$ and $\Qb(\{\zeta_{p_n} \mid \exists m (g(m) = n)\} \cup \{\zeta_{p_m}2^{1/p_m} \mid m \in \Nb\})$, so we define maps $\psi_K \colon F \to K$ and $\psi_J \colon F \to J$ by $\psi_K = \tau_K^{-1} \circ \tau_F$ and $\psi_J = \tau_J^{-1} \circ \tau_F$ which witness that $K$ and $J$ are both algebraic extensions of $F$.

To see that $F(k) \preceq_F J$ for all $k \in K$, fix a $k \in K$ and let $M$ be such that $\tau(k) \in \Qb(\{\zeta_{p_n} \mid \exists m (g(m) = n)\} \cup \{2^{1/p_m} \mid m < M\})$.  By  bounded $\Pi^0_1$ comprehension, let $X = \{n < M \mid \neg\exists m(g(m)=n)\}$.  Then $k \in F(\tau_K^{-1}(2^{1/p_n}) \mid n \in X)$, which embeds into $J$ by extending $\psi_J$ so that $\psi_J(2^{1/p_n}) = \zeta_{p_n}2^{1/p_n}$ for each $n \in X$.

By (2), let $\varphi$ be an $F$-embedding of $K$ into $J$.  Let $X$ be the set of numbers $n$ such that $\tau_J(\varphi(\tau^{-1}_K(2^{1/p_n}))) \in \bQb$ is real.  We show that $X$ is the range of $g$.  Suppose $n = g(m)$ for some $m$.  Then $\tau^{-1}_F(2^{1/p_n})$ exists and $\tau^{-1}_K(2^{1/p_n}) = \psi_K(\tau^{-1}_F(2^{1/p_n}))$.  Thus $\varphi(\tau^{-1}_K(2^{1/p_n})) = \varphi(\psi_K(\tau^{-1}_F(2^{1/p_n})))$, and the fact that $\varphi$ is an $F$-embedding means that $\varphi(\psi_K(\tau^{-1}_F(2^{1/p_n}))) = \psi_J(\tau^{-1}_F(2^{1/p_n})) = \tau_J^{-1}(2^{1/p_n})$.  All together, this gives $\tau_J(\varphi(\tau^{-1}_K(2^{1/p_n}))) = \tau_J(\tau_J^{-1}(2^{1/p_n})) = 2^{1/p_n}$, which is real.  On the other hand, if there is no $m$ such that $n = g(m)$, then the only root of $x^{p_n} - 2$ in $\Qb(\{\zeta_{p_n} \mid \exists m (g(m) = n)\} \cup \{\zeta_{p_m}2^{1/p_m} \mid m \in \Nb\})$ is $\zeta_{p_n}2^{1/p_n}$, and $\tau_J(\varphi(\tau^{-1}_K(2^{1/p_n})))$ must be a root of $x^{p_n}-2$.  Thus $\tau_J(\varphi(\tau^{-1}_K(2^{1/p_n}))) = \zeta_{p_n}2^{1/p_n}$, which is not real. 
\end{proof}

\section{Normal extensions and Galois extensions}\label{sectionnorm}

The field theory literature contains a variety of definitions of
normal algebraic extensions.  For example,
Lang \cite{lang} lists three versions corresponding
to $\nor1$, $\nor2$, and $\nor3$ in the following definition.  We add a fourth version to the list that makes use
of the notion of restriction presented in Definition \ref{defnext}.
While algebraists view these as equivalent definitions,
this section shows that the equivalence proofs vary in logical strength.

\begin{defn}\label{defnnormal}
$( \rca )$  Let $\langle K , \psi \rangle$ be an algebraic extension of $F$.
For $1 \le i \le 4$, we say $K$ is a $\nor{ }i${\sl -normal extension of }$F$ if
the condition $\nor{}i$ in the list below holds.
\begin{list}{}{\setlength{\labelwidth}{.2in}}
\item [$\nor{1}$:] If $p(x)\in F[x]$ is irreducible and $\psi (p)(x)$ has a root in $K$, then $\psi (p)(x)$ splits into linear factors in $K$.
\item [$\nor{2}$:] There is a sequence of polynomials over $F$ such that the
image under $\psi$ of each polynomial in the sequence splits into linear factors in $K$, and $K$ is generated by the
roots of these polynomials.  That is, $K$ is the splitting field of the images under $\psi$ of some sequence of polynomials over $F$.
\item [$\nor{3}$:]  If $\varphi : K \to \clo K$ is an $F$-embedding, then $\varphi$ is an $F$-automorphism of $K$.
\item [$\nor{4}$:]  If $\varphi : \clo K \to \clo K$ is an $F$-automorphism, then $\varphi$ restricts to an
$F$-automorphism of $K$.
\end{list}
\end{defn}

Lang \cite{lang} defines Galois extensions as normal separable extensions.  In light of the preceding
list, this yields four reasonable definitions.  Before addressing the equivalence of the various definitions,
we append the following definition from  Hungerford \cite{hungerford}.

\begin{defn}$(\rca)$\label{defngalois}
  A {\sl Galois extension} of the field $F$ is an algebraic extension $K$ of $F$ such that the only elements
  of $K$ that are fixed by all $F$-automorphisms of $K$ are the elements of $F$.  To parallel our $\nor {}i$ notation, we
  will say that Galois extensions have the property $\gal$.
\end{defn}

Usage of the terms ``normal'' and ``Galois'' is far from standardized.
Emil Artin uses ``normal'' for $\gal$ in his Galois Theory \cite{artin}, as does Irving Kaplansky in Fields and Rings \cite{kaplansky}.
Artin and Kaplansky do not use the term ``Galois'' in this sense.
David Hilbert uses ``Galoisscher'' for $\nor{3}$ in Theorie der algebraischen Zahlenk\"{o}rper \cite{hilbert}. Normal doesn't appear in Hilbert's index.
Zariski and Samuel use ``normal'' for $\nor{1}$, pointing out the equivalence with $\nor{2}$, in their Commutative Algebra \cite{zariski}. They only use ``Galois" in the context of finite fields.

\begin{thm}\label{normal1}\label{galoisnormal}
$( \rca )$  For every field $F$ and every algebraic extension $K$ of $F$ we have:
\[\gal \to \nor{1} \leftrightarrow \nor{2} \to \nor {3} \to \nor {4}\]
Moreover, if $F$ is a subset of $K$ fixed by its embedding and $K$ is a subset of $\clo K$ fixed by its embedding, then the four versions of normal are equivalent.
If  the previous conditions hold and $K$ is separable,
then all five conditions are equivalent.
\end{thm}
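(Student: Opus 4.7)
The plan is to prove the chain $\gal \to \nor{1} \leftrightarrow \nor{2} \to \nor{3} \to \nor{4}$ in $\rca$, then close the loop under the subset hypothesis, and finally add $\gal$ when $K$ is separable.

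For $\gal \to \nor{1}$, given an irreducible $p \in F[x]$ with a root $\alpha \in K$, form $q(x) = \prod_{\gamma \in R}(x - \gamma)$ where $R$ is the finite set of roots of $p$ in $K$. Every $F$-automorphism of $K$ permutes $R$ and so fixes the coefficients of $q$, placing them in $F$ by $\gal$; since $q \in F[x]$ divides $p$ and is nontrivial, irreducibility of $p$ forces $p = q$, so $p$ splits in $K$. For $\nor{1} \to \nor{2}$, enumerate the minimal polynomials of the elements of $K$: by $\nor{1}$ each splits in $K$, and the roots exhaust $K$. For $\nor{2} \to \nor{1}$, any root $\alpha \in K$ of an irreducible $p$ lies in the splitting field $L$ of finitely many $f_i$; the classical finite-degree argument---extend $F(\alpha) \cong_F F(\beta)$ (Theorem \ref{hungerford18}) successively to the compositum $L \cdot F(\beta)$ and compare degrees---places every root of $p$ in $L \subseteq K$, and is provable in $\rca$ using Lemmas~2.6 and 2.8 of Friedman, Simpson, and Smith.

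For $\nor{2} \to \nor{3}$, suppose $K$ is the splitting field over $F$ of $\langle f_i \rangle$ and let $\varphi \colon K \to \clo K$ be an $F$-embedding. For each $i$, $\varphi$ injects the finite set of roots of $f_i$ in $K$ into the roots of $f_i$ in $\clo K$, which coincide with the canonical images of the roots of $f_i$ in $K$ since $K$ splits $f_i$; finite pigeonhole makes $\varphi$ biject this finite set, and since these roots generate $K$ over $F$, $\varphi$ is onto $K$. For $\nor{3} \to \nor{4}$, composing an $F$-automorphism $\varphi$ of $\clo K$ with the canonical embedding $K \hookrightarrow \clo K$ yields an $F$-embedding $K \to \clo K$, which by $\nor{3}$ is an $F$-automorphism of $K$; equivalently, $\varphi$ restricts to an $F$-automorphism of $K$.

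For $\nor{4} \to \nor{1}$ under the subset hypothesis, let $p \in F[x]$ be irreducible with roots $\alpha \in K$ and $\beta \in \clo K$. Theorem \ref{hungerford18} produces an $F$-isomorphism $\theta \colon F(\alpha) \to F(\beta)$ sending $\alpha$ to $\beta$; since $F(\alpha), F(\beta) \subseteq \clo K$ as subsets, the final clause of Theorem \ref{diamond} extends $\theta$ to an $F$-automorphism $\clo\theta$ of $\clo K$ within $\rca$---this is the step where the subset hypothesis substitutes for $\wkl$. By $\nor{4}$, $\clo\theta$ restricts to an $F$-automorphism of $K$, so $\beta = \clo\theta(\alpha) \in K$ and $p$ splits in $K$. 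Under the additional separability hypothesis, $\nor{1} \to \gal$ follows by the same extension trick: for $\alpha \in K \setminus F$, separability furnishes a second root $\beta \neq \alpha$ of the minimal polynomial of $\alpha$, which lies in $K$ by $\nor{1}$; the preceding construction then produces an $F$-automorphism of $K$ moving $\alpha$, so the fixed field of $\Aut_F(K)$ equals $F$. The main obstacle is the formal bookkeeping around fields versus their embedded images, particularly in forming the set $R$ in $\gal \to \nor{1}$ and in invoking the $\rca$-provable portion of Theorem \ref{diamond} in place of $\wkl$ when closing the loop of equivalences.
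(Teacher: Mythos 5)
Your outline follows the paper's own route almost step for step: the $\gal \to \nor{1}$ argument via $q(x)=\prod_{\gamma\in R}(x-\gamma)$, the proofs of $\nor{2}\to\nor{3}$ and $\nor{3}\to\nor{4}$, and the use of Theorem \ref{hungerford18} together with the $\rca$-provable final clause of Theorem \ref{diamond} to close the loop under the subset hypotheses (you prove $\nor{4}\to\nor{1}$ directly where the paper proves the contrapositive $\neg\nor{1}\to\neg\nor{4}$, a trivial difference) and to get $\gal$ from separability. Your sketch of $\nor{2}\to\nor{1}$ by extending $F(\alpha)\cong_F F(\beta)$ through a finite splitting field is at the same level of detail as the paper, which simply cites the classical proof in Van der Waerden.

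There is, however, one step that fails as stated: in $\nor{1}\to\nor{2}$ you ``enumerate the minimal polynomials of the elements of $K$.'' In $\rca$ such a sequence need not exist: assigning to $a\in K$ its minimal polynomial over $F$ amounts to deciding the degree of $a$ over $F$, i.e.\ to a dependence/splitting algorithm, and this is exactly the kind of object the paper shows is not available in general (for instance, with $K$ a copy of $\rat(\sqrt{p_n}\mid n\in\nat)$ and $F$ the extension given by Lemma \ref{sigma} whose image is $\rat(\sqrt{p_n}\mid n\in W)$ for a $\Sigma^0_1$ set $W$, computing minimal polynomials over $F$ decides membership in $W$). The repair is easy and is what the paper does: enumerate instead all polynomials in $F[x]$ whose images under $\psi$ split into linear factors in $K$ --- this is a $\Sigma^0_1$ condition, so the enumeration exists in $\rca$ --- and then use $\nor{1}$ only to argue that every $a\in K$ is a root of some listed polynomial (its minimal polynomial splits, hence appears in the list), without ever having to compute that minimal polynomial. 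Equivalently, for each $a\in K$ one may search for the first $q_a\in F[x]$ with $\psi(q_a)(a)=0$ and $\psi(q_a)$ split in $K$; $\nor{1}$ guarantees the search terminates. With that adjustment your argument matches the paper's proof.
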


\begin{proof}
We will work in $\rca$ throughout.  $\nor{1}$ can be deduced from $\nor{2}$ by a straightforward formalization
of the proof of the last theorem in section \S6.5 of Van der Waerden's text \cite{vdw}.  We now turn to the
left to right implications.

 To see that $\gal$ implies $\nor{1}$, let $K$ be a Galois extension of $F$.
  Suppose $p(x)$ is a monic irreducible polynomial over $F$ and that $\psi(p)(x)$ has a root in $K$.
  Let $\alpha_1,\dots,\alpha_k$ be all the roots of $\psi(p)(x)$ in $K$.
  Consider the polynomial $q(x) = (x-\alpha_1)\cdots(x-\alpha_k).$
  Every $F$-automorphism $\varphi$ of $K$ must permute the set $\{\alpha_1,\dots,\alpha_k\}$ and thus the coefficients of $q(x)$ are all fixed by $\varphi$.
  Since $K$ is a Galois extension of $F$, 
  it follows that $q(x) = \psi (r)(x)$ for some $r(x) \in F[x]$.
    Since $r(x)$ divides $p(x)$ and $p(x)$ is monic irreducible, it follows that $p(x) = r(x)$ and hence that $\psi(p)(x)$ (which is $q(x)$)  factors completely in $K$.

To see that $\nor{1}$ implies $\nor{2}$, let $\seq {p}{n}$ be an enumeration of all the elements of $F[x]$
whose images under $\psi$ are finite products of linear terms in $K[x]$.  This list consists of all those polynomials over $F$ whose images under $\psi$
split completely in $K$.  Since $\nor{1}$ holds, the splitting field of the images under $\psi$ of this sequence of polynomials is a subfield
of $K$.  Also, if $a \in K$, then the minimal polynomial of $a$ is $\psi(p_n)$ for some $n$.  Thus, $K$ is equal
to the splitting field of the images under $\psi$ of the sequence of polynomials.

To see that $\nor{2}$ implies $\nor{3}$, suppose $\nor{2}$ holds.  Let $\langle \clo K , \tau \rangle$ be an algebraic closure
of $K$, and let $\varphi :K \to \clo K$ be an $F$-embedding.  If $p(x) \in F[x]$ is a defining polynomial of $K$ and $\alpha$
is any root of $\psi (p)$, then there must be a root $\beta$ of $\psi (p )$ such that $\varphi (\alpha ) = \tau (\beta)$.
Since every element of $K$ is expressible as a sum of products of these roots, $\varphi$ must map $K$ into
the image of $K$ in $\clo K$ under $\tau$.  Thus we can find an automorphism $\varphi^* : K \to K$ such that
for all $k \in K$, $\varphi (k) = \tau (\varphi^* (k))$.  Since $\varphi$ fixes $F$, so does the restriction $\varphi^*$.

To see that $\nor{3}$ implies $\nor{4}$, suppose that $\varphi$ is an $F$-automorphism of $\clo K$.  Then the
restriction of $\varphi$ to $K$ is an $F$-embedding of $K$ into $\clo K$.  By $\nor{3}$, this restriction is an
$F$-automorphism of $K$, as desired.

To prove the penultimate sentence of the theorem, we will work in $\rca$, assume that $F\subset K\subset \clo K$,
and prove that the negation of  $\nor{1}$ implies the negation of $\nor{4}$.  Let $p$ be a polynomial irreducible over $F$ that does not
split in $K$ but has a root $\alpha$ in $K$.  Let $\beta$ be a root of $p$ not lying in $K$.  By Theorem \ref{hungerford18}
there is an $F$-isomorphism $\varphi: F(\alpha ) \to F ( \beta )$.  By the last sentence of Theorem \ref{diamond}, $\varphi$
extends to an $F$-automorphism of $\clo K$.  The restriction of $\varphi$ to $K$ maps $\alpha$ to $\beta$, so it is
not an $F$-automorphism of $K$.  Thus, $\nor{4}$ fails as desired.

To prove the final sentence of the theorem, we continue working in $\rca$. Assume that $F \subset K\subset \clo K$ and $\nor{4}$ holds.
Suppose $\alpha \in K\setminus F$.  Let $p$ be the minimal polynomial of $\alpha$ over $F$ and apply the separability of $F$
to find a root $\beta$ of $p$
that is not equal to $\alpha$.
By Theorem \ref{hungerford18}
there is an $F$-isomorphism $\varphi: F(\alpha ) \to F ( \beta )$.  By the last sentence of Theorem \ref{diamond}, $\varphi$
extends to an $F$-automorphism of $\clo K$.  By $\nor{4}$, this restricts to an $F$-automorphism of $K$ that moves $\alpha$.
So $K$ is a Galois extension of $F$.
\end{proof}

Each converse omitted from the preceding theorem is equivalent to $\wkl$.

\begin{thm}\label{normal2}\label{normalgalois}
$( \rca )$  The following are equivalent:
\begin{enumerate}
\item  $\wkl$.
\item  For every field $F$ and every algebraic extension $K$ of $F$, $\nor{4} \to \nor{1}$.
\item  For every field $F$ and every algebraic extension $K$ of $F$, $\nor{4} \to \nor{3}$.
\item  For every field $F$ and every algebraic extension $K$ of $F$, $\nor{3} \to \nor{1}$.
\item  For every field $F$ and every separable algebraic extension $K$ of $F$, $\nor{1} \to \gal$.\label{normal2v}
\end{enumerate}
In light of Theorem \ref{normal1}, the equivalences hold with $\nor{1}$ replaced by $\nor{2}$.
\end{thm}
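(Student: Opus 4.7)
My approach is to prove the equivalence of (1)--(5) in $\rca$ by establishing (1) $\Rightarrow$ each of (2), (3), (4), (5) and then each of these $\Rightarrow$ (1). For the forward direction, (1) $\Rightarrow$ (5) is immediate: $\gal$ is logically equivalent to ``every element outside $\psi(F)$ is moved by some $F$-automorphism of $K$,'' which is exactly Theorem \ref{mn}(\ref{mn3p}). For (1) $\Rightarrow$ (3), given $\nor{4}$ and an $F$-embedding $\varphi \colon K \to \clo K$, I would observe that $\clo K$ is algebraic and algebraically closed over $\varphi(K)$, apply Theorem \ref{diamond} (using $\wkl$) to extend $\varphi$ to an $F$-automorphism $\sigma$ of $\clo K$, and use $\nor{4}$ to restrict back to an $F$-automorphism of $K$ coinciding with $\varphi$. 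For (1) $\Rightarrow$ (4), given $\nor{3}$ and an irreducible $p \in F[x]$ with roots $\alpha \in K$ and $\beta \in \clo K$, Theorem \ref{hungerford18} furnishes $F(\alpha) \cong_F F(\beta)$, Theorem \ref{diamond} extends this to an $F$-automorphism $\sigma$ of $\clo K$, and $\nor{3}$ forces its restriction to $K$ to be an $F$-automorphism, placing $\beta$ in $\tau(K)$ as required for $\nor{1}$. Finally (1) $\Rightarrow$ (2) is the chain $\nor{4} \to \nor{3} \to \nor{1}$.

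For the reverse direction, (5) $\Rightarrow$ (1) is immediate from Theorem \ref{mn}'s reversal of (\ref{mn3p}). For (2) and (3) $\Rightarrow$ (1), I argue contrapositively in any model of $\rca + \neg \wkl$: Theorem \ref{mn} supplies a field $F$ that is not algebraically closed but whose only $F$-automorphism of $\clo F$ is the identity. Arranging (via a minor modification of the construction) that $\sqrt[3]{2} \notin F$, I set $K = F(\sqrt[3]{2})$. Then $\nor{4}$ for $K/F$ holds vacuously, since the only $F$-automorphism of $\clo K = \clo F$ is the identity, which restricts trivially. However, $\nor{1}$ fails because $x^3 - 2$ does not split in $K$, and $\nor{3}$ fails via the $F$-embedding $K \to F(\omega \sqrt[3]{2}) \subset \clo K$ provided by Theorem \ref{hungerford18}. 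This refutes both (2) and (3) in the model, completing those reversals.

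The main obstacle is the reversal (4) $\Rightarrow$ (1), because the construction above refutes only (2) and (3): both $\nor{3}$ and $\nor{1}$ fail simultaneously, leaving (4) vacuously true. To refute (4), I would replace $K$ with the larger $\Sigma^0_1$-$F$-subfield of $\clo F$ generated by the first root (in a fixed enumeration of $\clo F$) of every irreducible polynomial in $F[x]$, realized as a genuine algebraic extension of $F$ by Lemma \ref{sigma}. This $K$ still fails $\nor{1}$ (it contains $\sqrt[3]{2}$ but not its conjugates), while $\nor{3}$ now holds in the model: any non-canonical $F$-embedding $K \to \clo K$ would have to arise by extending a Hungerford embedding defined on a finitely-generated subfield $F(\alpha) \subseteq K$ to all of $K$, and by Theorem \ref{tower1} this extension problem is itself $\wkl$-equivalent, hence unsolvable in a model where $\wkl$ fails. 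The delicate step is verifying that $\nor{3}$ genuinely holds in the model---i.e., that no non-canonical $F$-embedding of $K$ exists as a set---by carefully tracking how the absence of $\wkl$ blocks the construction of such embeddings.
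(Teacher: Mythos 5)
Your forward directions and the reversal of (5) are sound and use the same tools as the paper (Theorem \ref{hungerford18}, Theorem \ref{diamond}, Theorem \ref{mn}), and your model-theoretic refutation of (2) and (3) in a model of $\neg\wkl$ is a legitimate strategy in outline. But even there the ``minor modification'' is carrying real weight: the field $F$ from the reversal of Theorem \ref{mn} is generated by primitive elements of splitting fields (together with products $v_{(i,g(j))}v_i$), so it contains many non-real algebraic numbers, and for $\neg\nor{1}$ you need not just $\sqrt[3]{2}\notin F$ but that $F(\sqrt[3]{2})$ contains no other root of the minimal polynomial of $\sqrt[3]{2}$ over $F$ --- in particular that no primitive cube root of unity lies in $F(\sqrt[3]{2})$. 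That is a linear-independence verification on the order of the paper's proof that $\sqrt{2}\notin F$, not a triviality. (A cleaner patch: use the paper's own $F=\rat(\sqrt{p_{f(i)}},\sqrt{2p_{g(i)}}\mid i\in\nat)$ and $K=F(\root 4 \of 2)$; realness of $F$ and Lemma \ref{rothcoro} give $\neg\nor{1}$ and $\neg\nor{3}$ outright, and any automorphism of $\clo\rat$ moving $\root 4 \of 2$ out of $K$ would send $\sqrt2\mapsto-\sqrt2$ and code a separating set, so $\nor{4}$ holds in the model.)

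The genuine gap is your reversal of (4). First, your enlarged $K$ (generated by the first root of every irreducible polynomial in $F[x]$) has no justification for failing $\nor{1}$: adjoining first roots of \emph{all} irreducible polynomials --- for instance of $x^2+x+1$ --- typically brings in roots of unity, and once a primitive cube root of unity is in $K$ the conjugates of $\sqrt[3]{2}$ are in $K$ as well, so your proposed witness to $\neg\nor{1}$ evaporates. Second, and more fundamentally, your argument that $\nor{3}$ holds in the model is a non sequitur: the fact that the general extension-of-embeddings principle of Theorem \ref{tower1} is equivalent to $\wkl$ only says that \emph{some} instance fails in a model of $\neg\wkl$; it does not prevent a particular non-automorphism $F$-embedding of your specific $K$ into $\clo K$ from existing as a set in that model. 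To make the model-theoretic route work you would have to show that \emph{every} such embedding computes a separating set, which is precisely the concrete coding argument the paper supplies (and you do not): the paper avoids your difficulty entirely by never exhibiting a model where $\nor{3}$ holds and $\nor{1}$ fails; instead it takes $K=F(\root 4 \of 2)$, notes $\neg\nor{1}$ is provable, applies the contrapositive of (4) to get an $F$-embedding moving some element of $K$ out of $K$, deduces $\psi(\root 4\of 2)=\pm i\root 4\of 2$ and hence $\psi(\sqrt2)=-\sqrt2$, and reads off the separating set $\{i\mid\psi(\sqrt{p_i})=\sqrt{p_i}\}$. As it stands, your proof of (4) $\Rightarrow$ (1) does not go through.
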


\begin{proof}
To prove that (1) implies (2), we will use $\wkl$ and $\neg \nor{1}$ to deduce $\neg \nor {4}$.
Let $\langle K , \psi \rangle$ be an algebraic extension of $F$.
On the basis of $\neg \nor{1}$, let $p(x)$ be an irreducible polynomial in $F[x]$ such that $\alpha \in K$
is a root of $\psi(p)(x)$ and $\psi(p)(x)$ does not split completely over $K$.  Let $q(x)$ be a nonlinear irreducible factor of $\psi(p)(x)$
in $K[x]$, and let $\beta$ be a root of $q(x)$.  By Theorem \ref{hungerford18}, $F( \alpha ) \cong _ F F(\beta)$.
Using $\wkl$, we can apply Theorem \ref{diamond} and extend this isomorphism to an $F$-automorphism
of $\clo K$.  Since this automorphism does not restrict to an automorphism of $K$, we have $\neg \nor {4}$.

By Theorem \ref{normal1}, $\rca$ proves $\nor{1} \to \nor{3}$.  Thus $\rca$ proves that (2) implies (3).
Before dealing with (4), we will prove that (3) implies (1).  Our plan is to assume the contrapositive
of (3), that is that $\neg \nor{3} \to \neg\nor{4}$, and construct a separating set for the ranges of disjoint injections.
Let $f$ and $g$ be disjoint injections and without loss of generality, assume that $0$ is not in either of their ranges.
Suppose $\clo \rat$ is an algebraic closure of a real closure of $\rat$ in which the positive roots and the elements
$\root 4 \of 2$, $-\root 4 \of 2$, $i\root 4 \of 2$, and $-i\root 4 \of 2$ have been designated.  Using the notation for primes from
the reversal of Theorem \ref{diamond}, define
$F = \rat (\sqrt{\mathstrut p_{f(i)}} , \sqrt{\mathstrut 2p_{g(i)}} \mid i \in \nat )$ and consider $F(\root 4 \of 2 )$.
$\rca$ proves that the usual $F$-isomorphism from $F(\root 4 \of 2 )$ to $F(i\root 4 \of 2 )$ exists and that
it is an embedding of $F(\root 4 \of 2 )$ into $\clo F$ which is not an automorphism of $F(\root 4 \of 2 )$.
Since $\neg \nor{3}$ holds, we may apply $\neg \nor{4}$ to find an $F$-automorphism of $\psi$ of $\clo F$
which maps some element of $F(\root 4 \of 2 )$ to an element not in $F(\root 4 \of 2 )$.  Thus
$\psi (\root 4 \of 2 )= \pm i \root 4 \of 2$ and so $\psi (\sqrt 2 ) = - \sqrt 2 $.
As in the reversal of Theorem \ref{diamond},
$S = \{ i \mid \psi ( \sqrt{p_i} ) = \sqrt{p_i} \}$ is a separating set for the ranges of $f$ and $g$.

Consider item (4).  Since Theorem \ref{normal1} shows $\nor{3} \to \nor{4}$ and by (2), $\wkl$ implies
that $\nor{4} \to \nor{1}$, $\wkl$ implies (4).  To prove the converse, we will use $\neg \nor{1} \to \neg \nor{3}$ to
find a separating set for the ranges of disjoint injections with nonzero ranges.
Let $f$, $g$, and $F$ be as in the preceding paragraph and let $K = F( \root 4 \of 2 )$.
The polynomial $x^4 -2$ has a root in $K$, but $x^4 - 2$ does not split
in $K$, since $i\root 4 \of 2$ is not in $K$.
Since $\neg \nor{1}$ holds for $F$ and $K$, by the contrapositive of (4), $\neg \nor {3}$ holds.
Let $\psi: K \to \clo K$ be an $F$-embedding which maps some element of $K$ outside $K$.
Then $\psi ( \root 4 \of 2 ) = \pm i\root 4 \of 2$, so
$\psi ( \sqrt{2} ) = - \sqrt{2}$ and $S = \{i | \psi(\sqrt{p_i}) = \sqrt{p_i} \}$ is a separating set.

The equivalence of $\wkl$ and (\ref{normal2v}) is immediate from part (\ref{mn3p}) of Theorem~\ref{mn}, using terminology from
Definition \ref{defngalois}.
\end{proof}

We conclude this section by recasting Theorem \ref{tower1} using normal field extensions.  The resulting formulation avoids root moduli, but is interestingly weaker
than the unbounded statement in Theorem \ref{tower2}.

\begin{thm}\label{normaltower}
$( \rca )$
The following are equivalent.
\begin{enumerate}
\item \textup{$\wkl$}.

\item  Suppose that $J$ and $K$ are $\nor{1}$ algebraic extensions of $F$.  If $F(k) \preceq_F J$ for all $k \in K$
 then $K \preceq_F J$.

\end{enumerate}
Moreover, the equivalence holds if $\nor{1}$ is replaced by $\nor{2}$, $\nor{3}$, or $\nor{4}$.
If $K$ and $J$ are separable extensions, then the equivalence holds if $\nor{1}$ is replaced by $\gal$.
\end{thm}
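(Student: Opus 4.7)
The approach follows the pattern of Theorem~\ref{tower1}: in each case the hypotheses on $J$ and $K$ let us manufacture an $F$ embedding bound, after which the conclusion follows from Theorem~\ref{tower1}, and the reversal uses a pair of algebraic closures to force $\wkl$ via Theorem~\ref{antisym} together with uniqueness of algebraic closures.

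For the forward direction, assume $\wkl$ along with the hypotheses on $J$ and $K$. Combining Theorem~\ref{normal1} (which gives $\gal \to \nor{1}$ in $\rca$) with Theorem~\ref{normal2} (which, under $\wkl$, gives $\nor{2}, \nor{3}, \nor{4} \to \nor{1}$), we may assume without loss of generality that $J$ is $\nor{1}$.  For each $k \in K$, locate an irreducible $p_k \in F[x]$ vanishing at $k$ using Lemma 2.8 of \cite{fss}.  The hypothesis $F(k) \preceq_F J$ provides a root of $\psi_J(p_k)$ in $J$, so $\nor{1}$ of $J$ forces $\psi_J(p_k)$ to split into linear factors in $J[x]$; a bounded search for such a factorization returns the complete finite root set, which we package as an $F$ embedding bound $f_K : K \to J^{<\nat}$.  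Theorem~\ref{tower1} now yields $K \preceq_F J$.

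For the reverse direction, I imitate the reversal of Theorem~\ref{tower1}(3).  Given $F$, construct two algebraic closures $K$ and $J$ of $F$, each arranged so that $F$ is a literal subset fixed by its embedding.  Being algebraically closed, both satisfy $\nor{1}$ trivially in $\rca$, and hence $\nor{2}$, $\nor{3}$, $\nor{4}$ by Theorem~\ref{normal1}.  Under the subset arrangement and characteristic zero separability, the last sentence of Theorem~\ref{normal1} additionally gives $\gal$ in $\rca$.  For each $k \in K$, the hypothesis $F(k) \preceq_F J$ is witnessed by sending $k$ to a root in the algebraically closed $J$ of an $F$-irreducible polynomial annihilating $k$.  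The given statement therefore yields $K \preceq_F J$, and a symmetric application gives $J \preceq_F K$, whence $K \cong_F J$ by Theorem~\ref{antisym}.  Applying this to the specific $F$ used in the reversal of Theorem 3.3 of \cite{fss} then produces the required separating set and establishes $\wkl$.

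The main obstacle is verifying the reverse-direction witnesses in $\rca$, particularly for the $\gal$ version: Theorem~\ref{mn} shows that $\gal$ for $\clo F$ is not generally available in $\rca$, so we must carefully arrange $F \subseteq K$ and $F \subseteq J$ as literal subsets so that Theorem~\ref{normal1}'s last sentence applies.  Secondary bookkeeping ensures that minimal-polynomial location and root searches inside the algebraic closures remain effective, which is supplied by Lemmas 2.6 and 2.8 of \cite{fss} combined with algebraic closedness of $K$ and $J$.
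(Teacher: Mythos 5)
Your forward direction follows the paper's route (use normality of $J$ to manufacture an $F$ embedding bound of $K$ into $J$ and then quote Theorem~\ref{tower1}), but one step as written does not go through: you cannot ``locate an irreducible $p_k \in F[x]$ vanishing at $k$ using Lemma 2.8 of \cite{fss}.'' That lemma concerns fields with splitting algorithms (finitely generated extensions of $\Qb$), not an arbitrary countable $F$; indeed, uniformly computing minimal polynomials over the fields used in the reversals of this paper would already compute a separating set. The repair is easy and is what the paper's ``first observation'' amounts to: for each $k$, search for any nonzero $q \in F[x]$ annihilating $k$ together with a complete factorization of the image of $q$ into linear factors in $J[x]$. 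The search terminates because the minimal polynomial of $k$ (which exists as a finite object by $\Sigma^0_1$ minimization, though not uniformly in $k$) has a root in $J$ by the hypothesis $F(k) \preceq_F J$ and hence splits by $\nor{1}$ of $J$; the roots of the image of $q$ then contain all roots of the minimal polynomial and form the required bound. (Also, this search is unbounded; only its termination is guaranteed.)

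The reversal contains a genuine gap. The paper's reversal is exactly that of Theorem~\ref{tower1}: apply statement (2) to two arbitrary algebraic closures of $F$, which provably in $\rca$ satisfy $\nor{1}$ (hence $\nor{2}$, $\nor{3}$, $\nor{4}$) and provably admit the pointwise embeddings $F(k) \preceq_F J$; mutual embeddability plus Theorem~\ref{antisym} gives $F$-isomorphism of the closures, and $\wkl$ follows from Theorem 3.3 of \cite{fss}. Your additional move --- arranging $F$ as a literal subset of $K$ and $J$ fixed by its embedding so that the last sentence of Theorem~\ref{normal1} yields $\gal$ --- defeats the coding twice over. First, $\rca$ cannot in general produce an algebraic closure containing $F$ as a $\Delta^0_1$ subset: such a presentation gives $F$ a splitting algorithm (factor completely in the closure and test which products of linear factors have coefficients in $F$), and the field used in the reversal of Theorem 3.3 of \cite{fss} has no splitting algorithm in the minimal model, so your construction is unavailable precisely for the field you need. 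Second, even where the subset arrangement is available, the last part of Theorem~\ref{diamond} (applied to $F$ itself, sitting as a subset of each closure) shows that $\rca$ already proves the two closures are $F$-isomorphic, so no separating set, and hence no $\wkl$, can be extracted from such instances; the coding works only because membership in $F$ is $\Sigma^0_1$ but not $\Delta^0_1$ inside the closures. Your underlying worry is legitimate --- $\rca$ does not prove that $\clo F$ is a Galois extension of the coding field $F$ (by Theorem~\ref{mn} that is tantamount to $\wkl$), and the paper's one-line proof is silent on the $\gal$ case of the reversal --- but the fix cannot be one that makes $F$ decidable in its closures.
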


\begin{proof}
The proof follows from two simple observations.  Given $\nor{1}$ field extensions as in (2), $\rca$ can prove the existence of $F$-embedding bounds of $J$ into $K$ and of $K$ into $J$.  The forward
implication follows immediately from Theorem \ref{tower1}.  The proof of the reversal of Theorem \ref{tower1} also proves this reversal,  since every algebraic closure
of $F$ satisfies $\nor{1}$.
\end{proof}

\section {Galois correspondence theorems}\label{sectiongalcor}

Lemma 2.11 of Friedman, Simpson, and Smith \cite{fss} shows that Galois correspondence for field extensions of finite degree is
provable in $\rca$.  In this section, we analyze Galois correspondence for infinite extensions.
If $\langle E, \psi \rangle$ is an algebraic extension of $F$ and $\langle K , \varphi \rangle$ is an algebraic extension of $E$, then
$\langle K , \varphi \circ \psi \rangle$ is an algebraic extension of $F$.  In this case we say $E$ is an intermediate extension between $F$ and $K$.
By Lemma \ref{sigma}, every $\Sigma^0_1\text{-}F$-subfield of $K$ is the isomorphic image of an intermediate extension field
between $F$ and $K$.

\begin{thm}\label{subgalois}$(\rca )$
  The following are equivalent:
  \begin{enumerate}
  \item $\wkl$
  \item If $K$ is a Galois extension of $F$ and $E$ is an intermediate extension, then $K$ is a Galois extension of $E$.
  \end{enumerate}
\end{thm}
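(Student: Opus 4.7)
For $(1)\Rightarrow(2)$, I plan to work in $\wkl$ with $K$ Galois over $F$ and $\langle E,\psi\rangle$ an intermediate extension (write $\langle K,\varphi\rangle$ for the extension of $E$, so $K$ is an extension of $F$ via $\varphi\circ\psi$). The first step is to verify that $K$ satisfies $\nor{1}$ over $E$: given an irreducible $q\in E[x]$ with $\varphi(q)(\alpha)=0$ for some $\alpha\in K$, let $p\in F[x]$ be the minimal polynomial of $\alpha$ over $F$; then $q$ divides $\psi(p)$ in $E[x]$, and since Theorem \ref{galoisnormal} gives Galois implies $\nor{1}$ over $F$, $\varphi(\psi(p))$ splits in $K$, whence so does $\varphi(q)$. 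To conclude that $K$ is Galois over $E$, fix any $\alpha\in K$ outside the image of $E$ and invoke Theorem \ref{mn}(\ref{mn3p}), which under $\wkl$ supplies an $E$-automorphism of $K$ moving $\alpha$; the fixed field of all $E$-automorphisms of $K$ is therefore exactly $E$.

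For $(2)\Rightarrow(1)$, my plan follows the reversal patterns of Theorems \ref{diamond} and \ref{mn}. Given injections $f,g\colon\Nb\to\Nb$ with disjoint ranges both avoiding $0$, and with $p_i$ the $i$th prime (so $p_0=2$), I will use Lemma \ref{sigma} to build
\[K=\rat\bigl(\sqrt{\mathstrut p_i}\mid i\in\nat\bigr)\quad\text{and}\quad E=\rat\bigl(\sqrt{\mathstrut p_{f(i)}},\sqrt{\mathstrut 2p_{g(i)}}\mid i\in\nat\bigr)\]
as $\Sigma^0_1$-subfields of $\clo\rat$, noting that $E\subseteq K$ because $\sqrt{2p_{g(i)}}=\sqrt{2}\sqrt{p_{g(i)}}$. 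Then I verify in $\rca$ that $K$ is Galois over $\rat$: any $k\in K\setminus\rat$ lies in some finite subextension $\rat(\sqrt{p_0},\dots,\sqrt{p_n})$, and finite Galois theory (Lemma 2.11 of \cite{fss}) supplies an $\rat$-automorphism of that subextension moving $k$, which extends to $K$ by fixing the remaining $\sqrt{p_j}$. Lemma \ref{rothcoro} yields $\sqrt{2}\in K\setminus E$.

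Hypothesis (2) then produces an $E$-automorphism $\varphi$ of $K$ with $\varphi(\sqrt{2})\neq\sqrt{2}$, and since $x^2-2$ is the minimal polynomial of $\sqrt{2}$ over $E$, we get $\varphi(\sqrt{2})=-\sqrt{2}$. The set $S=\{i\mid\varphi(\sqrt{p_i})=\sqrt{p_i}\}$ exists by $\Delta^0_1$ comprehension: for $i=f(j)$, $\sqrt{p_i}\in E$ so $i\in S$, and for $i=g(j)$ the identity $\varphi(\sqrt{2p_i})=\sqrt{2p_i}$ combined with $\varphi(\sqrt{2})=-\sqrt{2}$ forces $\varphi(\sqrt{p_i})=-\sqrt{p_i}$, so $i\notin S$. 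Thus $S$ separates the ranges of $f$ and $g$, giving $\wkl$. The main obstacle will be the $\rca$ verification that $K$ is Galois over $\rat$ together with careful bookkeeping around the embedding identifications in the $\nor{1}$ argument; once those are in place, the extraction of the separating set follows the template of the paper's earlier reversals.
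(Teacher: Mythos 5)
Your proposal is correct, but the reversal takes a genuinely different route from the paper's. In the forward direction you and the paper do essentially the same thing: transfer a normality condition from $F$ to the intermediate field $E$ (you via $\nor{1}$ and the minimal-polynomial divisibility argument, the paper via the even quicker observation that a $\nor{2}$ defining sequence over $F$ is already a defining sequence over $E$) and then invoke the $\wkl$-powered implication ``normal implies Galois'' (your Theorem \ref{mn}(\ref{mn3p}) is the same fact as Theorem \ref{normalgalois}(\ref{normal2v}) used by the paper). For the reversal, however, the paper simply reuses the reversal of Theorem \ref{mn}: it takes $K=\clo\rat$ and lets $E$ be the field $F=\Qb(v_{(i,f(j))},v_{(i,g(j))}\cdot v_i\mid i,j)$ built there, observes that $\clo\rat$ is Galois over $\rat$, applies (2) to get a nontrivial $F$-automorphism of $\clo\rat$, and extracts the separating set exactly as in that proof. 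You instead take $K=\rat(\ssqrt{p_i}\mid i\in\nat)$ with the simpler intermediate field $E=\rat(\ssqrt{p_{f(i)}},\ssqrt{2p_{g(i)}}\mid i)$ from the reversal of Theorem \ref{diamond}, which buys a much more elementary extraction (no $v_i$ machinery, no algebraic closure as the top field) at the price of the one verification you correctly flag: an $\rca$ proof that $\rat(\ssqrt{p_i}\mid i)$ is Galois over $\rat$. That verification is real but doable with the paper's own tools: finite Galois theory (Lemma 2.11 of \cite{fss}) handles the finite subextension, and the extension of the resulting automorphism to all of $K$ by fixing the remaining $\ssqrt{p_j}$ is well-defined because the products of distinct $\ssqrt{p_i}$ form a basis, which follows from Lemma \ref{roththm} just as in the basis claims proved inside Theorem \ref{mn}; note the paper's route has a symmetric obligation (its ``it is not hard to see that $\clo\rat$ is a Galois extension of $\rat$'' rests on the $\rca$ cases at the end of Theorem \ref{diamond}). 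The final computation forcing $\varphi(\ssqrt{2})=-\ssqrt{2}$ and the $\Delta^0_1$ definition of $S=\{i\mid\varphi(\ssqrt{p_i})=\ssqrt{p_i}\}$ match the template of Theorem \ref{diamond} and are correct.
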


\begin{proof}
By Theorem \ref{galoisnormal}, if $K$ is a Galois extension of $F$, then it is a $\nor{2}$-normal extension.
  It is easy to see that if $K$ is a $\nor{2}$-normal extension of $F$ and $E$ is an intermediate extension, then $K$ is necessarily a $\nor{2}$-normal extension of $E$.
  Therefore, (1) implies (2) by Theorem~\ref{normalgalois}.

  The fact that (2) implies (1) follows from the reversal of Theorem~\ref{mn}.
  The field $F$ constructed there is strictly intermediate between $\clo{\rat}$ and $\rat$.
  It is not hard to see that $\clo{\rat}$ is a Galois extension of $\rat$.
  By (2), $\clo{\rat}$ is a Galois extension of $F$, so there must be a $F$-automorphism of $\clo \rat$ that is not the identity.  As in the
  proof of Theorem~\ref{mn}, this automorphism encodes the desired separating set.
\end{proof}

We now turn to the group-theoretic aspects of Galois theory.
The group $\Sym$ of permutations of $\nat$ has a topology which makes it into a complete separable metric space with respect to the distance $$d(\varphi,\psi) = \inf\{2^{-n} : (\forall i < n)(\varphi(i) = \psi(i) \land \varphi^{-1}(i) = \psi^{-1}(i))\}.$$
Note that composition and inversion are both continuous operations with respect to this topology.
Furthermore, $\Sym$ is easily understood even in $\rca$ with the usual representation of complete metric spaces in subsystems of second-order arithmetic.  See section II.5 of Simpson's book \cite{simpson}.

If $F$ is a subfield of $K$, the class $\Aut(K/F)$ of $F$-automorphisms of $K$ corresponds to a closed subgroup of $\Sym$.
Indeed, if $\varphi$ is a permutation of $K$ which is not an $F$-automorphism, then there is a finite initial segment of $\varphi$ that cannot be extended to an $F$-automorphism of $K$.
The Galois correspondence says that there is an inclusion-reversing correspondence between intermediate fields $F \subset E \subset K$ and closed subgroups of $\Aut(K/F)$; this correspondence is provable in $\wkl$.

\begin{thm}$(\wkl)$\label{weakgalois}
  \emph{(Galois Correspondence.)}
  Suppose $K$ is a Galois extension of $F$.
  \begin{itemize}
  \item For every intermediate extension $E$ between $F$ and $K$, $K$ is a Galois extension of $E$, and $\Aut(K/E)$ is a closed subgroup of $\Aut(K/F)$.
  \item For every closed subgroup $H$ of $\Aut(K/F)$, there is an 
 intermediate extension $E$
 such that $K$ is a Galois extension of $E$,
  and $H = \Aut (K/E)$.
  \end{itemize}
\end{thm}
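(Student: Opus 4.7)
The plan is to treat the two bullets separately.

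The first bullet is largely bookkeeping. Theorem \ref{subgalois}, available in $\wkl$, gives that $K$ is Galois over any intermediate $E$. Since $F \subseteq E$, every $E$-automorphism is automatically an $F$-automorphism, so $\Aut(K/E)$ is a subgroup of $\Aut(K/F)$. For closedness, $\varphi$ lies in $\Aut(K/E)$ iff $\varphi(e) = e$ for every $e \in E$, and each such equality is determined by the initial segment $\varphi\restriction(e+1)$; thus the tree for $\Aut(K/E)$ is a $\Delta^0_1$ subtree of the tree for $\Aut(K/F)$, so no extra comprehension is needed.

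For the second bullet, let $H$ be a closed subgroup of $\Aut(K/F)$, presented via a pruned tree $T_H$ bounded by the tree for $\Aut(K/F)$ (so $T_H$ is finitely branching at each node). I would construct the fixed field $E = K^H$ as follows. Because $K$ is Galois over $F$, Lemma 2.8 of \cite{fss} lets $\rca$ compute, for each $k \in K$, the finite set of conjugates of $k$ over $F$ lying in $K$; the orbit $H\cdot k$ is contained in this finite set. Deciding whether a given conjugate $k'$ of $k$ lies in $H\cdot k$ reduces to asking whether a certain bounded subtree of $T_H$ has an infinite path, which is decidable in $\wkl$. Hence the predicate $H\cdot k = \{k\}$ is $\Delta^0_1$, and $E = \{k \in K : H\cdot k = \{k\}\}$ exists as a set. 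Routine checks show that $E$ is a subfield containing $F$; Theorem \ref{subgalois} then gives that $K$ is Galois over $E$, and the inclusion $H \subseteq \Aut(K/E)$ is immediate from the definition of $E$.

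The heart of the proof is the reverse inclusion $\Aut(K/E) \subseteq H$. Fix $\sigma \in \Aut(K/E)$; closedness of $H$ reduces the task to showing $\sigma\restriction n \in T_H$ for every $n$. Enumerating $K$ as $\seq{k}{i}$ and applying the primitive element theorem (Lemma 2.6 of \cite{fss}), obtain $\alpha \in K$ with $E(\alpha) \supseteq \{k_0,\dots,k_{n-1}\}$. The polynomial $q(x) = \prod_{\beta \in H\cdot\alpha}(x-\beta)$ is permuted coefficient-wise by every element of $H$, so $q \in E[x]$. It is monic and vanishes at $\alpha$, so the minimal polynomial $m_\alpha$ of $\alpha$ over $E$ divides $q$; conversely every element of $H\cdot\alpha$ is a conjugate of $\alpha$ over $E$ and hence a root of $m_\alpha$, so $q$ divides $m_\alpha$. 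Thus $q = m_\alpha$, and $\sigma(\alpha)$, being a root of $m_\alpha$ in $K$, equals $\tau(\alpha)$ for some $\tau \in H$. Since $\sigma$ and $\tau$ both fix $E$ and agree on $\alpha$, they agree on all of $E(\alpha) \supseteq \{k_0,\dots,k_{n-1}\}$, so $\sigma\restriction n = \tau\restriction n \in T_H$, as required.

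The principal obstacle is organizing the orbit computation cleanly enough to produce $E$ as an actual set in $\wkl$; without this the fixed-field construction dissolves into an unbounded arithmetical definition that would demand $\aca$. Once the bounded König's lemma step extracts $H$-orbits uniformly, the classical argument identifying $q$ with $m_\alpha$ and transporting it to agreement of $\sigma$ and $\tau$ on $E(\alpha)$ proceeds within the uniform field-theoretic apparatus of \cite{fss}, and the rest is bookkeeping.
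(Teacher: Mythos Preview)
Your overall architecture matches the paper's, but there is a genuine gap in the second bullet: the claim that orbit membership is ``decidable in $\wkl$'' and hence that $E$ is $\Delta^0_1$ is incorrect. What $\wkl$ gives you is that a bounded subtree of $T_H$ has an infinite path if and only if the subtree is infinite; the latter is a $\Pi^0_1$ condition, not a decidable one. Consequently ``$k' \notin H\cdot k$'' becomes $\Sigma^0_1$ (the subtree is finite), and the fixed-field predicate ``$H\cdot k = \{k\}$'' is a bounded conjunction of such statements, hence $\Sigma^0_1$ --- not $\Delta^0_1$. So $E$ need not exist as an actual subset of $K$. The paper handles this by observing exactly this $\Sigma^0_1$ definition (phrased as: some level $m$ of $T_H$ has all nodes fixing $k$) and then invoking Lemma~\ref{sigma} to produce an intermediate \emph{extension} isomorphic to the $\Sigma^0_1$-subfield, which is all the theorem asks for. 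Your remark that without care the definition ``dissolves into an unbounded arithmetical definition'' is on target, but $\Sigma^0_1$ via Lemma~\ref{sigma}, rather than $\Delta^0_1$, is the actual landing point. (Also, your appeal to Lemma~2.8 of \cite{fss} for conjugate sets is misplaced --- that lemma concerns finite extensions; the paper instead searches for a polynomial over $F$ that splits in $K$, which exists by $\nor{1}$.)

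For the reverse inclusion $\Aut(K/E)\subseteq H$, your argument via the orbit polynomial $q(x)=\prod_{\beta\in H\cdot\alpha}(x-\beta)=m_\alpha$ is a legitimate alternative to the paper's route through finite Galois correspondence (Lemma~2.11 of \cite{fss}). Note, though, that to form $q$ you need the finite set $H\cdot\alpha$; once the $\Delta^0_1$ claim is abandoned, you should justify this via bounded $\Pi^0_1$ comprehension (a finite subset of the conjugates of $\alpha$ defined by a $\Pi^0_1$ condition). The paper's version sidesteps this by restricting to a finite splitting field $L$ and arguing abstractly that the restricted group $\clo H$ has fixed field $E$, whence $\clo H = \Aut(L/E)$ by finite Galois theory; your orbit-polynomial argument is essentially the same computation unpacked.
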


\begin{proof}
  The first part of the theorem is immediate from Theorem~\ref{subgalois}, but the second part requires proof.
  
  The first observation is that $\Aut(K/F)$ is a bounded subgroup of $\Sym$.
  Indeed, since $K$ is a normal extension of $F$, for every $k \in K$, we can effectively find a polynomial $p_k(x) \in F[x]$ such that $p_k(k) = 0$ and $p_k(x)$ splits completely in $K$.
  Consequently, $\rca$ proves the existence of an $F$ embedding bound, $b: K \to K^{<\nat}$.  Any $F$-automorphism of $K$ must send $k$ to some element of $b(k)$.
  By the last sentence of Definition \ref{feb}, if $\varphi$ is an $F$-automorphism of $K$, then $\varphi (k) \le b(k)$ for all $k\in K$.

Applying $\Delta^0_1$-comprehension, we can prove the existence of a $b$-bounded tree of initial segments of elements of $\Aut(K/F)$.
Briefly, given an enumeration $\langle k_i \rangle_{i\in \nat}$ of $K$, place $\sigma$ in the tree if for all $i,j < \lth (\sigma )$ we have (1) $\sigma (i) \le b(k_i )$,
(2) if $j$ witnesses that $k_i \in F$ then $\sigma(i) = k_i$, and (3) $\sigma$ preserves field operations.
  A closed subgroup $H$ of $\Aut(K/F)$ corresponds to branches through a $b$-bounded subtree $T_H$.
  By $\wkl$, an element $k_n$ of $K$ is fixed by every automorphism in $H$ if and only if there is a level $m > n$ such that every element of $T_H \cap \nat^{m}$ fixes $n$.
  Since $T_H$ is $b$-bounded, this is a $\Sigma^0_1$ definition of the fixed field $K^H$.  By Lemma \ref{sigma}, there is an isomorphic intermediate extension
  $\langle E, \tau \rangle$.  By Theorem \ref{subgalois}, $K$ is a Galois extension of $E$.

  It remains to see that $H = \Aut(K/E)$.
  The inclusion $H \subset \Aut(K/E)$ is clear, so suppose that $\psi$ is an $E$-automorphism of $K$.
  We need to show that every initial segment of $\psi$ is in the tree $T_H$.
Let $p(x)$ be a polynomial in $E(x)$ such that $\tau (p)$ splits in $K$ and the roots of $\tau (p)$ include
$k_0 , \dots , k_{n-1}$.  Let $L $ be the splitting field of $\tau (p)$.
  Then $\psi$ restricts to an $E$-automorphism $\clo{\psi}$ of $L$.
  Every element $\varphi$ of $H$ also restricts to an $E$-automorphism $\clo{\varphi}$ of $L$ and these restrictions form a group $\clo{H}$ of automorphisms of $L$.
  Furthermore,
  $E$ is the subfield of $L$ fixed by $\clo H$ since $E$ is the subfield of $K$ fixed by $H$.
  It follows from finite Galois theory that $\clo{H} = \Aut(L/E)$ \cite{fss}*{Lemma~2.11}, which means that $\clo{\psi} = \clo{\varphi}$ for some $\varphi \in H$.
  Since $k_0,k_1,\dots,k_{n-1} \in L$, it follows that $\psi(m) = \varphi(m)$ for all $m < n$ and hence that the initial segment of $\psi$ with length $n$ belongs to $T_H$.
\end{proof}

\noindent
We already saw in Theorem~\ref{normalgalois} that the first part of the Galois correspondence requires $\wkl$ (though $\Aut(K/E)$ is always a closed subgroup of $\Aut(K/F)$).
In the second part of the correspondence theorem, $E$ is essentially the fixed field for $H$, and the fixed field associated with a closed subgroup of $\Aut(K/F)$ is difficult
to define in subsystems weaker than $\wkl$.

Although $\Aut(K/F)$ is always a closed subgroup of $\Sym$, this does not mean that $\Aut(K/F)$ is a complete separable metric space like $\Sym$.
Indeed, $\Aut(K/F)$ could fail to have a countable dense subset.  The following definitions are related to those of Brown \cite{brown}.

\begin{defn}$(\rca)$
  Let $F$ be a subfield of $K$.
  \begin{itemize}
  \item We say  $\Aut(K/F)$ is {\sl separably closed} if there is a sequence $\seq{\varphi}{i}$ of elements of $\Aut(K/F)$ such that for every $\psi \in \Aut(K/F)$ and every $n \in \nat$, there is an $i \in \nat$ such that $d(\varphi_i,\psi) \leq 2^{-n}$.
  \item We say $\Aut(K/F)$ is {\sl separably closed and totally bounded} if there is a sequence $\seq{\varphi}{i}$ of elements of $\Aut(K/F)$ and a function $b:\nat\to\nat$ such that for every $\psi \in \Aut(K/F)$ and every $n \in \nat$, there is an $i \leq b(n)$ such that $d(\varphi_i,\psi) \leq 2^{-n}$.
  \end{itemize}
\end{defn}

When $\Aut(K/F)$ is separably closed, this group can also be understood using the usual representation of complete metric spaces in second-order arithmetic.
However, this is not always the case unless we assume $\aca$ (in which case every closed subgroup of $\Sym$ is separably closed).

\begin{lemma}$(\rca)$\label{sepgalois}
  Suppose $K$ is a Galois extension of $F$. Then the following are equivalent:
  \begin{enumerate}
  \item $\Aut(K/F)$ is separably closed and totally bounded.
  \item $\Aut(K/F)$ is separably closed.
  \item $F$ is a subset of $K$ fixed by its embedding.
  \end{enumerate}
\end{lemma}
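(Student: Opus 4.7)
The plan is to prove the cycle $(1) \Rightarrow (2) \Rightarrow (3) \Rightarrow (1)$. The first implication is immediate from the definitions, since any separably closed and totally bounded group is in particular separably closed.

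For $(2) \Rightarrow (3)$, I would take a dense sequence $\langle \varphi_i \rangle_{i \in \nat}$ in $\Aut(K/F)$ and establish the identity
\[
\psi(F) = \{k \in K : (\forall i)(\varphi_i(k) = k)\}.
\]
One inclusion is immediate. For the other, given $k \in K \setminus \psi(F)$, the Galois property supplies some $\varphi \in \Aut(K/F)$ with $\varphi(k) \neq k$; any $\varphi_i$ with $d(\varphi_i,\varphi) \leq 2^{-(k+1)}$ then agrees with $\varphi$ at $k$ and so also moves $k$. The right-hand side is $\Pi^0_1$ in $K$, while $\psi(F)$ is $\Sigma^0_1$ as the range of $\psi$, so $\Delta^0_1$-comprehension produces $\psi(F)$ as an explicit subset of $K$, allowing us to identify $F$ with $\psi(F) \subseteq K$ under the inclusion embedding.

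For $(3) \Rightarrow (1)$, I would assume $F \subseteq K$ with $\psi$ the inclusion and exploit the fact that $K$ is $\nor{2}$-normal by Theorem~\ref{galoisnormal}. Fix a sequence $\langle p_m \rangle_{m \in \nat}$ in $F[x]$ that splits completely in $K$ and whose roots generate $K$, and let $L_n$ be the splitting field inside $K$ of $p_0,\dots,p_{n-1}$. Each $L_n$ is a finite Galois extension of $F$, and by Lemma~2.11 of \cite{fss} the finite group $\Aut(L_n/F)$ is enumerable in $\rca$. For each $\sigma \in \Aut(L_n/F)$, I would construct an extension $\tilde\sigma \in \Aut(K/F)$ by recursion on $m \geq n$: having obtained $\sigma_m \in \Aut(L_m/F)$, take $\sigma_{m+1}$ to be the first element of $\Aut(L_{m+1}/F)$ in the fixed enumeration that restricts to $\sigma_m$ (at least one such exists by surjectivity of the restriction map for normal extensions), and let $\tilde\sigma$ be the union. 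To witness density at level $2^{-n}$, given $\varphi \in \Aut(K/F)$ I would choose $n'$ so large that $L_{n'}$ contains $\{0,\dots,n-1\}\cap K$; then $\widetilde{\varphi|_{L_{n'}}}$ agrees with $\varphi$ on $L_{n'}$ and, since $L_{n'}$ is mapped to itself by any $F$-automorphism of $K$ by normality, also has the matching inverse on that range. The total bound $b(n)$ comes from the codes of all pairs $(n',\sigma)$ with $\sigma \in \Aut(L_{n'}/F)$, available because $\rca$ computes $|\Aut(L_{n'}/F)| \leq [L_{n'}:F]$.

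The main obstacle is to justify that the canonical extension in $(3) \Rightarrow (1)$ really stays in $\rca$ rather than requiring $\wkl$, as the bounded-tree machinery of Theorem~\ref{weakgalois} might suggest. What rescues the construction without König's lemma is the assumption $F \subseteq K$: it makes each $L_m$ and the effective Galois theory on it directly accessible, so each inductive step is a bounded search in a finite set rather than the search for an infinite path through an infinite tree.
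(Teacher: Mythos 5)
Your proposal is correct and follows essentially the same route as the paper: the implication structure $(1)\Rightarrow(2)\Rightarrow(3)\Rightarrow(1)$ is the same, and your $(2)\Rightarrow(3)$ argument (the $\Pi^0_1$ fixed-point characterization of $\psi(F)$ plus density and $\Delta^0_1$-comprehension) is exactly the paper's. For $(3)\Rightarrow(1)$ the paper likewise builds, uniformly in $n$, a finite list of $F$-automorphisms of $K$ realizing every possible behavior on the first $n$ elements by extending finite partial data step by step (citing Lemma 2.8 of Friedman--Simpson--Smith and the construction at the end of Theorem~\ref{diamond}); your packaging via the $\nor{2}$ tower of finite splitting fields $L_{n'}$, the finite groups $\Aut(L_{n'}/F)$, and surjectivity of the restriction maps is only a cosmetic variant of that same idea, resting on the same effective finite Galois theory over the set field $F$.
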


\begin{proof}
  It is clear that (1) implies (2).

  To see that (2) implies (3), suppose that $\seq{\varphi}{i}$ enumerates a dense set of elements of $\Aut(K/F)$.
  We claim that $$\alpha \in F \biimp (\forall i)(\varphi_i(\alpha) = \alpha).$$
  Since the displayed formula is $\Pi^0_1$, this shows that $F$ is a $\Delta^0_1$ subset of $K$.
  Since $\seq{\varphi}{i}$ consists of elements of $\Aut(K/F)$, the forward implication is clear.
  For the converse, suppose $\alpha$ is an element of $K$ that is not in $F$.
  Then, since $K$ is a Galois extension of $F$, there is an $F$-automorphism $\varphi$ of $K$ such that $\varphi(\alpha) \neq \alpha$.
  By density, there is an $i$ such that $\varphi_i(\alpha)  = \varphi(\alpha)$ and so $\varphi_i(\alpha) \neq \alpha$.

To see that (3) implies (1), assume that $F$ is a set.  Given the first $n$ elements of $K$, by Lemma 2.8 of Friedman, Simpson, and Smith \cite{fss}
we can find polynomials irreducible over $F$ corresponding to each element and the roots of these polynomials in $K$.
From these construct the finite list of all possible related initial segments of $F$-automorphisms of $K$.  Emulating the construction at the end of the
proof of Theorem \ref{diamond}, we can extend these to $F$-automorphisms of $K$.  For every $\psi \in \Aut(K/F)$ there
will be a $\varphi$ in this collection such that $d(\psi,\varphi ) \le 2^{-n}$.  This construction can be carried out uniformly, yielding the sequence
and function witnessing that $\Aut(K/F)$ is separably closed and totally bounded.
\end{proof}

\begin{thm}$(\rca)$\label{stronggalois}
  \emph{(Strong Galois Correspondence.)}
  Suppose $K$ is a Galois extension of $F$.
  \begin{itemize}
  \item For every set $E$ which is a field that contains $F$ and is contained in $K$, $K$ is a Galois extension of $E$, and $\Aut(K/E)$ is a separably closed and totally bounded
  subgroup of $\Aut(K/F)$.
  \item For every separably closed and totally bounded subgroup $H$ of $\Aut(K/F)$, the collection $E$ of elements fixed by $H$ is a set contained in $K$,
  $K$ is a Galois extension of $E$, and $H = \Aut(K/E)$.
  \end{itemize}
\end{thm}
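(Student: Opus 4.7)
The plan is to reduce both parts to previously established machinery, leaning on the subset-case clauses at the end of Theorem \ref{diamond} and on Theorem \ref{normal1} to work in $\rca$ rather than $\wkl$. Part 1 delivers the main structural statement, and Part 2 then follows from Part 1 combined with a finite Galois theory approximation argument modeled on the proof of Theorem \ref{weakgalois}.

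For Part 1, fix $\alpha\in K\setminus E$; its minimal polynomial $p$ over $E$ divides the minimal polynomial of $\alpha$ over $F$, which splits in $K$ since $K/F$ is Galois and hence $\nor{1}$ by Theorem \ref{normal1}. So $p$ splits in $K$ and, as $\deg p\geq 2$, has a root $\beta\neq\alpha$ in $K$. Theorem \ref{hungerford18} produces an $E$-isomorphism $\sigma:E(\alpha)\to E(\beta)$. Choose an algebraic closure $\clo K$ literally containing $K$, so that $E(\alpha),E(\beta)\subseteq K\subseteq\clo K$ and $\clo K$ serves as an algebraic closure of each of $E(\alpha)$ and $E(\beta)$; the subset-case clause for item (2) of Theorem \ref{diamond} then extends $\sigma$ in $\rca$ to an $E$-automorphism of $\clo K$. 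Since $F\subseteq E$, this is also an $F$-automorphism of $\clo K$, and $K/F$ satisfies $\nor{4}$ by Theorem \ref{normal1}, so it restricts to an $F$-automorphism of $K$ that still fixes $E$ and carries $\alpha$ to $\beta$, witnessing that $K$ is Galois over $E$. Lemma \ref{sepgalois} applied to $(E,K)$, using that $E$ is a subset of $K$, then gives that $\Aut(K/E)$ is separably closed and totally bounded, and $\Aut(K/E)\subseteq\Aut(K/F)$ is immediate from $F\subseteq E$.

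For Part 2, let $\seq{\varphi}{i}$ be the dense sequence in $H$ and set $E=\{k\in K:\forall i\,\varphi_i(k)=k\}$. The defining condition is $\Pi^0_1$, and by the (2)$\Rightarrow$(3) argument of Lemma \ref{sepgalois}, its complement in $K$ is $\Sigma^0_1$: if some $\psi\in H$ moves $\alpha\in K$, density supplies some $\varphi_i$ also moving $\alpha$. So $E$ is a $\Delta^0_1$ set, visibly a field with $F\subseteq E\subseteq K$, and Part 1 gives that $K$ is Galois over $E$. The inclusion $H\subseteq\Aut(K/E)$ follows from the definition of $E$ together with continuity of evaluation (every limit of a Cauchy subsequence in $H$ fixes $E$), so everything reduces to showing $\Aut(K/E)\subseteq H$.

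Fix $\psi\in\Aut(K/E)$ and $n\in\nat$. Let $L\subseteq K$ be the splitting field over $E$ of the product of the minimal $E$-polynomials of $k_0,\dots,k_{n-1}$. Since $K/E$ is $\nor{1}$, $L$ is a finite Galois extension of $E$ contained in $K$, and by normality $L$ is closed under $\psi$, $\psi^{-1}$, and every $\varphi_i$; in particular $\psi^{-1}(k_j)\in L$ for $j<n$. Restriction to $L$ carries $H$ into a subgroup $\clo H\subseteq\Aut(L/E)$ whose fixed subfield in $L$ is $L\cap E=E$, so finite Galois theory (Lemma 2.11 of \cite{fss}) gives $\clo H=\Aut(L/E)$. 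Thus some $\varphi\in H$ satisfies $\varphi|_L=\psi|_L$, and density of $\seq{\varphi}{i}$ in $H$ yields a $\varphi_i$ so close to $\varphi$ that $\varphi_i|_L=\varphi|_L=\psi|_L$, in particular agreeing with $\psi$ on $k_0,\dots,k_{n-1}$ and with $\psi^{-1}$ on the same initial segment via the preimages $\psi^{-1}(k_j)\in L$. This gives $d(\varphi_i,\psi)\leq 2^{-n}$, so $\psi$ is a limit of the dense sequence and therefore $\psi\in H$. The main obstacle is this last approximation step: one must control both the forward and inverse distances simultaneously in the $\Sym$-metric, which is handled by enlarging $L$ so that it already absorbs the relevant $\psi$-preimages, automatic from the normality of $L/E$.
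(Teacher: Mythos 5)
Your Part 1 and the approximation argument in Part 2 follow the paper's own route (Part 1 simply inlines the last sentence of Theorem \ref{galoisnormal} together with Lemma \ref{sepgalois}, and the $\Aut(K/E)\subseteq H$ step is the same finite Galois theory argument via FSS Lemma 2.11). The genuine gap is at the start of Part 2: the existence of $E$ as a set. You define $E=\{k\in K:\forall i\,\varphi_i(k)=k\}$ and assert it is $\Delta^0_1$ because ``the defining condition is $\Pi^0_1$ and its complement is $\Sigma^0_1$.'' That is a non sequitur: the complement of any $\Pi^0_1$-defined class is automatically $\Sigma^0_1$-defined, and this never licenses $\Delta^0_1$-comprehension. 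What is needed is a $\Sigma^0_1$ (or bounded) definition of $E$ itself, i.e.\ a $\Pi^0_1$ definition of its complement. The appeal to the (2)$\Rightarrow$(3) argument of Lemma \ref{sepgalois} does not transfer, because there the $\Sigma^0_1$ side came for free: the field in question was already given as a set $F$ with an embedding into $K$, so membership in its image is $\Sigma^0_1$. Here $E$ is being constructed, not given, and density alone gives only the $\Pi^0_1$ description; the paper even remarks after Theorem \ref{weakgalois} that the fixed field of a closed subgroup is hard to define below $\wkl$.

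This is exactly where the hypothesis of total boundedness must be used, and your proof never invokes the bounding function $b$ at this point. The paper defines $E$ by the bounded formula $(\forall i\le b(\cdot))(\varphi_i(k)=k)$ (with the bound evaluated at the index of $k$ in the enumeration of $K$), which is $\Delta^0_0$ and hence yields $E$ by $\Delta^0_1$-comprehension; the equivalence with ``$k$ is fixed by every element of $H$'' follows because any $\psi\in H$ moving $k$ is $2^{-(n+1)}$-close to some $\varphi_i$ with $i\le b(n+1)$, where $n$ is the index of $k$, and that $\varphi_i$ then also moves $k$. With this repair the rest of your Part 2 goes through essentially as in the paper (a minor formalization point: to apply finite Galois theory you should note that the finite group $\clo H$ of restrictions to $L$ exists, either via bounded $\Sigma^0_1$-comprehension or, as the paper does, by taking the restrictions of $\varphi_0,\dots,\varphi_{b(m)}$ for a suitable $m$).
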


\begin{proof}
  The first part of the theorem follows from the last sentence of Theorem~\ref{galoisnormal} and Lemma~\ref{sepgalois}.

  For the second part of the theorem, suppose that $\seq{\varphi}{i}$ and $b:\nat\to\nat$ witness that $H$ is  separably closed and totally bounded.
  Then, the subfield E of K fixed by H can be defined by the bounded formula $(\forall i \leq b(k))(\varphi_i(k) = k)$, which therefore exists by $\Delta^0_1$-comprehension.

  It remains to see that $H = \Aut(K/E)$.
  The inclusion $H \subset \Aut(K/E)$ is clear, so suppose that $\psi$ is an $E$-automorphism of $K$.
  Pick $n$ elements $\{k _0 , \dots , k_{n-1} \}$ of $K$.  
  Let $L $
  be the normal closure of $E(k_0,\dots,k_{n-1})$.
  (That is, $L$ is the splitting field for the minimal polynomials of $k_0 , \dots , k_{n-1}$.)
  For each $i < n$, let $p_i\in F[x]$ be a polynomial with root $k_i$ that splits into linear factors in $K$, and let $m$ be the largest root of these polynomials.
  Now $\psi$ restricts to an $E$-automorphism $\clo{\psi}$ of $L$.
  Every $\varphi_i$ also restricts to an $E$-automorphism $\clo{\varphi}_i$ of $L$ and the first $b(m)+1$ such restrictions actually form a group $\clo{H} = \{\clo{\varphi}_0,\dots,\clo{\varphi}_{b(m)}\}$ of automorphisms of $L$.
  Furthermore,
  $E$ is the subfield of $L$ fixed by $\clo H$ since $E$ is the subfield of $K$ fixed by $H$.
  It follows from finite Galois theory that $\clo{H} = \Aut(L/E)$ \cite{fss}*{Lemma~2.11}, which means that $\clo{\psi} = \clo{\varphi}_i$ for some $i \leq b(m)$.
  Since $k_0,k_1,\dots,k_{n-1} \in L$, it follows that $d(\varphi_i ,\psi) \leq 2^{-n}$.
  Since this holds for every $n \in \nat$ we see that $\varphi \in H$.
\end{proof}

Galois theory also says that if $K$ is a Galois extension of $F$ and $L$ is an intermediate field, then $L$ is a Galois extension of $F$ if and only if $\Aut(K/L)$ is a normal subgroup of $\Aut(K/F)$, in which case $\Aut(L/F)$ is isomorphic to the quotient group $\Aut(K/F)/\Aut(K/L)$.
To analyze this, we first prove a variant of Theorem~\ref{normalgalois} in $\rca$.

\begin{thm}$(\rca)$\label{intgalois}
  Let $K$ be a Galois extension of $F$ and let $L$ be an intermediate extension.
  The following are equivalent:
  \begin{enumerate}
  \item $L$ is a Galois extension of $F$.
  \item $L$ is a $\nor{1}$-normal extension of $F$.
  \item $L$ is a $\nor{2}$-normal extension of $F$.
  \item If $\varphi: L \to K$ is an $F$-embedding, then $\varphi$ is an $F$-automorphism of $L$.  (This is a variant of $\nor{3}$.)
  \item Every $F$-automorphism of $K$ restricts to an $F$-automorphism of $L$.  (This is a variant of $\nor{4}$ and uses the notion of restriction from Definition \ref{defnext}.)
  \end{enumerate}
\end{thm}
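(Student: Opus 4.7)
The plan is to prove the cycle $(1) \to (2) \leftrightarrow (3) \to (4) \to (5) \to (1)$ in $\rca$. The first four implications follow from the $\rca$-provable implications in Theorem~\ref{normal1}, adapted to the present variants of $\nor{3}$ and $\nor{4}$; the crucial new implication is $(5) \to (1)$, and this is where the hypothesis that $K$ itself (rather than merely $\clo L$) is Galois over $F$ does the work that $\wkl$ had to do in Theorem~\ref{normalgalois}.

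For the routine implications I would follow the blueprint of Theorem~\ref{normal1}. The implication $(1) \to (2)$ is immediate from $\gal \to \nor{1}$, and $(2) \leftrightarrow (3)$ is $\nor{1} \leftrightarrow \nor{2}$, both provable in $\rca$. For $(3) \to (4)$, adapt the $\nor{2} \to \nor{3}$ argument: writing $L$ as a splitting field over $F$ of a sequence $\langle p_n \rangle$ of polynomials whose images split in $L$, any $F$-embedding $\varphi \colon L \to K$ must send each root of each $p_n$ to another such root, hence into (the image of) $L$; applying finite pigeonhole root-by-root, $\varphi$ is surjective onto $L$ and so yields an $F$-automorphism of $L$. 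For $(4) \to (5)$, given an $F$-automorphism $\psi$ of $K$, composing with the embedding of $L$ into $K$ produces an $F$-embedding $L \to K$; by (4) this is an $F$-automorphism of $L$, which is precisely the sense in which $\psi$ restricts to $L$.

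The heart of the argument is $(5) \to (1)$. Fix $\alpha \in L$ that is not in (the image of) $F$. Since $K$ is Galois over $F$, there is an $F$-automorphism $\psi$ of $K$ with $\psi(\alpha) \ne \alpha$. By (5), $\psi$ restricts to an $F$-automorphism of $L$, and this restriction still moves $\alpha$; hence $L$ is Galois over $F$. The only obstacle I anticipate is notational: $F$, $L$, and $K$ are related by embeddings rather than literal inclusions, so one must phrase ``$\psi(\alpha) \ne \alpha$'' and ``$\psi$ restricts to an automorphism of $L$'' through the appropriate composition with the embedding $L \to K$ given by the intermediate extension data. No new comprehension is needed, and unlike the corresponding step in Theorem~\ref{normalgalois} we never have to conjure automorphisms of an algebraic closure out of Hungerford's theorem --- the Galois hypothesis on $K/F$ delivers them for free.
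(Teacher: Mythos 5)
Your proposal is correct and follows essentially the same route as the paper: the cycle $(1)\to(2)\to(3)\to(4)\to(5)\to(1)$, with the first implications drawn from Theorem~\ref{normal1}, $(3)\to(4)$ and $(4)\to(5)$ obtained by adapting the $\nor{2}\to\nor{3}$ and $\nor{3}\to\nor{4}$ arguments, and $(5)\to(1)$ falling out immediately from the hypothesis that $K$ is Galois over $F$. Your elaboration of the last step (and the care with embeddings rather than inclusions) matches what the paper leaves as ``immediate.''
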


\begin{proof}
 Theorem~\ref{galoisnormal} shows that (1) implies (2) and that (2) implies (3).
 The proof that (3) implies (4) is analogous to the proof that $\nor{2}$ implies $\nor{3}$ in Theorem \ref{galoisnormal}.
 The proof that (4) implies (5) is analogous to the proof that $\nor{3}$ implies $\nor{4}$ in Theorem \ref{galoisnormal}.
  Since $K$ is a Galois extension of $F$ it follows immediately that (5) implies (1).
\end{proof}

The next theorem uses the following terminology.  If $G$ is a class that is a group and $N$ is a subclass that is also a group, we say that
$N$ is a {\sl normal subgroup} of $G$ if for all $\varphi \in N$ and $\psi \in G$, $\psi\varphi\psi^{-1}$ is in $N$.

\begin{thm}$(\rca)$
  Let $K$ be a Galois extension of $F$ and let $L$ be an intermediate extension. 
  \begin{enumerate}
  \item If $L$ is a Galois extension of $F$ then $\Aut(K/L)$ is a normal subgroup of $\Aut(K/F)$.
  \item If $K$ is a Galois extension of $L$ and $\Aut(K/L)$ is a normal subgroup of $\Aut(K/F)$ then $L$ is a Galois extension of $F$.
  \item If $L$ is  also a subset of $K$, then $\Aut(K/L)$ is a normal subgroup of $\Aut(K/F)$ if and only if $L$ is a Galois extension of $F$.
  \end{enumerate}
\end{thm}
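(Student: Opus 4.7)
The plan is to dispatch the three items in turn, leveraging Theorems~\ref{intgalois} and~\ref{normal1} as the main tools; throughout, let $\varphi_{LK}\colon L \to K$ denote the embedding witnessing that $L$ is intermediate. For (1), I will invoke Theorem~\ref{intgalois}: under the Galois hypothesis on $L/F$, every $\psi \in \Aut(K/F)$ restricts through $\varphi_{LK}$ to some $\bar\psi \in \Aut(L/F)$ satisfying $\psi \circ \varphi_{LK} = \varphi_{LK} \circ \bar\psi$. Then for $\varphi \in \Aut(K/L)$ and $\psi \in \Aut(K/F)$ one checks directly that
\[
\psi\varphi\psi^{-1}(\varphi_{LK}(y)) = \psi\varphi(\varphi_{LK}(\clo{\psi^{-1}}(y))) = \psi(\varphi_{LK}(\clo{\psi^{-1}}(y))) = \varphi_{LK}(y)
\]
for every $y \in L$, so $\psi\varphi\psi^{-1}$ fixes the image of $L$ and normality follows.

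For (2), I will fix $\ell \in L$ not in the image of $F$. Since $K$ is Galois over $F$, some $\psi \in \Aut(K/F)$ satisfies $\psi(\varphi_{LK}(\ell)) \neq \varphi_{LK}(\ell)$. The key step is to show $\psi$ stabilises $\varphi_{LK}(L)$: for $x \in \varphi_{LK}(L)$ and $\varphi \in \Aut(K/L)$, normality yields $\psi^{-1}\varphi\psi \in \Aut(K/L)$, which therefore fixes $x$, whence $\varphi(\psi(x)) = \psi(x)$. Since $K$ is Galois over $L$, this forces $\psi(x) \in \varphi_{LK}(L)$; applying the same reasoning to $\psi^{-1}$ yields the converse inclusion. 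The induced map $\bar\psi\colon L \to L$, sending $y$ to the unique $z \in L$ with $\varphi_{LK}(z) = \psi(\varphi_{LK}(y))$, then has a $\Delta^0_0$ graph and exists in $\rca$; it is an $F$-automorphism of $L$ moving $\ell$, so $L$ is Galois over $F$.

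Item (3) falls out as a corollary. The forward direction is just (1). For the converse, assuming $L \subseteq K$ is fixed by its embedding and that $\Aut(K/L)$ is normal, I plan to reduce to (2) by first proving $K$ is Galois over $L$. Since $K$ is Galois over $F$, Theorem~\ref{galoisnormal} gives that $K$ is $\nor{1}$-normal over $F$; any irreducible $p \in L[x]$ with a root $\alpha \in K$ divides in $L[x]$ the minimal polynomial $q \in F[x] \subseteq L[x]$ of $\alpha$, and since $q$ splits completely in $K$, so does $p$. With $L \subseteq K \subseteq \clo K$ and the standing characteristic $0$ assumption, Theorem~\ref{normal1} upgrades $\nor{1}$ to $\gal$, so $K$ is Galois over $L$ and (2) applies.

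The main obstacle is to justify the formation of $\bar\psi$ in part (2) inside $\rca$: the Galois hypothesis on $K/L$ must supply the preimage $z$, and injectivity of $\varphi_{LK}$ then makes the defining condition a legitimate $\Delta^0_0$ graph. The remaining work is routine bookkeeping with embeddings.
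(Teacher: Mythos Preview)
Your arguments for (1) and (2) are correct and match the paper's approach. For (2), the paper phrases things slightly differently: it observes that $\Aut(K/\psi[L]) = \psi\,\Aut(K/L)\,\psi^{-1} = \Aut(K/L)$ for every $\psi \in \Aut(K/F)$, deduces $\psi[L] = L$ from $K$ being Galois over $L$, and then invokes part~(5) of Theorem~\ref{intgalois}. Your direct verification of $\gal$ at a single element $\ell$ is the same computation unpacked.

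For (3) you diverge from the paper, which simply cites Theorem~\ref{stronggalois} (first bullet) to obtain that $K$ is Galois over $L$ whenever $L$ is a subset of $K$. Your route via the last sentence of Theorem~\ref{normal1} is workable but has two loose ends. First, the inclusion $K \subseteq \clo K$ is not among the hypotheses of the theorem you are proving; you need to remark that in $\rca$ one can construct an algebraic closure containing $K$ as a literal subset (this is true, but must be said rather than assumed). Second, your appeal to ``the standing characteristic~$0$ assumption'' is misplaced: the paper's blanket restriction to characteristic~$0$ covers only \S\ref{sectionext}--\S\ref{sectioniso}, not \S\ref{sectiongalcor}. Separability of $K$ over $L$ does hold, but for a different reason --- since $K$ is Galois over $F$, the proof of $\gal \to \nor{1}$ in Theorem~\ref{normal1} shows that each element of $K$ has a minimal polynomial over $F$ with distinct roots, and the minimal polynomial over $L$ divides it. With these two patches your argument for (3) goes through; the paper's appeal to Theorem~\ref{stronggalois} packages the same underlying construction (ultimately the one in Lemma~\ref{sepgalois}) without the detour through $\clo K$.
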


\begin{proof}
  For the first statement,
  suppose $\varphi$ is an element of $\Aut (K/L)$ and $\psi$ is an element of $\Aut (K/F)$.  Then $\psi^{-1}$ is also in $\Aut(K/F )$.
  Consider $\psi \varphi \psi^{-1}$ and let $x \in L$.  Since $L$ is a Galois extension of $F$, by part (5) of Theorem \ref{intgalois}, $\psi^{-1} (x) \in L$.
  Thus $\varphi(\psi^{-1} (x)) = \psi^{-1} (x)$ and $\psi (\varphi \psi^{-1} (x)) = x$.  Thus $\psi \varphi \psi^{-1} \in \Aut (K/L)$ and
  so $\Aut (K/L)$ is a normal subgroup of $\Aut (K/F )$.

  For the second statement, a simple algebraic computation shows that if $\varphi$ is an $F$-automorphism of $K$, then $\Aut(K/\varphi[L]) = \varphi\Aut(K/L)\varphi^{-1}$.
  If $\Aut(K/L)$ is a normal subgroup of $\Aut(K/F)$ then $\varphi\Aut(K/L)\varphi^{-1} = \Aut(K/L)$.
  Assuming that $K$ is Galois over $L$, it follows that $L = \varphi[L]$ and hence that $\varphi$ restricts to an automorphism of $L$.
  By part~(5) of Theorem~\ref{intgalois}, it follows that $L$ is a Galois extension of $F$.

  The last statement follows from the previous two and Theorem~\ref{stronggalois} which shows that $K$ is necessarily a Galois extension of $L$.
\end{proof}

\noindent
Informally, if $L$ is an intermediate Galois extension of $F$, then
the restriction map from $K$ to $L$ takes each element of $\Aut(K/F)$ and restricts its domain to create an automorphism of $L$.
Consequently, the restriction map as described in part (5) of Theorem \ref{intgalois}
is a homomorphism from $\Aut(K/F)$ to $\Aut(L/F)$ whose kernel is $\Aut(K/L)$.
However, the homomorphism from $\Aut(K/F)$ to $\Aut(L/F)$ needs to be surjective in order to conclude that $\Aut(L/F)$ is isomorphic to the quotient of $\Aut(K/F)$ by $\Aut(K/L)$, which we can't really talk about in second-order arithmetic other than via the First Isomorphism Theorem.

\begin{thm}\label{galoisquot}$(\rca)$
  The following are equivalent:
  \begin{enumerate}
  \item $\wkl$
  \item If $K$ is a Galois extension of $F$ and $L$ is an intermediate extension of $F$, then the restriction map is a surjective homomorphism from $\Aut(K/F)$ onto $\Aut(L/F)$ whose kernel is $\Aut(K/L)$.
  \end{enumerate}
If $L$ is a subset of $K$ fixed by its embedding, then (2) is provable in $\rca$.
\end{thm}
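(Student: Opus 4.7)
The plan is to prove $(1) \leftrightarrow (2)$ in $\rca$ and then handle the final sentence.

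For $(1) \Rightarrow (2)$, I would assume $\wkl$ and note first that the restriction map $\rho \colon \Aut(K/F) \to \Aut(L/F)$ given by $\rho(\tau) = \tau|_L$ is well-defined because $L$ is Galois over $F$, via Theorem~\ref{intgalois}(5); the homomorphism property and the equality $\ker \rho = \Aut(K/L)$ are then immediate from the definitions. The substantive step is surjectivity: given $\sigma \in \Aut(L/F)$, I would fix an algebraic closure $\clo K$ of $K$ (which is simultaneously an algebraic closure of $L$), invoke Theorem~\ref{diamond}(2) under $\wkl$ to extend $\sigma$ to $\tilde\sigma \in \Aut(\clo K / F)$, and then restrict $\tilde\sigma$ back to $K$ using that $K$ is Galois, hence $\nor{}1$ and thus $\nor{}4$ over $F$ by Theorem~\ref{galoisnormal}. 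The resulting $\tau = \tilde\sigma|_K$ satisfies $\rho(\tau) = \sigma$.

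For the reversal $(2) \Rightarrow (1)$, I would mimic the separating-set constructions from Theorems~\ref{diamond} and~\ref{mn}. Given disjoint injections $f,g$ with nonzero ranges, set $E = \rat(\sqrt{p_{f(i)}}, \sqrt{2 p_{g(i)}} \mid i \in \nat)$, so $\sqrt 2 \notin E$ by Lemma~\ref{rothcoro}. I would then apply (2) with base field $\rat$, with $\clo\rat$ (Galois over $\rat$ in $\rca$, as tacitly used in Theorem~\ref{subgalois}) as the ambient Galois extension, and with the compositum of quadratic extensions $L = \rat(\sqrt 2, \sqrt{p_{f(i)}}, \sqrt{p_{g(i)}} \mid i \in \nat)$ --- also Galois over $\rat$ in $\rca$ --- as the intermediate field. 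The key construction is an explicit $\sigma \in \Aut(L/\rat)$ acting on generators by $\sqrt 2 \mapsto -\sqrt 2$, $\sqrt{p_{f(i)}} \mapsto \sqrt{p_{f(i)}}$, and $\sqrt{p_{g(i)}} \mapsto -\sqrt{p_{g(i)}}$; a direct calculation yields $\sigma(\sqrt{2 p_{g(i)}}) = (-\sqrt 2)(-\sqrt{p_{g(i)}}) = \sqrt{2 p_{g(i)}}$, so $\sigma$ fixes every generator of $E$. Surjectivity from (2) then supplies $\tau \in \Aut(\clo\rat/\rat)$ with $\tau|_L = \sigma$; since $E \subseteq L$, this $\tau$ is in fact a nontrivial $E$-automorphism of $\clo\rat = \clo E$ with $\tau(\sqrt 2) = -\sqrt 2$. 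As in the reversal of Theorem~\ref{diamond}, the set $S = \{i : \tau(\sqrt{p_i}) = \sqrt{p_i}\}$ exists by $\Delta^0_1$-comprehension and separates the ranges of $f$ and $g$.

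For the final sentence, I would re-run the forward argument but choose $\clo K$ so that $L \subseteq \clo K$; the appeal to $\wkl$ via Theorem~\ref{diamond}(2) is then replaced by the final sentence of that theorem, which makes the extension available in $\rca$ alone under the subset hypothesis. The restriction of $\tilde\sigma$ back to $K$ still goes through because $K$ is $\nor{}4$ over $F$ in $\rca$.

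The principal obstacle I anticipate lies in the reversal: I must verify in $\rca$ that both $\clo\rat$ and the auxiliary Kummer-style field $L$ are Galois (not merely $\nor{}1$) over $\rat$. Since the general implication $\nor{}1 \to \gal$ requires $\wkl$ by Theorem~\ref{normalgalois}, these particular Galois claims must be established by explicit inductive construction of $\rat$-automorphisms through the tower of finite subextensions, exploiting the transparent structure of compositums of quadratic extensions, much as is used implicitly in the proof of Theorem~\ref{subgalois}.
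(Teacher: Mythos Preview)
Your proposal is correct, and the paper's own proof is a single sentence pointing back to Theorem~\ref{diamond}, so any reasonable elaboration counts as filling in what the authors left implicit.  That said, your reversal is organized differently from the most natural reading of ``similar to Theorem~\ref{diamond}.''  The direct analogue of that reversal would take the complicated field $E = \rat(\sqrt{p_{f(i)}},\sqrt{2p_{g(i)}}\mid i\in\nat)$ as the \emph{base} field $F$ in item~(2), set $L = F(\sqrt 2)$ and $K = \clo\rat$, and extend the single quadratic automorphism $a+b\sqrt2 \mapsto a-b\sqrt2$; the separating set then drops out exactly as in Theorem~\ref{diamond}.  You instead keep $F = \rat$, build a larger intermediate $L$, and construct a global $\sigma\in\Aut(L/\rat)$ by hand so that it happens to fix $E$.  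Your route trades the simple two-dimensional $\sigma$ for a more elaborate one, and in exchange you avoid having to verify in $\rca$ that $\clo\rat$ is Galois over the complicated $E$ --- a point you correctly flag as the principal obstacle, and which the direct approach must address via the subset clauses of Theorems~\ref{diamond} and~\ref{galoisnormal}.  Both approaches work; yours is a legitimate and arguably cleaner workaround.

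One small remark on the forward direction and the final sentence: routing through $\clo K$ and then restricting via $\nor4$ is fine, but the paper's pointer to Theorem~\ref{diamond} more naturally suggests extending $\sigma$ \emph{directly} from $L$ to $K$ by the same splitting-field tower construction used at the end of that proof, using that $K$ (being $\gal$, hence $\nor1$) is already a splitting field over $L$.  This avoids any need to arrange $K\subset\clo K$ as a literal subset when proving the last sentence.
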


\begin{proof}{}
Note that (2) simply states that any $F$-automorphism of $L$ can be extended to an $F$-automorphism of $K$. The proof is similar to that of Theorem \ref{diamond}.
\end{proof}

\section{Other characteristics}\label{sectionotherchar}

Results in sections \S\ref{sectionauto} and \S\ref{sectioniso} can be extended to fields of finite characteristic.  In many cases, separability conditions
must be appended to the hypotheses.  Additionally, when the characteristic is specified in the result, any
reversal must reflect this.  The final result of this section, based on Theorem \ref{mn}, illustrates the adaptation process.

Many of the reversals in previous sections involve extensions of $\rat$.  Adaptation of these arguments relies on the following observation.
Let $p$ be a prime and let $\gfpn$ denote the field of integers mod $p^n$.  The field of rational functions $\gfpn (x)$ is an infinite field of characteristic
$p$ and is the quotient field of the Euclidean ring $\gfpn [x]$.  Because $\gfpn$ is finite, $\rca$ can prove the existence of the set of
monic irreducible polynomials of $\gfpn[x]$.  These irreducible polynomials can play the role the prime numbers in our prior constructions.
For example, we have the following versions of Lemma \ref{roththm}.

\begin{lemma}\label{rothalt}
$(\rca)$   Let $R$ be a Euclidean ring with quotient ring $Q$ of characteristic not equal to $2$.
Let $p_1 , \dots , p_n$ and $q_1 , \dots , q_r$ be disjoint lists of distinct primes (irreducible elements).  Then
\[
\sqrt {q_1 \dots q_r} \notin Q ( \sqrt{p_1}, \dots , \sqrt{p_n } )\text{~ and~}
\sqrt {q_1} \notin Q (\sqrt{p_1} , \dots , \sqrt {p_n}, \sqrt {q_1 q_2} ,\dots, \sqrt{q_1 q_r} ).
\]
\end{lemma}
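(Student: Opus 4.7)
The plan is to imitate Roth's argument \cite{roth}, with $\Nb$ replaced by the Euclidean ring $R$ throughout. The crucial property is that $R$ is a UFD (being a PID), so each $p_i$ and $q_j$ is prime in the usual sense. The first assertion will be proved by $\Pi^0_1$-induction on $n$, which is available in $\rca$ by Corollary~II.3.10 of Simpson~\cite{simpson}. The second assertion will then follow from the first by the same algebraic manipulation used in the proof of Lemma~\ref{rothcoro}.

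For the base case $n=0$, suppose for contradiction that $\sqrt{q_1\cdots q_r}=u/v\in Q$ with $u,v\in R$ coprime. Then $u^2=q_1\cdots q_r\,v^2$; primality of $q_1$ in $R$ forces $q_1\mid u$, hence $q_1^2\mid u^2$, hence $q_1\mid q_2\cdots q_r\,v^2$, and since the primes are distinct we obtain $q_1\mid v$, contradicting coprimality. For the inductive step, apply the inductive hypothesis to $\{p_1,\dots,p_{n-1}\}$ and the singleton $\{p_n\}$ to conclude $\sqrt{p_n}\notin F_{n-1}:=Q(\sqrt{p_1},\dots,\sqrt{p_{n-1}})$, so $\{1,\sqrt{p_n}\}$ is an $F_{n-1}$-basis of $F_n:=F_{n-1}(\sqrt{p_n})$. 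Writing $\sqrt{q_1\cdots q_r}=a+b\sqrt{p_n}$ with $a,b\in F_{n-1}$ and squaring yields $2ab=0$ together with $a^2+b^2 p_n=q_1\cdots q_r$. Because the characteristic of $Q$ is not $2$, either $a=0$ or $b=0$: the case $b=0$ directly contradicts the inductive hypothesis, and the case $a=0$ rewrites as $\sqrt{p_n q_1\cdots q_r}=b p_n\in F_{n-1}$, contradicting the same hypothesis applied to the disjoint lists $\{p_1,\dots,p_{n-1}\}$ and $\{p_n,q_1,\dots,q_r\}$.

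For the second assertion, the manipulation from Lemma~\ref{rothcoro} carries over unchanged: write a putative $\sqrt{q_1}$ as a $Q$-linear combination of products of the generators, and separate the summands by the parity of the number of factors $\sqrt{q_1 q_i}$ appearing, using the identity $\sqrt{q_1 q_i}\sqrt{q_1 q_j}=q_1\sqrt{q_i q_j}$. This gives $\sqrt{q_1}=\alpha+\beta\sqrt{q_1}$ with $\alpha,\beta\in Q(\sqrt{p_1},\dots,\sqrt{p_n},\sqrt{q_2},\dots,\sqrt{q_r})$. If $\beta\neq 1$ then $\sqrt{q_1}=\alpha/(1-\beta)$ lies in this smaller field, contradicting the first assertion; if $\beta=1$, then isolating one of the $\sqrt{q_i}$ ($2\le i\le r$) appearing in the representation of $\beta$ allows us to solve for that $\sqrt{q_i}$ in terms of the remaining generators, again contradicting the first assertion.

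The main obstacle is the reverse-mathematical bookkeeping: we need to realise the tower $F_0\subset F_1\subset\cdots$ as $\Sigma^0_1$-$Q$-subfields of a common algebraic closure via Lemma~\ref{sigma}; verify that the statement ``$\alpha^2\neq q_1\cdots q_r$ for every $\alpha\in F_n$'' is uniformly $\Pi^0_1$ in $n$ so that the induction is legitimate in $\rca$; and execute the divisibility computations in $R$ uniformly via the Euclidean algorithm, so that the primality arguments used in the base case apply in the formal setting. Once this scaffolding is in place, the proof is the expected generalization of Roth's original argument.
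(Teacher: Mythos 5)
Your proposal is correct and follows essentially the same route as the paper: a $\Pi^0_1$-induction on $n$ for the first conjunct (with the same base-case divisibility argument in $R$ and the same squaring/case analysis using characteristic $\neq 2$ in the inductive step), followed by the Lemma~\ref{rothcoro} manipulation for the second conjunct. The only differences are cosmetic (coprimality versus exact-power parity in the base case, and phrasing the step via the basis $\{1,\sqrt{p_n}\}$ rather than three explicit cases).
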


\begin{proof}
We will work in $\rca$.
Fix $R$.  Note that the first conjunct of the conclusion can be written as: for every $n$, for every list of $p$s, for every list of $q$s, for every
quotient of $Q$-linear combinations of products of roots of $p$s, the square of the linear combination is not equal to the product of the $q$s.
Since this conjunct can be expressed as a $\Pi^0_1$ formula, we can proceed to prove it in $\rca$ by induction on $n$.

For the base case, suppose by way of contradiction that $\sqrt{q_1 \dots q_r} \in Q$.  Let $\sqrt{q_1 \dots q_r} = \frac{r_0}{r_1}$ where $r_0, r_1 \in Q$
and gcd$(r_0 , r_1 )= 1$.  Thus $r_1^2 q_1 \dots q_r = r_0^2$.  Since $q_1$ is prime and $q_1 | r_0^2$, we have $q_1 | r_0$.
So $r_0^2 = q_1^{2m} r_2$ where $m\ge 1$ and gcd$(q_1 , r_2 ) = 1$.  Since
$q_1^2 | r_1 ^2 q_1 \dots q_r$ and $q_1 , \dots , q_r$ are distinct primes, $q_1 | r_1^2$.  Thus $q_1 | r_1$ and so $r_1^2 q_1 \dots q_r = q_1^{2k+1} r_3$ where
$k\ge 1$ and gcd$(q_1 , r_3 ) = 1$.  Summarizing, $q_1^{2k+1} r_3 = q_1^{2m} r_2$ where $q_1 \not |~ r_3$ and $q_1 \not |~ r_2$, a contradiction.

For the induction step, suppose the lemma is true for $n-1$.  Fix distinct primes $p_1 , \dots , p_n$.  Let $F_0 = Q ( \sqrt{p_1} , \dots , \sqrt{p_{n-1}} )$.
Let $q_1 , \dots , q_r$ be a list of distinct primes disjoint from $p_1 , \dots , p_n$.  Suppose by way of contradiction that
$\sqrt{q_1 \dots q_r } \in F_0 ( \sqrt {p_n} )$.  Then we may write $\sqrt {q_1 \dots q_r} = \alpha + \beta \sqrt{p_n}$ where $\alpha , \beta \in F_0$.  Squaring yields
$q_1 \dots q_r = \alpha^2 + \beta^2 p_n + 2 \alpha \beta \sqrt{p_n}$.
Consider three cases:  (1)  If $\alpha \beta \neq 0$ then $\sqrt{p_n } \in F_0$, contradicting the induction hypothesis.  (2)  If $\beta = 0$ then
$\sqrt{q_1 \dots q_r} = \alpha \in F_0$, contradicting the induction hypothesis.  (3)  If $\alpha = 0$ then
$\sqrt{q_1 \dots q_r }= \beta \sqrt {p_n }$ so $\sqrt{q_1 \dots q_r p_n } = p_n \beta \in F_0$, contradicting the induction hypothesis.

This completes the induction proof of the first conjunct of the conclusion of the lemma.  The remaining conjunct is proved by the same
argument as Lemma \ref{rothcoro}.
\end{proof}

\begin{lemma}\label{carro}
$(\rca )$  Let $\{ p_i \mid i \le n \}$ be a sequence of distinct irreducible elements of ${\sf{GF}} (4) [x]$.
For each $i \le n$, let $r_i$ be a solution of $x^3 - p_i = 0$.  Then the set
$A = \{ \prod_{i \le n } r_i^{\varepsilon_i} \mid \forall i ~0 \le \varepsilon_i \le 2 \}$ is linearly independent over
${\sf{GF}} (4) (x)$.
Consequently, if $Q = {\sf{GF}} (4) (x)$ and
$p_1 , \dots , p_n$ and $q_1 , \dots , q_r$ are disjoint lists of distinct primes, then
\[
\root 3 \of{q_1 \dots q_r} \notin Q ( \root 3 \of{p_1}, \dots , \root 3 \of{p_n } )\text{~ and~}
\root 3 \of {q_1} \notin Q (\root 3 \of {p_1} , \dots , \root 3 \of {p_n}, \root 3 \of {q_1 q_2} ,\dots ,\root 3 \of{q_1 q_r} ).
\]
\end{lemma}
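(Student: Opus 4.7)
The plan is to prove the linear independence of $A$ by $\Pi^0_1$-induction on $n$, which is available in $\rca$; the statement ``no nontrivial $Q$-linear relation holds among the elements of $A$'' can be written as a $\Pi^0_1$ formula, just as in Lemma \ref{rothalt}. The base case $n=0$ is trivial. For the induction step, assume $A_{n-1}$ is a basis for $F_{n-1} := Q(r_1,\dots,r_{n-1})$ over $Q$, so that $[F_{n-1}:Q] = 3^{n-1}$. It then suffices to show $[F_{n-1}(r_n):F_{n-1}] = 3$, and since ${\sf{GF}}(4) \subset Q$ contains a primitive cube root of unity $\omega$, this reduces to showing that $p_n$ is not a cube in $F_{n-1}$.

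The key tool is an explicit Galois action built directly from the basis. For each tuple $a = (a_1,\dots,a_{n-1}) \in (\mathbb{Z}/3\mathbb{Z})^{n-1}$, let $\sigma_a$ be the $Q$-linear endomorphism of $F_{n-1}$ determined on $A_{n-1}$ by $\sigma_a(\prod r_i^{\varepsilon_i}) = \omega^{\sum a_i \varepsilon_i}\prod r_i^{\varepsilon_i}$. Because $\omega^3 = 1$, each $\sigma_a$ is a $Q$-algebra automorphism of $F_{n-1}$, and the family has common fixed field exactly $Q$. Now suppose $\gamma \in F_{n-1}$ satisfies $\gamma^3 = p_n$. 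Applying $\sigma_a$ gives $\sigma_a(\gamma)^3 = \gamma^3$, so $\sigma_a(\gamma) = \omega^{k(a)}\gamma$ for a group homomorphism $k\colon(\mathbb{Z}/3\mathbb{Z})^{n-1}\to\mathbb{Z}/3\mathbb{Z}$, which therefore has the form $k(a) = \sum c_i a_i$. Setting $\delta = \prod r_i^{c_i}$ yields $\sigma_a(\gamma/\delta)=\gamma/\delta$ for every $a$, so $\gamma = q\delta$ with $q \in Q$. Cubing produces the identity $p_n = q^3 \prod_{i<n} p_i^{c_i}$ in $Q={\sf{GF}}(4)(x)$; writing $q = u/v$ in lowest terms and comparing the exponent of $p_n$ on both sides, using unique factorization in the UFD ${\sf{GF}}(4)[x]$, yields $1 \equiv 0 \pmod 3$, the desired contradiction.

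The first consequence ($\root 3 \of{q_1 \cdots q_r} \notin Q(\root 3 \of{p_1},\dots,\root 3 \of{p_n})$) follows by running the same Galois argument inside $Q(r_1,\dots,r_n)$, with the final valuation taken at $q_1$ instead of $p_n$. The second consequence follows by a degree comparison: applying the linear independence statement to the combined prime list $p_1,\dots,p_n,q_1,\dots,q_r$ gives $[F'':Q] = 3^{n+r}$, where $F'' := Q(\root 3 \of{p_1},\dots,\root 3 \of{p_n},\root 3 \of{q_1},\dots,\root 3 \of{q_r})$, whereas $F' := Q(\root 3 \of{p_1},\dots,\root 3 \of{p_n},\root 3 \of{q_1 q_2},\dots,\root 3 \of{q_1 q_r})$ is generated over $Q$ by only $n+r-1$ cube roots and so has degree at most $3^{n+r-1}$. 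If $\root 3 \of{q_1}$ lay in $F'$, then each $\root 3 \of{q_j}$ (for $j \ge 2$) would too, because $\root 3 \of{q_1 q_j}$ and $\root 3 \of{q_1}\cdot\root 3 \of{q_j}$ differ by a cube root of unity in $Q$; this would force $F'\supseteq F''$, contradicting the degree bound.

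The main obstacle is the $\rca$-level verification that the maps $\sigma_a$ are well-defined ring automorphisms with common fixed field exactly $Q$. Both checks reduce to finite calculations on the basis $A_{n-1}$ supplied by the induction hypothesis: multiplicativity of $\sigma_a$ follows from the reduction rule $r_i^3 = p_i$ together with $\omega^{3k}=1$, and the fixed-field identity from the observation that $\omega^{\sum a_i \varepsilon_i}=1$ for every $a\in(\mathbb{Z}/3\mathbb{Z})^{n-1}$ only when all $\varepsilon_i = 0$.
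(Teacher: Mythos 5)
Your proof is correct, but it takes a genuinely different route from the paper. The paper disposes of the linear independence claim by checking that $A$ is \emph{pairwise} linearly independent over ${\sf GF}(4)(x)$ and then citing Theorem 1.3 of Carr and O'Sullivan on linear independence of roots (asserting that this particular instance formalizes in $\rca$), and it obtains both non-membership statements ``in much the same fashion as Lemma \ref{rothalt}'', i.e.\ by the separation-of-summands manipulation used for Lemma \ref{rothcoro}. You instead give a self-contained Kummer-theoretic induction: since ${\sf GF}(4)$ already contains the primitive cube roots of unity, the character maps $\sigma_a$ are available, and the induction step reduces to ``$p_n$ is not a cube in $F_{n-1}$'', settled by the valuation computation $p_n = q^3\prod p_i^{c_i}$ in the UFD ${\sf GF}(4)[x]$; you then get the first consequence by the same valuation trick at $q_1$ and the second by a degree count ($[F'':Q]\ge 3^{n+r}$ versus $[F':Q]\le 3^{n+r-1}$, using that $\root 3 \of{q_1 q_j}/\root 3 \of{q_1}$ is $\root 3 \of{q_j}$ up to a cube root of unity in $Q$). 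What your approach buys is transparency of the $\rca$ formalization: everything is finite linear algebra over $Q$ together with unique factorization, packaged as a $\Pi^0_1$ induction exactly as in Lemma \ref{rothalt}, with no appeal to an external theorem; the price is that it leans on the special feature that the base field contains the needed roots of unity, whereas the Carr--O'Sullivan route is shorter on the page and works in settings without roots of unity in the base field, at the cost of leaving that theorem's $\rca$ provability as an assertion. Two small points you should make explicit: the induction hypothesis proper is linear independence, and the basis/spanning property of $A_{n-1}$ (hence the well-definedness of the $\sigma_a$) follows by the routine argument that the $Q$-span of $A_{n-1}$ is a finite-dimensional domain containing the generators, hence equals $F_{n-1}$; and in the degree comparison only the inequality $[F'':Q]\ge 3^{n+r}$ coming from the independence statement is needed.
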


\begin{proof}
A straightforward algebraic argument proves that $A$ is pairwise linearly independent over  ${\sf{GF}} (4) (x)$.
The first sentence of the lemma follows from Theorem 1.3 of Carr and O'Sullivan \cite{co}, substituting ${\sf{GF}} (4) (x)$ for their $K$,
$\clo K$ for $L$, and $A$ (as in the statement) for $A$.  This instance of their theorem can be proved in $\rca$.
The remainder of the lemma can be proved in much the same fashion as Lemma \ref{rothalt}.
\end{proof}

\begin{thm}\label{charp}
$(\rca )$  Let $p$ be a prime or $0$.  The following are equivalent:
\begin{enumerate}
\item  $\wkl$.
\item  Let $F$ be an infinite field of characteristic $p$ and let $K$ be an algebraic extension of $F$ that includes a separable element $\alpha \notin F$.
Then there is an $F$-embedding of $K$ into $\clo K$ that is not the identity.
\end{enumerate}
\end{thm}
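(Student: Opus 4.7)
The forward direction is a direct adaptation of the corresponding step in Theorem \ref{mn}. Assuming $\wkl$, take the separable element $\alpha \in K \setminus F$ and let $p(x) \in F[x]$ be its minimal polynomial. Because $\alpha$ is separable, $p(x)$ has distinct roots in $\clo K$, and since $\alpha \notin F$, the degree of $p$ is at least $2$, so $p$ has a root $\beta$ in $\clo K$ distinct from $\alpha$. Theorem \ref{hungerford18} supplies an $F$-isomorphism $\varphi : F(\alpha) \to F(\beta)$ sending $\alpha$ to $\beta$, and Theorem \ref{diamond}(\ref{diamond2}) then extends $\varphi$ to an $F$-automorphism of $\clo K$. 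Restricting this automorphism to $K$ yields an $F$-embedding $K \to \clo K$ whose value at $\alpha$ is $\beta \neq \alpha$, so it is not the identity.

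For the reversal, one separates three cases according to the characteristic and, in each, constructs a specific instance of (2) from which a separating set can be read off. Given injections $f$ and $g$ with disjoint nonzero ranges, the plan is always the same: build an infinite field $F$ of characteristic $p$ and an extension $K$ which encodes $f$ and $g$, apply (2) to obtain a nonidentity $F$-embedding $\psi : K \to \clo K$, and then use the action of $\psi$ on carefully chosen radicals to extract a separating set via $\Delta^0_1$ comprehension, mirroring the endgame of the proof of Theorem \ref{mn}.

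In characteristic $0$, take $F \subset \clo \rat$ exactly as in Theorem \ref{mn}, with $\alpha = \sqrt{2}$; the separability hypothesis is automatic and the argument there goes through verbatim. In characteristic $p > 2$, replace $\rat$ by the rational function field $\gfp(x)$; the set of monic irreducibles of $\gfp[x]$ exists in $\rca$ and plays the role of the primes of $\Nb$. Enumerate these irreducibles as $\pi_0, \pi_1, \dots$, form $F = \gfp(x)\bigl(\sqrt{\pi_{f(i)}},\, \sqrt{\pi_0 \pi_{g(i)}} \mid i \in \nat\bigr)$ via Lemma \ref{sigma}, and take $\alpha = \sqrt{\pi_0}$. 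The relevant linear independence of radicals, needed to conclude that $\alpha \notin F$ and to analyze $\psi$ on the $\sqrt{\pi_i}$, is exactly Lemma \ref{rothalt}; because $\mathrm{char} \neq 2$, each $x^2 - \pi_i$ is separable, so (2) applies.

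The characteristic $2$ case is the main obstacle, because square roots no longer separate roots. The fix is to work over $F_0 = \gfp(4)(x)$, which already contains the primitive cube roots of unity, and to use cube radicals of irreducibles in $\gfp(4)[x]$ in place of square roots. Enumerate these monic irreducibles as $\pi_0, \pi_1, \dots$ and set $F = F_0\bigl(\root 3 \of {\pi_{f(i)}},\, \root 3 \of {\pi_0 \pi_{g(i)}} \mid i \in \nat\bigr)$ and $\alpha = \root 3 \of {\pi_0}$. Each polynomial $x^3 - \pi_i$ is separable (its derivative $3x^2 = x^2$ is nonzero in characteristic $2$), so the separability hypothesis of (2) is satisfied. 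Lemma \ref{carro} provides the linear independence of cube radicals needed both to show $\alpha \notin F$ and to control the action of the resulting embedding $\psi$ on the $\root 3 \of {\pi_i}$. Once $\psi$ is produced by (2), the condition $\psi(\alpha) \neq \alpha$ forces $\psi(\alpha) = \zeta \alpha$ for a nontrivial cube root of unity $\zeta \in \gfp(4) \subset F$, which propagates to each $\root 3 \of {\pi_0 \pi_{g(i)}}$ in the same way the sign flip propagates in Theorem \ref{mn}, so the set $\{ k \mid \psi(\root 3 \of {\pi_k}) = \root 3 \of {\pi_k}\}$ exists by $\Delta^0_1$ comprehension and separates the ranges of $f$ and $g$.
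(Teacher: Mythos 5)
Your forward direction is exactly the paper's argument (separability gives a second root $\beta$, Theorem \ref{hungerford18} gives the $F$-isomorphism $F(\alpha)\to F(\beta)$, and Theorem \ref{diamond} extends it over $\clo K$), and your characteristic-$0$ reversal, read as reusing the field $F$ of Theorem \ref{mn} with $K=\clo\rat$ and $\alpha=\sqrt{2}$, is sound, though heavier than necessary: the paper instead exploits the freedom in (2) to choose $K$ not algebraically closed, taking $F=\rat(\sqrt{p_{(i,f(j))}},\sqrt{p_i\,p_{(i,g(j))}}\mid i,j\in\nat)$ and $K=\rat(\sqrt{p_i}\mid i\in\nat)$.

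The positive-characteristic reversals, however, have a genuine gap. You fix the pivot at $\pi_0$, setting $F=\gfp(x)(\sqrt{\pi_{f(i)}},\sqrt{\pi_0\pi_{g(i)}}\mid i\in\nat)$ with $\alpha=\sqrt{\pi_0}$ (and the analogous cube-root version over ${\sf GF}(4)(x)$), and then you argue as though the embedding $\psi$ supplied by (2) satisfies $\psi(\alpha)\neq\alpha$ — you say this explicitly in the characteristic-$2$ case. But (2) only asserts that \emph{some} $F$-embedding of $K$ into $\clo K$ is not the identity; it gives no control over which element moves. The fixed-pivot trick is borrowed from the reversal of Theorem \ref{diamond}, where it is legitimate because there one extends a \emph{specified} automorphism of $F(\sqrt 2)$ already known to send $\sqrt2\mapsto-\sqrt2$. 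Here, if $K$ contains all the radicals $\sqrt{\pi_k}$ (which your extraction of $S=\{k\mid\psi(\sqrt{\pi_k})=\sqrt{\pi_k}\}$ needs in order to be a total $\Delta^0_1$ definition), a nonidentity $F$-embedding may fix $\sqrt{\pi_0}$ — for instance, negate a single $\sqrt{\pi_m}$ with $m$ outside $\operatorname{ran}f\cup\operatorname{ran}g\cup\{0\}$, which $F$ does not constrain — and then $S$ contains both ranges and fails to separate. If instead you take $K=F(\alpha)$, so that any nonidentity $F$-embedding must move $\alpha$, then $\sqrt{\pi_k}\notin K$ for $k$ outside the two ranges, membership of $\sqrt{\pi_k}$ in (the image of) $K$ is only $\Sigma^0_1$, and your set is no longer $\Delta^0_1$-definable; you offer no alternative extraction. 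The paper's fix is to attach the coding to \emph{every} index via the pairing function, exactly as in Theorem \ref{mn}: take $K=\gfp(x)(r_i\mid i\in\nat)$ with $r_i^2=\pi_i$ and $F=\gfp(x)(r_{(i,f(j))},\,r_i r_{(i,g(j))}\mid i,j\in\nat)$, so that whichever $r_{i^*}$ the nonidentity embedding moves, $S=\{k\mid\psi(r_{(i^*,k)})=r_{(i^*,k)}\}$ is $\Delta^0_1$ and separating, with Lemma \ref{rothalt} supplying the needed independence; the same indexed construction with cube roots over ${\sf GF}(4)(x)$ and Lemma \ref{carro} repairs your characteristic-$2$ case.
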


\begin{proof}
To prove that (1) implies (2), assume $\wkl$.  Since $\alpha$ is separable, it is a root of a polynomial $p(x) \in F[x]$ with no repeated roots.
Since $\alpha \notin F$, the degree of $p(x)$ is greater than $1$.  Let $\beta \neq \alpha$ be another root of this polynomial.  Imitate the
proof of Theorem \ref{mn}.  Since the proof of Theorem \ref{diamond}
does not rely on the characteristic of $F$, it can be used to complete the proof.

Next, we will prove the reversal for characteristic $0$, and then adapt the argument for other characteristics.
Let $f$ and $g$ be injections such that $\forall s \forall t (0 \neq f(s) \neq g(t)\neq 0)$.  As in the proof of Theorem \ref{mn}, let $(i,j)$ denote both the ordered
pair and the integer code for that ordered pair.  Let $p_i$ denote the $i^{\text{th}}$ prime.  Define the fields $F$ and $K$ by:
\[
F = \rat ( \sqrt{p_{(i,f(j))}}, \sqrt{p_i p_{(i,g(j))}} \mid i,j \in \nat ) \qquad 
K = \rat ( \sqrt {p_i} \mid i \in \nat )
\]
By Lemma \ref{rothcoro}, $\sqrt{2}$ is not an element of $F$, so $K$ is a nontrivial extension of $F$.
Suppose $\varphi$ is a nontrivial $F$-embedding of $K$ into $\clo K$.  Then for some prime $p_i$, $\varphi ( \sqrt{p_i } ) \neq \sqrt{p_i}$.
For this $i$ and any $j$, $\varphi (\sqrt{p_{(i,g(j))}}) \neq \sqrt{p_{(i,g(j))}}$ and $\varphi ( \sqrt{p_{(i,f(j))}}) = \sqrt{p_{(i,f(j))}}$.
The separating set $S = \{ k \mid \varphi(\sqrt{p_{(i,k)}} ) = \sqrt{p_{(i,k)}} \}$ exists by $\Delta^1_0$ comprehension using the
parameter $\varphi$.  Since $S$ includes the range of $f$ and avoids the range of $g$, this proves $\wkl$.

Now suppose $p$ is an odd prime and (2) holds for fields of characteristic $p$.  Our goal is to adapt the previous construction to the
characteristic $p$ setting.  Let $\{ p_i \mid i \in \nat \}$ be a list of distinct irreducible monic polynomials in $\gfp [x]$. 
These will play the role that the prime numbers played in the preceding argument.
For each $p_i$, the polynomial $z^2 - p_i$ and its derivative have no common roots, so $z^2 - p_i$ is separable.
Let $r_i$ denote a root of $z^2 - p_i$.  Given
disjoint injections $f$ and $g$ that never take the value $0$, define the fields $F$ and $K$ by
\[
F=\gfp(x)(r_{(i,f(j))} , r_i r_{(i,g(j))} \mid i,j \in \Nb )\qquad
K=\gfp (x) (r_i \mid i \in \Nb )
\]
By Lemma \ref{rothalt}, $K$ is a nontrivial extension of $F$.
To complete the proof, use a nontrivial $F$-embedding of $K$ to find a separating set for the ranges of $f$ and $g$.

To carry out the reversal for characteristic $2$, modify the previous argument by using ${{\sf GF }}(4)(x)$, $z^3 - p_i$, and Lemma \ref{carro}.
\end{proof}

Some of the reversals in previous sections use algorithms for factoring polynomials over $\rat$.
One can find
factoring algorithms for the characteristic $p$ fields used in this section by adapting work of Stoltenberg-Hansen and Tucker \cite{stoltuck}.

\section*{Acknowledgements}

The authors would like to thank Bill Cook for useful discussions, and the referees for their helpful comments and suggestions.
Portions of Jeffry Hirst's work were supported by a grant (ID\#20800) from the John Templeton Foundation. 
The opinions expressed in this publication are those of the authors and do not necessarily reflect the views of the John Templeton Foundation.
Paul Shafer's work was funded in part by an FWO Pegasus Long Postdoctoral Fellowship.

\begin{bibsection}[\bibname]
\label{bib}
\begin{biblist}

\bib{artin}{book}{
   author={Artin, Emil},
   title={Galois theory},
   edition={2},
   note={Edited and with a supplemental chapter by Arthur N. Milgram},
   publisher={Dover Publications Inc.},
   place={Mineola, NY},
   date={1998},
   pages={iv+82},
   isbn={0-486-62342-4},
   review={\MR{1616156 (98k:12001)}},
}

\bib{brown}{article}{
   author={Brown, Douglas K.},
   title={Notions of closed subsets of a complete separable metric space in weak subsystems of second-order arithmetic},
   conference={
      title={Logic and computation},
      address={Pittsburgh, PA},
      date={1987},
   },
   book={
      series={Contemp. Math.},
      volume={106},
      publisher={Amer. Math. Soc.},
      place={Providence, RI},
   },
   date={1990},
   pages={39--50},
   note={DOI 10.1090/conm/106/1057814},
   review={\MR{1057814 (91i:03108)}},
}

\bib{co}{article}{
   author={Carr, Richard},
   author={O'Sullivan, Cormac},
   title={On the linear independence of roots},
   journal={Int. J. Number Theory},
   volume={5},
   date={2009},
   number={1},
   pages={161--171},
   issn={1793-0421},
   review={\MR{2499028 (2010c:11041)}},
   doi={10.1142/S1793042109002018},
}

\bib{fss}{article}{
   author={Friedman, Harvey M.},
   author={Simpson, Stephen G.},
   author={Smith, Rick L.},
   title={Countable algebra and set existence axioms},
   journal={Ann. Pure Appl. Logic},
   volume={25},
   date={1983},
   number={2},
   pages={141--181},
   issn={0168-0072},
   review={\MR{725732 (85i:03157)}},
   doi={10.1016/0168-0072(83)90012-X},
}

\bib{fss2}{article}{
   author={Friedman, Harvey M.},
   author={Simpson, Stephen G.},
   author={Smith, Rick L.},
   title={Addendum to: ``Countable algebra and set existence axioms'' [Ann.
   Pure Appl. Logic {\bf 25} (1983), no. 2, 141--181; MR0725732
   (85i:03157)]},
   journal={Ann. Pure Appl. Logic},
   volume={28},
   date={1985},
   number={3},
   pages={319--320},
   issn={0168-0072},
   review={\MR{790391 (87f:03141)}},
   doi={10.1016/0168-0072(85)90020-X},
}

\bib{hilbert}{article}{
  author={Hilbert, David},
  title={Die Theorie der algebraischen Zahlk\"{o}rper},
  journal={Jahresber. Deutsch. Math.-Verein},
  volume={4},
  date={1897},
  pages={175--546}
  }

\bib{realrep}{article}{
   author={Hirst, Jeffry L.},
   title={Representations of reals in reverse mathematics},
   journal={Bull. Pol. Acad. Sci. Math.},
   volume={55},
   date={2007},
   number={4},
   pages={303--316},
   issn={0239-7269},
   review={\MR{2369116 (2009j:03015)}},
   doi={10.4064/ba55-4-2},
}

\bib{hungerford}{book}{
author={Hungerford, Thomas},
title={Abstract Algebra},
series={Graduate Texts in Mathematics},
publisher={Springer Science+Business Media},
place={New York, NY},
date={1980},
pages={528},
isbn={978-0387905181},
}

\bib{js}{article}{
   author={Jockusch, Carl G., Jr.},
   author={Soare, Robert I.},
   title={$\Pi ^{0}_{1}$ classes and degrees of theories},
   journal={Trans. Amer. Math. Soc.},
   volume={173},
   date={1972},
   pages={33--56},
   issn={0002-9947},
   review={\MR{0316227 (47 \#4775)}},
}

\bib{kaplansky}{book}{
   author={Kaplansky, Irving},
   title={Fields and rings},
   series={Chicago Lectures in Mathematics},
   note={Reprint of the second (1972) edition},
   publisher={University of Chicago Press},
   place={Chicago, IL},
   date={1995},
   pages={x+206},
   isbn={0-226-42451-0},
   review={\MR{1324341 (96a:12001)}},
}

\bib{lang}{book}{
   author={Lang, Serge},
   title={Algebra},
   series={Graduate Texts in Mathematics},
   volume={211},
   edition={3},
   publisher={Springer-Verlag},
   place={New York},
   date={2002},
   pages={xvi+914},
   isbn={0-387-95385-X},
   review={\MR{1878556 (2003e:00003)}},
}

\bib{mn}{article}{
   author={Metakides, G.},
   author={Nerode, A.},
   title={Effective content of field theory},
   journal={Ann. Math. Logic},
   volume={17},
   date={1979},
   number={3},
   pages={289--320},
   issn={0003-4843},
   review={\MR{556895 (82b:03082)}},
   doi={10.1016/0003-4843(79)90011-1},
}

\bib{ms}{article}{
author={Miller, R.},
author={Shlapentokh, A.},
title={Computable categoricity for algebraic fields with spitting algorithms},
date={November 7, 2011},
note={Preprint, http://arxiv.org/pdf/1111.1205.pdf.}
}

\bib{remmel}{article}{
   author={Remmel, J. B.},
   title={Graph colorings and recursively bounded $\Pi^0_1$-classes},
   journal={Ann. Pure Appl. Logic},
   volume={32},
   date={1986},
   number={2},
   pages={185--194},
   issn={0168-0072},
   review={\MR{863333 (87m:03065)}},
   doi={10.1016/0168-0072(86)90051-5},
}

\bib{roth}{article}{
   author={Roth, R. L.},
   title={Classroom Notes: On Extensions of $Q$ by Square Roots},
   journal={Amer. Math. Monthly},
   volume={78},
   date={1971},
   number={4},
   pages={392--393},
   issn={0002-9890},
   review={\MR{1536291}},
   doi={10.2307/2316910},
}

\bib{simpson}{book}{
   author={Simpson, Stephen},
   title={Subsystems of second order arithmetic},
   series={Perspectives in Logic},
   edition={2},
   publisher={Cambridge University Press},
   place={Cambridge},
   date={2009},
   pages={xvi+444},
   isbn={978-0-521-88439-6},
review={\MR{2517689 (2010e:03073)}},
doi={10.1017/CBO9780511581007},
}

\bib{stoltuck}{article}{
   author={Stoltenberg-Hansen, V.},
   author={Tucker, J. V.},
   title={Computable rings and fields},
   conference={
      title={Handbook of computability theory},
   },
   book={
      series={Stud. Logic Found. Math.},
      volume={140},
      publisher={North-Holland},
      place={Amsterdam},
   },
   date={1999},
   pages={363--447},
   review={\MR{1720739 (2000g:03100)}},
   doi={10.1016/S0049-237X(99)80028-7},
}

\bib{vdw}{book}{
   author={van der Waerden, B. L.},
   title={Algebra. Vol. I},
   note={Based in part on lectures by E. Artin and E. Noether;
   Translated from the seventh German edition by Fred Blum and John R.
   Schulenberger},
   publisher={Springer-Verlag},
   place={New York},
   date={1991},
   pages={xiv+265},
   isbn={0-387-97424-5},
   review={\MR{1080172 (91h:00009a)}},
   doi={10.1007/978-1-4612-4420-2},
}

\bib{zariski}{book}{
   author={Zariski, Oscar},
   author={Samuel, Pierre},
   title={Commutative algebra. Vol. II},
   note={Reprint of the 1960 edition;
   Graduate Texts in Mathematics, Vol. 29},
   publisher={Springer-Verlag},
   place={New York},
   date={1975},
   pages={x+414},
   review={\MR{0389876 (52 \#10706)}},
}

\end{biblist}
\end{bibsection}

\end{document}